\documentclass[a4paper,12pt]{scrartcl}

\usepackage[utf8]{inputenc}
\usepackage[english]{babel}
\usepackage{amssymb}
\usepackage{amsmath}
\usepackage{latexsym}
\usepackage{amsthm}
\usepackage{eucal}
\usepackage{graphicx}
\usepackage{bbm}
\usepackage{verbatim}
\usepackage{epigraph}
\usepackage{mathtools}
\usepackage{authblk}
\usepackage[pdflatex]{crop}
\usepackage[bookmarks]{hyperref}
\usepackage{tikz}
\usepackage{tikz-cd}
\usepackage{wasysym}
\usepackage{adjustbox}
\usepackage{microtype}
\usepackage{chngcntr}
\allowdisplaybreaks
\usetikzlibrary{shapes.geometric,fit}

\setkomafont{disposition}{\normalfont\bfseries}
\setlength{\jot}{2ex}
\linespread{1.2} 

\newcommand{\auth}[0]{Tobias Fritz and Paolo Perrone}
\newcommand{\tit}[0]{Stochastic order on metric spaces and the ordered Kantorovich monad}
\newcommand{\kw}[0]{Stochastic order, Wasserstein distance, categorical probability, Giry monad, Kantorovich monad.}

\hypersetup{
pdfauthor={\auth},%
pdftitle={\tit},%
colorlinks, linktocpage=true, pdfstartpage=1, pdfstartview=FitV,%
breaklinks=true, pdfpagemode=UseNone, pageanchor=true, pdfpagemode=UseOutlines,%
plainpages=false, bookmarksnumbered, bookmarksopen=true, bookmarksopenlevel=1,%
hypertexnames=true, pdfhighlight=/O,%
urlcolor=black, linkcolor=black, citecolor=black, 
}
\pdfinfo{%
  /Title    (\tit)
  /Author   (\auth)
  /Creator  (\auth)
  /Subject  (Category Theory, Probability, Metric Geometry)
  /Keywords (\kw)
}


\numberwithin{equation}{section}

\theoremstyle{plain}
\newtheorem{thm}{Theorem}[subsection]
\newtheorem{lemma}[thm]{Lemma}
\newtheorem{prop}[thm]{Proposition}
\newtheorem{cor}[thm]{Corollary}

\newtheorem{deph}[thm]{Definition}
\newtheorem{prob}[thm]{Problem}
\theoremstyle{definition}
\newtheorem{remark}[thm]{Remark}
\newtheorem{eg}[thm]{Example}




\newcommand{\N}{\mathbb{N}}

\newcommand{\Q}{\mathbb{Q}}

\newcommand{\R}{\mathbb{R}}

\newcommand{\cat}[1]{{\mathsf{#1}}} 
\newcommand{\ar}[2][]{\arrow{#2}{#1}}
\newcommand{\uni}[2][]{\arrow[dashrightarrow]{#2}{#1}} 

\newcommand{\id}{\mathrm{id}} 



\newcommand{\supp}{\mathrm{supp}}

\DeclareMathOperator{\e}{\varepsilon}


\newcommand{\op}{\mathrm{op}}


\mathchardef\hy="2D
\DeclareMathOperator{\lomet}{\cat{L{\hy}OMet}}
\DeclareMathOperator{\lcomet}{\cat{L{\hy}COMet}}

\usepackage{todonotes}

\let\originalleft\left
\let\originalright\right
\renewcommand{\left}{\mathopen{}\mathclose\bgroup\originalleft}
\renewcommand{\right}{\aftergroup\egroup\originalright}


\tikzset{%
    bullet/.style={
       fill=black,
       circle,
       minimum width=1pt,
       inner sep=1pt
     },
     relation/.style={
       -,
       thick,
       shorten <=2pt,
       shorten >=2pt
     },
     function/.style={
       ->,
       thick,
       shorten <=2pt,
       shorten >=2pt
     },
     every fit/.style={
       ellipse,
       draw,
       inner sep=0pt
     }
}

\title{\vspace{-1cm}\tit}
\author[1]{Tobias Fritz\thanks{tfritz [at] pitp.ca}}
\affil[1]{\small Perimeter Institute for Theoretical Physics, Waterloo, ON (Canada)}
\author[2]{Paolo Perrone\thanks{pperrone [at] mit.edu}}
\affil[2]{Massachusetts Institute of Technology, Cambridge, MA (U.S.A.)}
\date{}

\begin{document}

\maketitle

\vspace{-1cm}

\begin{abstract}
\addcontentsline{toc}{section}{Abstract}
In earlier work, we had introduced the Kantorovich probability monad on complete metric spaces, extending a construction due to van Breugel. Here we extend the Kantorovich monad further to a certain class of \emph{ordered} metric spaces, by endowing the spaces of probability measures with the usual stochastic order. It can be considered a metric analogue of the probabilistic powerdomain. Our proof of antisymmetry of the stochastic order on these spaces is more general than previously known results in this direction.

The spaces we consider, which we call \emph{L-ordered}, are spaces where the order satisfies a mild compatibility condition with the \emph{metric itself}, rather than merely with the underlying topology. As we show, this is related to the theory of Lawvere metric spaces, in which the partial order structure is induced by the zero distances.
 
We show that the algebras of the ordered Kantorovich monad are the closed convex subsets of Banach spaces equipped with a closed positive cone, with algebra morphisms given by the short and monotone affine maps. Considering the category of L-ordered metric spaces as a locally posetal 2-category, the lax and oplax algebra morphisms are exactly the \emph{concave} and \emph{convex} short maps, respectively.
 
In the unordered case, we had identified the Wasserstein space as the colimit of the spaces of empirical distributions of finite sequences. We prove that this extends to the ordered setting as well by showing that the stochastic order arises by completing the order between the finite sequences, generalizing a recent result of Lawson. The proof holds on any metric space equipped with a closed partial order.

\end{abstract}

\tableofcontents

\newpage
\section{Introduction}\label{secintro}

The study of orders on spaces of probability measures induced by orders on the underlying space is of interest in many mathematical disciplines, and it is known under different names. In decision theory and in mathematical finance, one speaks of \emph{first-order stochastic dominance} of random variables~\cite{fishburn}. In probability theory, the common name is the \emph{usual stochastic order}~\cite{lehmann,stochastic-orders}. Most of the existing theory is specific to \emph{real-valued} random variables, where the order is an answer to the question of \emph{when a random variable is statistically larger than another one}. 
On a general ordered metric or topological space $X$, there are at least three ways to define such an order: given two probability measures $p,q$ on $X$,
\begin{enumerate}
 \item $p\le q$ if and only if $p$ assigns at most as much measure to any (say, closed) upper set as $q$ does;
 \item $p\le q$ if and only if there exists a coupling\footnote{Following the terminology of optimal transport~\cite{villani}, a \emph{coupling} of $p$ and $q$ is a probability measure on $X\times X$ such that its marginals are $p$ and $q$, respectively.} $r$ entirely supported on the set of ordered pairs, $\{(x,y)\in X\times X \mid x\le y\}$;
 \item $p\le q$ if and only if for all monotone functions $f:X\to\R$ of a certain class (for example, continuous), 
 \begin{equation*}
  \int f \, dp \le \int f \, dq .
 \end{equation*}
\end{enumerate}
A possible interpretation of the first condition is that \emph{the mass of the measure $p$ is overall placed lower in the order compared to $q$}. A possible interpretation of the second condition, in terms of optimal transport, is that there exists a transport plan from $p$ to $q$ such that mass moves at most upwards in the order. 
These two definitions are known to be equivalent for ordered Polish spaces by means of Strassen's theorem~\cite[Theorem~11]{strassen}, and for all ordered Hausdorff spaces by a result of Kellerer~\cite[Proposition~3.12]{kellerer}; see Section~\ref{ssecstord} for an example of how this can be applied. 
An interpretation of the third condition is that for any choice of utility function compatible with the order, the expected utility with measure $p$ will be at most the expected utility with measure $q$. The equivalence of this third definition with the other two has long been known in the literature for probability measures on $\R$. To the best of our knowledge, it was first stated for general completely regular topological spaces by Edwards~\cite{edwards} with monotone and bounded lower semi-continuous functions.

While it is easy to see that the stochastic order over any partially ordered space is reflexive and transitive, antisymmetry seems to be a long-standing question~\cite{lawson,hll}. For the case of \emph{L-ordered} metric spaces, which is a concept introduced in this paper, we will show that antisymmetry indeed holds in Section~\ref{ssecstord}. This widely generalizes the antisymmetry result of~\cite{hll}.\footnote{The first author has proved in follow-up work an even more general result, \cite{antisymmetry}.}

From the point of view of category theory, it was first shown by Giry~\cite{giry}---building on ideas of Lawvere~\cite{early}---that much of the structure of the space of probability measures on a given underlying space can be captured in terms of a \emph{monad}~\cite[Chapter~VI]{maclane}, which Giry called \emph{probability monad}. The first probability monad on a category of ordered spaces, namely continuous domains, was defined seven years later by Jones and Plotkin~\cite{jones-plotkin}, and called \emph{probabilistic powerdomain}.
In more recent years, Keimel~\cite{keimel} studied another probability monad for ordered spaces, the \emph{Radon monad} on compact ordered spaces. He gave a complete characterization of its algebras, proving that they are precisely the compact convex subsets of locally convex topological vector spaces, with the order specified by a closed positive cone. 

A categorical treatment of probability measures on (unordered) \emph{metric} spaces was initiated by van~Breugel~\cite{breugel} with the introduction of the \emph{Kantorovich monad}, a probability monad utilizing the Kantorovich-Wasserstein distance, and coming in one version for compact metric spaces and one for complete $1$-bounded metric spaces. This construction was extended by us to all complete metric spaces~\cite{ours_kantorovich}, and shown to arise in a natural way from finite constructions which involve no measure theory. In this paper, we extend our Kantorovich monad to partially ordered complete metric spaces. We show how to make the interpretation of the order in terms of ``moving the mass upward'' precise in terms of a colimit characterization of the order, generalizing a result of Lawson~\cite{lawson}. We also prove that the algebras for the ordered Kantorovich monad are exactly the closed convex subsets of Banach spaces, equipped with a closed positive cone. Moreover, we give a categorical characterization of \emph{convex maps} between ordered convex spaces as exactly the \emph{oplax} morphism of algebras. 

Ordered metric spaces are closely related to \emph{Lawvere metric spaces}~\cite{lawvere,seriously}, which are generalizations of metric spaces to asymmetric distances. Such objects already incorporate a partial order structure in terms of zero distances.
A treatment of probability monads on Lawvere metric spaces, and the related Kantorovich duality theory, has been initiated by Goubault-Larrecq~\cite{gl}. 
In this paper we work for the most part with ordinary metric spaces; however, the duality theory and the interplay between metric and order can be interpreted in terms of Lawvere distances, as we show in Appendix~\ref{secindlmet}.

\paragraph{Summary.} 
In Section~\ref{secomet} we define the relevant categories of ordered metric spaces. In~\ref{ssecstord} we give the definition of the usual stochastic order, and that of ordered Wasserstein spaces.

In Section~\ref{colimit_char}, we show that the ordered Wasserstein space satisfies a colimit characterization, thanks to a density result (Proposition~\ref{orderdensity}), in analogy with the colimit characterization of unordered Wasserstein spaces given in~\cite[Theorem~3.3.7]{ours_kantorovich}. In~\ref{ssecproperness}, we prove some useful technical results unrelated to the order structure, such as that the marginal map $\nabla : P(X\otimes Y) \to P(X) \otimes P(Y)$ is proper.

In Section~\ref{secLord} we define and study a particular class of ordered spaces, which we call \emph{L-ordered spaces}, in which the order is compatible with the metric in a particular way. In~\ref{sseckantLord} we show that this property allows us to express the stochastic order in terms of Kantorovich duality (Theorem~\ref{Ldual}), and in~\ref{ssecantis} we prove, using this Kantorovich duality, that the order is antisymmetric (Corollary~\ref{antisymmetry}).

In Section~\ref{secordp}, we introduce and study the monad structure on the functor $P$ assigning to every ordered complete metric space its ordered Wasserstein space, resulting in the \emph{ordered Kantorovich monad}. 
In~\ref{ssecordms} we prove (Theorem~\ref{bimonord}) that the formation of product distributions and marginals equips the ordered Kantorovich monad with a \emph{bimonoidal structure}, just like in the unordered case~\cite[Section~5]{ours_bimonoidal}.

In Section~\ref{secordalg} we prove that the algebras of the ordered Kantorovich monad are precisely the closed convex subsets of ordered Banach spaces (Theorem~\ref{poscone}). 
The structure maps, as in the unordered case, are given by integration, and in~\ref{strictlymonotone} we show that these maps are strictly monotone, fully generalizing a result that has long been known in the real-valued case (Proposition~\ref{epev}). In~\ref{ssechigher} we show that, if one considers the category of ordered metric spaces as a locally posetal 2-category, then the algebra adjunction of the monad $P$ can be strengthened to an isomorphism of partial orders, and the Hahn-Banach separation theorem can be phrased as stating that $\R$ is a 2-categorical coseparator in the 2-category of $P$-algebras (Definition~\ref{defcosep} and Corollary~\ref{cosepalg}).
In~\ref{ssecoplax} we show, again using the 2-categorical perspective, that the lax and oplax morphisms of algebras are precisely the concave and convex maps (Theorem~\ref{laxoplax}).

Appendix~\ref{appmlift} develops some general results on a property of short maps between metric spaces which we call the \emph{metric lifting property}. Besides the applications in \ref{ssecproperness}, we will have other uses for this machinery in upcoming work.

In Appendix~\ref{secindlmet} we show that the concept of L-ordered space introduced in Section~\ref{secLord} can be interpreted in terms of Lawvere metric spaces, and that the L-distance is part of an adjunction between ordered metric spaces and Lawvere metric spaces (Theorem~\ref{LadjS}).

\paragraph{Remark.}
An earlier version of this paper is part of the second author's PhD thesis, available at \href{http://paoloperrone.org/phdthesis.pdf}{http://paoloperrone.org/phdthesis.pdf}.

\section{Probabilities on ordered metric spaces}\label{secomet}

\subsection{Preliminaries}

\begin{deph}
 An \emph{ordered metric space} is a metric space $X$ equipped with a partial order relation whose graph $\{\le\}\subseteq X\times X$ is closed. 
\end{deph}

So if we have sequences $\{x_i\}$ and $\{y_i\}$ in $X$ converging to $x$ and $y$, respectively, and such that $x_i\le y_i$ for all $i$, then necessarily $x\le y$. In analogy with the monoidal category $\cat{Met}$ from~\cite[Section~2.1]{ours_kantorovich}, we put:

\begin{deph}
 The symmetric monoidal category $\cat{OMet}$ has:
 \begin{itemize}
  \item As objects, ordered metric spaces;
  \item As morphisms, monotone, short maps (also called non-expanding, or 1-Lipschitz), i.e.~functions $f:X\to Y$ such that for all $x,x'\in X$, 
  $$
  d\big( f(x),f(x') \big) \le d(x,x');
  $$
  \item As monoidal structure $\otimes$, the cartesian product $X\times Y$ equipped with the $\ell^1$-sum of the metrics
  $$
  d\big( (x,y) , (x',y') \big) = d(x,x') + d(y,y') ,
  $$
  the product order, and together with the obvious symmetric monoidal structure isomorphisms.
 \end{itemize}
\end{deph}

There exists a faithful and essentially surjective forgetful functor $U:\cat{OMet}\to\cat{Met}$ with a left adjoint (the discrete order).

We are mainly interested in \emph{complete} metric spaces. 

\begin{deph}
 The category $\cat{COMet}$ is the full subcategory of $\cat{OMet}$ whose objects are ordered metric spaces which are complete as metric spaces.   
\end{deph}

Our construction of the ordered Kantorovich monad will take place on $\cat{COMet}$, while some of our analytical results will hold on all of $\cat{OMet}$.

As in~\cite{ours_kantorovich}, we are interested in probability measures of finite first moment. If $X$ is a metric space, a Borel measure $p$ on $X$ has finite first moment if for every short map $f:X\to\R$, the integral
$$
\int f\,dp
$$
exists and is finite. See~\cite[Section~2.3]{ours_kantorovich} for more details on this notion.

\begin{deph}
 Let $X\in\cat{Met}$. We denote by $PX$ the set of Radon probability measures on $X$ of finite first moment.
\end{deph}

A central theme of this work is the celebrated \emph{Kantorovich duality}~\cite[Chapter~5]{villani}. The following formulation can be obtained from~\cite[Theorem~5.10]{villani} together with~\cite[Particular Case~5.4]{villani})

\begin{thm}[Kantorovich duality]\label{kantorovichduality}
 Let $X$ be a Polish space. Let $p$ and $q$ be Radon probability measures on $X$, and let  $c:X\otimes X\to\R_+$ be a lower-semicontinuous function satisfying the triangle inequality. Then we have an equality:
  \begin{equation}
  \inf_{r\in\Gamma(p,q)} \int_{X\times X} c(x,y) \,dr(x,y) = \sup_{f} \, \left( \int_X f dq - \int_X f \, dp  \right) ,
 \end{equation}
 where the infimum is taken over the space $\Gamma(p,q)$ of \emph{couplings} between $p$ and $q$, and where $f:X\to\R$ varies over functions which have finite integral with both measures $p$ and $q$, and such that $f(y)-f(x)\le c(x,y)$ for all $x,y\in X$.
\end{thm}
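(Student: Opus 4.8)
The plan is to obtain this formulation as a specialization of the general Kantorovich duality, so I would not reprove that theorem from scratch but rather deduce the single-function form from the two-function form. The general duality (Villani's Theorem~5.10) applies to any lower-semicontinuous nonnegative cost $c$ on a Polish space and asserts
$$
\inf_{r\in\Gamma(p,q)} \int_{X\times X} c\,dr \;=\; \sup_{(\phi,\psi)} \left( \int_X \psi\,dq - \int_X \phi\,dp \right),
$$
where the supremum runs over all pairs of integrable functions with $\psi(y)-\phi(x)\le c(x,y)$. The left-hand side already coincides with the primal problem in the statement, so the entire task reduces to showing that, when $c$ obeys the triangle inequality, the two-function dual collapses to the single-function dual over $\{f : f(y)-f(x)\le c(x,y)\}$.

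One inequality is immediate: every admissible $f$ yields an admissible pair $(\phi,\psi)=(f,f)$ with the same objective value (here using $c(x,x)=0$, which holds automatically when $c$ is a genuine distance), so the single-function supremum is at most the two-function supremum. For the reverse inequality I would start from an arbitrary admissible pair $(\phi,\psi)$ and pass to the $c$-transform $f(x):=\sup_{y}\big(\psi(y)-c(x,y)\big)$. Three points then need to be checked. First, replacing $\phi$ by $f$ preserves admissibility and can only increase the objective, since $f\le\phi$ pointwise and $\phi$ enters with a negative sign. Second, the triangle inequality $c(x',y)\le c(x',x)+c(x,y)$ forces $f(x)\le f(x')+c(x',x)$, that is, $f$ satisfies $f(y)-f(x)\le c(x,y)$; notably this step uses no symmetry of $c$, which is exactly what will let the result carry over to the asymmetric L-distances later in the paper. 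Third, $\psi\le f$ (again via $c(x,x)=0$), so replacing $\psi$ by $f$ further increases the objective and produces an admissible single function $(f,f)$ dominating the original pair. Taking suprema gives equality of the two dual problems, completing the identification.

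The main technical obstacle I anticipate is not the algebra of the $c$-transform but the measure-theoretic bookkeeping required to apply Villani's Theorem~5.10 and to keep the reduced problem inside the stated class of functions: one must confirm that $f$ is Borel measurable and lower semicontinuous, so that its integrals are well defined and the supremum is genuinely taken over the claimed class, and that $f$ has finite integral against both $p$ and $q$. The integrability follows from the Lipschitz-type bound $|f(x)-f(x')|\le\max\big(c(x,x'),c(x',x)\big)$ together with the finite-first-moment assumption on $p$ and $q$, but these estimates must be made with care when $c$ is merely lower semicontinuous rather than continuous. The cleanest route, and the one effectively used in Villani's Particular Case~5.4, is to track that $c$ vanishes on the diagonal and to exploit that the relevant costs in our applications are (scalar multiples of) honest metrics, where all of these regularity points are classical.
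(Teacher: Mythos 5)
The paper does not actually prove Theorem~\ref{kantorovichduality}; it obtains it by citing \cite[Theorem~5.10]{villani} together with \cite[Particular Case~5.4]{villani}. Your proposal is essentially the derivation hiding behind that citation: invoke the two-function duality of Theorem~5.10, then collapse it to the single-function form via the $c$-transform. In one respect you do better than the citation itself: Particular Case~5.4 concerns genuine (symmetric) distances, while the statement here allows asymmetric $c$ satisfying only the triangle inequality, and your explicit collapse argument --- which, as you correctly note, never uses symmetry --- is exactly what is needed in this generality. One point deserves emphasis: your step $\psi\le f$ uses $c(x,x)=0$, which is \emph{not} implied by the stated hypotheses (nonnegativity, lower semicontinuity, triangle inequality), and the theorem as printed is in fact false without it: take $c\equiv 1$, which does satisfy the triangle inequality, and $p=q=\delta_{x_0}$; then the left-hand side equals $1$ while the right-hand side equals $0$. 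This is a defect of the paper's statement rather than of your argument, and it is harmless downstream, since in every use the paper makes of the theorem the missing hypothesis holds: in Corollary~\ref{kantorovichduality2} the cost satisfies $c\le d$, forcing $c(x,x)\le d(x,x)=0$, and the L-distance $d_L$ vanishes on the diagonal by Proposition~\ref{dlproperties}. Your only slip is the closing remark that the relevant costs are ``honest metrics'': the cost that actually matters later is $d_L$, which is asymmetric and hence not a metric, but --- as your own argument establishes --- diagonal vanishing, not symmetry, is all that is required. Finally, the measurability and integrability bookkeeping you flag is genuine but classical; note that your construction yields the sandwich $\psi\le f\le\phi$ between integrable functions, which settles integrability once (universal) measurability of the $c$-transform is known, the latter being handled in Villani's treatment.
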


The more specific form of Kantorovich duality that we will use in this work is the following:

\begin{cor}\label{kantorovichduality2}
 Let $X$ be a complete metric space. Let $c:X\otimes X\to\R_+$ be a lower-semicontinuous function bounded above by the distance, and which satisfies the triangle inequality. Let $p,q\in PX$. Then there is an equality
 \begin{equation}
  \inf_{r\in\Gamma(p,q)} \int_{X\times X} c(x,y) \,dr(x,y) = \sup_{f} \, \left( \int_X f dq - \int_X f \, dp  \right) ,
 \end{equation}
 where $f:X\to\R$ varies over functions such that $f(y)-f(x)\le c(x,y)$ for all $x,y\in X$.
\end{cor}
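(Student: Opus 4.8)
The plan is to derive the corollary from Theorem~\ref{kantorovichduality} by removing two gaps: the corollary allows an arbitrary complete (not necessarily separable) space and drops the integrability proviso on the test functions $f$. I would dispatch the integrability issue first. Since $c$ is nonnegative and bounded above by the distance, any $f$ with $f(y)-f(x)\le c(x,y)$ for all $x,y$ satisfies, upon swapping $x$ and $y$, the bound $|f(x)-f(y)|\le d(x,y)$; that is, every admissible $f$ is short. By the very definition of $PX$ as measures of finite first moment, short functions are automatically $p$- and $q$-integrable with finite integral (concretely, $|f(x)|\le|f(x_0)|+d(x_0,x)$). Hence the family of $f$ ranged over in the corollary coincides exactly with the family in the theorem, and it suffices to establish the displayed equality under the additional standing hypothesis $0\le c\le d$.

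Next I would reduce to the Polish case. Because $p$ and $q$ are Radon, each is tight, hence concentrated on a $\sigma$-compact, and therefore separable, subset of full measure. Taking $Y$ to be the closure of the union of two such sets gives a closed separable subspace of the complete space $X$, which is thus Polish, and with $p(Y)=q(Y)=1$. Any coupling $r\in\Gamma(p,q)$ then has $r\big((X\setminus Y)\times X\big)=p(X\setminus Y)=0$ and similarly for the second factor, so $r$ is concentrated on $Y\times Y$. Consequently couplings of $p,q$ on $X$ correspond bijectively to couplings of their restrictions on $Y$, and $\int_{X\times X}c\,dr=\int_{Y\times Y}c\,dr$, so the infimum side of the identity is unchanged by passing from $X$ to $Y$.

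It remains to match the supremum sides across the inclusion $Y\hookrightarrow X$. Restricting an admissible $f$ on $X$ to $Y$ yields an admissible function on $Y$ with the same value of $\int f\,dq-\int f\,dp$ (the integrals only see $Y$), giving one inequality. For the reverse I would extend an admissible $g$ on $Y$ by the $c$-transform $f(x):=\inf_{y\in Y}\big(g(y)+c(y,x)\big)$: the triangle inequality for $c$ gives $f(x')-f(x)\le c(x,x')$ on all of $X$; the bound $c(y,y)\le d(y,y)=0$ together with the admissibility of $g$ forces $f|_Y=g$; and a second application of the triangle inequality bounds $f$ from below, so it is real-valued. Since $p,q$ live on $Y$, this extension has the same integral difference as $g$. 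Applying Theorem~\ref{kantorovichduality} on the Polish space $Y$ (whose own integrability proviso is again automatic, as above) then closes the argument. The main obstacle is precisely this extension step—checking that the $c$-transform of $g$ restricts back to $g$ and does not take the value $-\infty$—whereas the separability reduction and the automatic integrability of short functions are routine.
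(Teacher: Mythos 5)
Your proof is correct, and its skeleton---reduce to a closed separable (hence Polish) subspace carrying both measures, match the two sides of the duality across the inclusion, then invoke Theorem~\ref{kantorovichduality}---is the same as the paper's. The one genuine difference is the extension step, and it is in your favour. The paper takes $\tilde X$ to be the union of the two supports and extends test functions from $\tilde X$ to $X$ by the \emph{metric} formula $f'(x)=\sup_{y\in\tilde X}\bigl(f(y)-d(x,y)\bigr)$, which only guarantees that $f'$ is short; when $c$ is strictly smaller than $d$ somewhere, this extension need not satisfy the constraint $f'(y)-f'(x)\le c(x,y)$ on all of $X$ (one can build three-point examples where it fails), and that constraint is exactly what is needed to conclude $\sup_{\tilde X}\le\sup_X$, the direction not covered by restriction and the one that completes strong duality. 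Your $c$-transform $f(x)=\inf_{y\in Y}\bigl(g(y)+c(y,x)\bigr)$ does satisfy the $c$-constraint globally (by the triangle inequality for $c$), restricts to $g$ on $Y$ (using $c(y,y)=0$, which is forced by $0\le c\le d$), and is real-valued (a second application of the triangle inequality), so it closes precisely the gap that the paper's $d$-based extension leaves open. The remaining deviations are cosmetic: you obtain a separable full-measure set from tightness and $\sigma$-compactness where the paper quotes separability of supports of Radon measures, and both you and the paper record the routine observation that $c$-admissible functions are short and hence automatically integrable against measures of finite first moment, so that the integrability proviso of Theorem~\ref{kantorovichduality} is vacuous here.
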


\begin{proof}
	First of all, the support of the Radon probability measures $p$ and $q$ is separable (see for example~\cite[Theorem~II.2.1]{partha}). Denote now by $\tilde{X}$ the union of the supports of $p$ and $q$. As the union of two closed separable sets, it is closed and separable as well. Therefore it is also complete, and hence Polish.
 Moreover, the supremum is taken over maps $f$ such that $f(y)-f(x)\le c(x,y)\le d(x,y)$, i.e.~they are short. Short maps can always be extended from a closed subset to the whole space, e.g.~in the following way: given $f:\tilde{X}\to\R$, we define $f':X\to\R$ to be 
 \begin{equation*}
  f'(x) := \sup_{y\in\tilde{X}} \big( f(y) - d(x,y) \big).
 \end{equation*} 
 Therefore the supremum over such short maps $f:X\to\R$ can be equivalently taken over maps $f:\tilde{X}\to\R$. 
 We can then apply Theorem~\ref{kantorovichduality} to get:
 \begin{align*}
  &\inf_{r\in\Gamma(p,q)} \int_{X\times X} c(x,y) \,dr(x,y) = \inf_{r\in\Gamma(p,q)} \int_{\tilde{X}\times \tilde{X}} c(x,y) \,dr(x,y)\\
  &= \sup_{f:\tilde{X}\to\R} \left( \int_{\tilde{X}} f dq - \int_{\tilde{X}} f \, dp  \right) = \sup_{f:X\to\R} \left( \int_X f dq - \int_X f \, dp  \right) .
 \end{align*} 
 Since $p$ and $q$ have finite first moment, the integral of such short $f$ with both measures always exists. 
\end{proof}

We now equip the space $PX$ with the Kantorovich-Wasserstein distance, or \emph{earth mover's distance}. This is given either by
\begin{equation*}
 d(p,q) := \inf_{r\in\Gamma(p,q)} \int_{X\times X} d(x,y) \,dr(x,y) ,
\end{equation*}
or equivalently, using Corollary~\ref{kantorovichduality2}, by
\begin{equation*}
 d(p,q) := \sup_{f:X\to\R} \left( \int_X f \, dq - \int_X f\, dp  \right),
\end{equation*}
where the supremum is taken over all the short maps $X\to\R$. 

It is well-known that if $X$ is complete (resp.~separable, compact), then $PX$ with the metric above is complete (resp.~separable, compact) as well \cite{villani,hitch}. More details of how the space $PX$ is constructed from a categorical point of view can be found in~\cite{ours_kantorovich}.

\subsection{The stochastic order}\label{ssecstord}

\begin{deph}\label{defstochord}
 Let $X\in \cat{OMet}$. For any $p,q\in PX$, the \emph{stochastic order} relation $p\le q$ holds if and only if there exists a coupling of $p$ and $q$ entirely supported on the graph $\{\le\}\subseteq X\otimes X$. 
\end{deph}

More explicitly, a coupling $r$ of $p$ and $q$ is a probability measure $r$ on $X\times X$ whose marginals are $p$ and $q$ respectively. The measure $r$ is supported on $\{\le\}\subseteq X\otimes X$ if and only if for every $x,x'$ such that $x\nleq x'$ and every open neighborhoods $U$ of $x$ and $U'$ of $x'$, we have that $r(U\times U')=0$.
This is a standard notion, see for example \cite{hll}. A possible interpretation, as sketched in the introduction, is that the mass of $p$ can be moved so as to form the distribution $q$ in a way such that every unit of mass is only moved upwards in the order (or not at all).

 As sketched in the introduction, the stochastic order can be defined in several equivalent ways. The following equivalence result is a special case of~\cite[Proposition~3.12]{kellerer}, which holds even for arbitrary topological spaces equipped with a closed partial order\footnote{Such a space is automatically Hausdorff~\cite[Proposition~2]{nachbin}.}.

\begin{thm}[Kellerer]\label{kellererthm}
 Let $X\in \cat{OMet}$, and let $p,q\in PX$. Then $p\le q$ if and only if $p(C) \le q(C)$ for every closed upper set $C\subseteq X$.
\end{thm}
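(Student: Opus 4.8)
The plan is to prove the two implications separately: the ``only if'' direction is elementary, while the ``if'' direction is the Strassen--Kellerer content of the statement.

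For the easy direction, suppose $r\in\Gamma(p,q)$ is supported on $\{\le\}$ and let $C\subseteq X$ be a closed upper set. Since $r$ is concentrated on $\{\le\}$ we have $p(C)=r(C\times X)=r\big((C\times X)\cap\{\le\}\big)$. Any $(x,y)$ in this set has $x\in C$ and $x\le y$, so $y\in C$ because $C$ is an upper set; hence $(C\times X)\cap\{\le\}\subseteq X\times C$ and therefore $p(C)\le r(X\times C)=q(C)$.

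For the converse, assume $p(C)\le q(C)$ for every closed upper set $C$ and let us produce a coupling supported on $\{\le\}$. As in the proof of Corollary~\ref{kantorovichduality2} we may restrict to $\tilde X:=\cl(\supp p\cup\supp q)$ and so assume $X$ is separable (Polish after passing to the completion). The hypothesis transfers to $\tilde X$: for a closed upper set $C'\subseteq\tilde X$ and any compact $K\subseteq C'$, the up-closure $\mathord{\uparrow} K$ is a \emph{closed} upper set of $X$ (here we use that the graph $\{\le\}$ is closed, so that limits of points above the compact $K$ again lie above $K$), whence $p(K)\le p(\mathord{\uparrow}K)\le q(\mathord{\uparrow}K)\le q(C')$; taking the supremum over $K$ and using inner regularity gives $p(C')\le q(C')$. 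Now the complement $\{\nleq\}\subseteq X\otimes X$ is open, so $c:=\mathbbm{1}_{\{\nleq\}}$ is a nonnegative lower semicontinuous cost. The set $\Gamma(p,q)$ is tight, hence weakly compact, and $r\mapsto\int c\,dr$ is weakly lower semicontinuous, so the primal transport problem for $c$ is attained. By the general form of Kantorovich duality for lower semicontinuous costs~\cite[Theorem~5.10]{villani}, it therefore suffices to prove
\begin{equation*}
 \inf_{r\in\Gamma(p,q)}\int_{X\times X}\mathbbm{1}_{\{\nleq\}}\,dr=\sup_{(\phi,\psi)}\left(\int_X\phi\,dp+\int_X\psi\,dq\right)=0,
\end{equation*}
the supremum ranging over integrable pairs with $\phi(x)+\psi(y)\le\mathbbm{1}_{\{\nleq\}}(x,y)$. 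Indeed, if this common value is $0$, an optimal coupling $r_*$ satisfies $r_*(\{\nleq\})=0$, and since $\{\nleq\}$ is open this means $r_*$ is supported on the closed set $\{\le\}$, as required.

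It remains to show the dual supremum equals $0$. The pair $\phi=\psi=0$ is feasible, so the supremum is $\ge 0$, and the real work is the reverse inequality. The hypothesis enters through the scaled indicators of upper sets: for a closed upper set $C$ and $t\in[0,1]$ the pair $\phi=t\,\mathbbm{1}_C$, $\psi=-t\,\mathbbm{1}_C$ is feasible---using $\mathbbm{1}_C(x)\le\mathbbm{1}_C(y)$ whenever $x\le y$, and $t\le 1$ otherwise---with objective $t\big(p(C)-q(C)\big)\le 0$. The main obstacle is to show that these essentially exhaust the dual feasible region, i.e.\ that an optimal dual pair for the $\{0,1\}$-valued cost $c$ may be taken of this form, so that the hypothesis forces the supremum to be $\le 0$. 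This is precisely the extremal analysis at the heart of Strassen's theorem; in the compact case it can alternatively be carried out by finite discretization together with the max-flow--min-cut (Hall) theorem and a weak limit. As the statement is a special case of~\cite[Proposition~3.12]{kellerer}, one may also simply invoke that result.
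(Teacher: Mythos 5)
The paper does not actually prove Theorem~\ref{kellererthm}: it is imported as a special case of Kellerer's result, with the text explicitly saying it ``is a special case of~\cite[Proposition~3.12]{kellerer}'' and no proof environment following. Since your bottom line is also ``one may simply invoke that result,'' your proposal is, in the end, no less (and no more) complete than the paper's own treatment, and your argument for the ``only if'' direction is correct (the paper leaves even that implicit).

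However, the duality route you develop for the converse should not be mistaken for an independent proof. The step you yourself flag as ``the main obstacle''---showing that the dual supremum for the cost $c=\mathbbm{1}_{\{\nleq\}}$ is already controlled by the pairs $(t\,\mathbbm{1}_C,\,-t\,\mathbbm{1}_C)$ with $C$ a closed upper set---is precisely the content of Strassen's theorem, and it is the only genuinely hard part of the statement; omitting it means the argument reduces the theorem essentially to itself. Everything surrounding that gap is sound: the restriction to a separable support, the transfer of the hypothesis via up-closures $\mathord{\uparrow}K$ of compacta (correct, using closedness of $\{\le\}$ and inner regularity, plus the implicit fact that $q$ is concentrated on $\tilde X$), the attainment of the primal, and the feasibility check for the indicator pairs. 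Two smaller technical cautions: first, passing to the completion needs care, since the closure of $\{\le\}$ in the completion need not remain a partial order (it is only a closed relation, which fortunately is all the primal/dual setup requires, but the upper sets in question must still be upper sets of $X$ itself, as in your $\mathord{\uparrow}K$ step); second, you correctly avoid Corollary~\ref{kantorovichduality2}, which requires the cost to be dominated by the metric---$\mathbbm{1}_{\{\nleq\}}$ is not, even though it does satisfy the triangle inequality---and appeal to the general two-function duality of~\cite{villani} instead. So: read as a self-contained proof, your proposal has a genuine gap at exactly the crux; read as a reduction to~\cite{kellerer}, it is correct and coincides with the paper, in which case the duality scaffolding is dispensable.
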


In contrast to Definition~\ref{defstochord}, transitivity of the order relation is immediate from this alternative characterization.

Upon applying Theorem~\ref{kellererthm} to the order itself and then again to the opposite order, it also follows that $p \leq q$ holds if and only if $p(U) \leq q(U)$ for all \emph{open} upper sets $U$.

\section{Colimit characterization}
\label{colimit_char}

It is well-known that finitely supported measures are dense in the Wasserstein space~\cite{hitch,villani}, and therefore also empirical distributions of finite sequences. A possible interpretation is that the Kantorovich distance is a limit distance given by optimal transport of smaller and smaller finite partitions of the amount of mass to move. 
In~\cite{ours_kantorovich}, we used this type of reasoning to construct both the functor $P$ and its monad structure purely combinatorially without resort to any measure theory. (The measure only enters in proving the equivalence with the standard notion of probability measure.) Here, we prove that the \emph{order} structure of $PX$ also arises in this way, as the closure of the order between the finite empirical sequences. In other words, we prove the following alternative characterization of the stochastic order, generalizing~\cite[Theorem~4.8]{lawson}: 
$p\le q$ if and only if $p$ and $q$ can be approximated arbitrarily well by empirical distributions of finite sequences $\{x_i\}$ and $\{y_i\}$, such that up to permutation, $x_i\le y_i$ for all $i$, meaning that to obtain $q$ from $p$, each unit of mass must be moved upward in the order. 

We construct the spaces of finite sequences in a functorial way in~\ref{ssecpowers}. Then we define the empirical distribution map as a natural transformation in~\ref{ssecempdist}, and prove the above order density result in~\eqref{ssecorddens}. The constructions are analogous to those in~\cite{ours_kantorovich}, to which we refer for a more detailed treatment of the metric notions.

\subsection{Power functors}\label{ssecpowers}

The power functors for the Kantorovich monad on $\cat{CMet}$ were introduced and studied in~\cite{ours_kantorovich}. Here, we generalize some of this treatment to $\cat{COMet}$.

\begin{deph}
 Let $X\in\cat{OMet}$ and $N$ be a finite set. We denote by $X^N$ the \emph{$N$-fold power} of $X$, defined as follows:
 \begin{itemize}
  \item Its elements are functions $N\to X$, or equivalently tuples $(x_n)_{n\in N}$ of elements of $X$ indexed by elements of $N$;
  \item Its metric is defined to be:
  \begin{equation}
   d \big( (x_n)_{n\in N}, (y_n)_{n\in N} \big) := \dfrac{1}{|N|} \sum_{n\in N} d(x_n,y_n) ;
  \end{equation}
  \item Its order is the product order: $(x_n) \le (y_n)$ if and only if $x_n \le y_n$ for all $n\in N$.
 \end{itemize}
\end{deph}

This assignment is functorial in $X$. For this construction to be useful, we want to take a limit for $N$ large, or rather a colimit. So we need to make the powers functorial in $N$ as well, using the appropriate indexing category. To this end, define the monoidal category $\cat{FinUnif}$ as
\begin{itemize}
 \item Objects are nonempty finite sets;
 \item Morphisms are functions whose fibers all have the same cardinality (so in particular they are all surjective);
 \item The monoidal product is the cartesian product of finite sets. 
\end{itemize}
More details about this category can be found in \cite{ours_kantorovich}. Since this category is equivalent to a small category, we need not worry about size issues.

Given $X\in\cat{OMet}$, the powers $X^-$ form a functor $\cat{FinUnif}^\op\to\cat{OMet}$. In fact, consider a map $\phi: M\to N$ with fibers of uniform cardinality. Define the map $X^\phi:X^N\to X^M$ given by composition with $\phi$,
\begin{equation}
X^\phi(x_n)_{n\in N} := (x_{\phi(m)})_{m\in M}.
\end{equation}

\begin{prop}
$X^\phi$ is an isometric order embedding. 
\end{prop}

\begin{proof}
We know from \cite[Lemma 3.1.3]{ours_kantorovich} that $X^\phi$ is an isometric embedding. 
For the order part, first of all, $(x_{\phi(m)})_{m\in M} \le (y_{\phi(m)})_{m\in M}$ if an only if for all $m\in M$, $x_{\phi(m)}\le y_{\phi(m)}$. Since $\phi$ is surjective, this is equivalent to $x_{n}\le y_{n}$ for all $n\in N$, which in turn means exactly that $(x_n)_{n\in N}\le (y_n)_{n\in N}$.
\end{proof}

Since all these constructions are natural, we then have a functor $(-)^{(-)}:\cat{FinUnif}^\op\otimes\cat{OMet}\to\cat{OMet}$, or by currying, we consider equivalently the functor $(-)^{(-)}:\cat{FinUnif}^\op\to[\cat{OMet},\cat{OMet}]$. 
The curried functor is strongly monoidal, where the monoidal structure of the functor category $[\cat{OMet},\cat{OMet}]$ is given by functor composition. 
If we restrict to \emph{complete} ordered metric spaces, the powers $X^N$ are complete as metric spaces as well, and we get a strong monoidal functor $(-)^{(-)}:\cat{FinUnif}^\op\to[\cat{COMet},\cat{COMet}]$.

\subsection{Empirical distribution}\label{ssecempdist}

\begin{deph}
 Let $X\in\cat{OMet}$. Then a tuple $(x_n)_{n\in N} \in X^N$ induces an element of $PX$: the \emph{empirical distribution}
\begin{equation}
 \dfrac{1}{|N|} \sum_{n\in N} \delta_{x_n} .
\end{equation}
\end{deph}

This assignment forms a short, monotone map $i_N:X^N\to PX$, natural in $X$ and in $N$.

Forming the empirical distribution is not an isometric or order embedding. However, it is one up to permutation, as the following results show.

\begin{lemma}[Splitting Lemma]\label{splitting}
 Let $X\in \cat{OMet}$. Let $(x_n)\in X^N$ and $(y_m)\in X^M$. Then $i_N(x_n) \le i_M(y_m)$ if and only if there exist a set $K$ and maps $\phi:K\to N$ and $\psi:K\to M$ in $\cat{FinUnif}$ such that $X^\phi(x_n) \le X^\psi (y_m)$.
\end{lemma}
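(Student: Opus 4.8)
The plan is to prove both implications, with essentially all of the content residing in the forward direction.

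For the easy direction ($\Leftarrow$), suppose we are given $K$ together with $\phi\colon K\to N$ and $\psi\colon K\to M$ in $\cat{FinUnif}$ satisfying $X^\phi(x_n)\le X^\psi(y_m)$, i.e.\ $x_{\phi(k)}\le y_{\psi(k)}$ for every $k\in K$. I would exhibit the coupling $r := \frac{1}{|K|}\sum_{k\in K}\delta_{(x_{\phi(k)},\,y_{\psi(k)})}$. It is supported on the graph $\{\le\}$ because each summand sits at an ordered pair. Its first marginal assigns to a point $a$ the mass $\frac{1}{|K|}|\{k:x_{\phi(k)}=a\}|$, which equals $\frac{1}{|N|}|\{n:x_n=a\}|$ because $\phi$ has uniform fibers of size $|K|/|N|$; hence the first marginal is $i_N(x_n)$, and symmetrically the second is $i_M(y_m)$. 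Thus $i_N(x_n)\le i_M(y_m)$. (Equivalently, this is the monotonicity of $i_K$ combined with the naturality identities $i_K\circ X^\phi = i_N$ and $i_K\circ X^\psi = i_M$.)

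For the forward direction ($\Rightarrow$), the plan proceeds in four steps. First, I would pass from the measure-theoretic coupling to a finite transport matrix on the index sets. Since the marginals $i_N(x_n)$ and $i_M(y_m)$ are finitely supported, any coupling $r$ witnessing $i_N(x_n)\le i_M(y_m)$ is supported on the finite set $\supp i_N(x_n)\times\supp i_M(y_m)$, and being supported on $\{\le\}$ forces $r(\{(a,b)\})>0$ only when $a\le b$. Writing $A_a=\{n:x_n=a\}$ and $B_b=\{m:y_m=b\}$, I would disaggregate the mass to the index level by the explicit assignment $s_{nm} := r(\{(x_n,y_m)\})/(|A_{x_n}|\,|B_{y_m}|)$. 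A direct computation using the marginal identities $\sum_b r(\{(a,b)\}) = |A_a|/|N|$ and $\sum_a r(\{(a,b)\}) = |B_b|/|M|$ shows that $s$ has uniform row sums $1/|N|$, uniform column sums $1/|M|$, and that $s_{nm}>0$ forces $x_n\le y_m$. Second, I would upgrade $s$ to a \emph{rational} matrix: the set of matrices $(s_{nm})_{n\in N,\,m\in M}$ with $s_{nm}\ge 0$, $\sum_m s_{nm}=1/|N|$, $\sum_n s_{nm}=1/|M|$, and $s_{nm}=0$ whenever $x_n\nleq y_m$, is a bounded polytope cut out by (in)equalities with rational coefficients; it is nonempty by Step~1, hence contains a rational point, which I take in place of $s$. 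Third, I clear denominators: choosing a positive integer $D$ that is simultaneously a common denominator of the entries and a multiple of both $|N|$ and $|M|$, the matrix $c_{nm}:=D\,s_{nm}$ has entries in $\N$, constant row sums $D/|N|$ and constant column sums $D/|M|$, and remains supported on ordered index pairs.

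Fourth, I would read off $K$, $\phi$, $\psi$ from $c$. Set $K := \{(n,m,i): n\in N,\ m\in M,\ 1\le i\le c_{nm}\}$, which is nonempty with $|K|=D$, and define $\phi(n,m,i):=n$ and $\psi(n,m,i):=m$. The fiber of $\phi$ over any $n$ has cardinality $\sum_m c_{nm}=D/|N|$ and that of $\psi$ over any $m$ has cardinality $\sum_n c_{nm}=D/|M|$, so both are uniform-fiber (in particular surjective) maps, i.e.\ morphisms of $\cat{FinUnif}$. Finally, for every $k=(n,m,i)\in K$ the entry $c_{nm}$ is positive, hence $x_n\le y_m$, that is $x_{\phi(k)}\le y_{\psi(k)}$; this is exactly $X^\phi(x_n)\le X^\psi(y_m)$, as required.

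The main obstacle is the passage from a real coupling to the integer data of Step~3. Repeated values among the $x_n$ or the $y_m$ mean that $r$ does not canonically determine an index-level transport plan, and the given coupling need not have rational masses. Steps~1 and~2 are designed precisely to overcome these two difficulties — the explicit disaggregation formula handles multiplicities while preserving the support and marginal conditions, and the rationality of the transportation polytope removes the irrationality — after which the combinatorial construction of Step~4 is straightforward bookkeeping.
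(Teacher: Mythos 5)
Your proof is correct, but the route differs from the paper's in one essential place. The paper obtains the index-level transport matrix by citing the Splitting Lemma for simple valuations from domain theory (Gierz et al., Proposition IV-9.18), which requires a footnoted reconciliation between the stochastic order used there (via open upper sets) and the coupling-based Definition~\ref{defstochord}; implicitly this leans on Kellerer's theorem. You instead derive the matrix directly from the coupling definition: since both empirical distributions are finitely supported, any witnessing coupling is a finite atomic measure on $\supp i_N(x_n)\times\supp i_M(y_m)$, and your explicit disaggregation $s_{nm} = r(\{(x_n,y_m)\})/(|A_{x_n}|\,|B_{y_m}|)$ turns it into an index-level matrix with uniform row sums $1/|N|$, column sums $1/|M|$, and support on ordered pairs, correctly handling repeated values among the $x_n$ and $y_m$. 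From that point on the two arguments coincide: pass to a rational point of the transportation polytope (the paper's Farkas-lemma footnote is exactly your Step~2), clear denominators, and build $K$ with the two projections as the $\cat{FinUnif}$ morphisms. Your version is more self-contained and elementary --- it needs no external valuation-theoretic input and no order-equivalence caveat --- at the cost of a somewhat longer computation; the paper's version is shorter but outsources the key step. Your explicit treatment of the easy direction (the coupling $\frac{1}{|K|}\sum_k \delta_{(x_{\phi(k)},y_{\psi(k)})}$, or equivalently monotonicity of $i_K$ plus naturality) is also fine and is essentially what the paper leaves implicit.
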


\begin{proof}
 The homonymous statement in \cite[Proposition IV-9.18]{continuous} implies\footnote{The stochastic order considered there coincides with ours if one takes the topology on $X$ to be given by the open upper sets of $X$.} in particular that for two finitely supported measures (``simple valuations'') $\zeta=\sum_n r_n \delta_{x_n}$ and $\xi=\sum_m s_m \delta_{y_m}$, we have $\zeta\le \xi$ if and only if there exists a matrix of entries $t_{n,m}\in[0,\infty)$ such that:
 \begin{enumerate}
  \item\label{ptsc} $t_{n,m}> 0$ only if $x_n\le y_m$;
  \item\label{rowsc} $\sum_m t_{n,m} = r_n$;
  \item\label{colsc} $\sum_n t_{n,m} \le s_m$.
 \end{enumerate}
 In our case, $\zeta := i_N(x_n)$ and $\xi := i_M(y_m)$ are normalized, so condition \ref{colsc} can be strengthened to an equality. Since all $r_n$ and $s_m$ are rational, the $t_{n,m}$ can also be chosen to be rational if they exist\footnote{Recall that if a finite system of linear inequalities with rational coefficients has a real solution, then it also has a rational solution. One way to see this is to note that the set of solutions is a convex polyhedron defined by linear inequalities, and use the fact that Farkas' lemma holds both over $\R$ and over $\Q$.}. By finiteness, we can find a common denominator $d$ for all its entries, so that the matrix $(t_{n,m})$ can be written as the empirical distribution of an element of $X^{M\otimes N\otimes D}$, where $|D|=d$. Therefore we can fix $K=M\otimes N\otimes D$. Conditions \ref{rowsc} and \ref{colsc} together with naturality of the empirical distribution imply that we can find the desired maps $\phi$ and $\psi$, and condition \ref{ptsc} then says that $X^\phi(x_n) \le X^\psi (y_m)$. 
\end{proof}

\begin{cor}\label{splittingcor}
 Let $X\in \cat{OMet}$. Let $(x_n),(y_n)\in X^N$. Then $i_N(x_n) \le i_N(y_n)$ if and only if there exists a permutation $\sigma:N\to N$ such that for each $n\in N$, $x_n \le y_{\sigma(n)}$.
\end{cor}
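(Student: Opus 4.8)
The plan is to obtain both implications from the Splitting Lemma (Lemma~\ref{splitting}) specialized to $M = N$. The forward direction is essentially free: if $\sigma\colon N\to N$ is a permutation with $x_n\le y_{\sigma(n)}$ for all $n$, then $\sigma$ is a morphism of $\cat{FinUnif}$ (all its fibers are singletons), so taking $K=N$, $\phi=\id_N$ and $\psi=\sigma$ in Lemma~\ref{splitting} immediately yields $i_N(x_n)\le i_N(y_n)$. Even more directly, I could note that the empirical distribution is invariant under permuting its arguments, so $i_N(y_{\sigma(n)})=i_N(y_n)$, and since $i_N$ is monotone and $(x_n)\le(y_{\sigma(n)})$ in $X^N$ by hypothesis, monotonicity gives the conclusion without any appeal to the Splitting Lemma.

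The substance is in the converse. Assuming $i_N(x_n)\le i_N(y_n)$, I would invoke Lemma~\ref{splitting} to produce a finite set $K$ and morphisms $\phi,\psi\colon K\to N$ in $\cat{FinUnif}$ with $X^\phi(x_n)\le X^\psi(y_n)$, i.e.\ $x_{\phi(k)}\le y_{\psi(k)}$ for every $k\in K$. Membership in $\cat{FinUnif}$ forces all fibers of $\phi$, and all fibers of $\psi$, to share the common cardinality $a:=|K|/|N|$. I would then set up the bipartite graph between two copies of $N$ whose edges are the pairs $(i,j)$ with $x_i\le y_j$, and verify Hall's condition. Writing $R(S)$ for the neighbourhood of $S\subseteq N$, the inclusion $\psi(\phi^{-1}(S))\subseteq R(S)$ holds because each $k\in\phi^{-1}(S)$ satisfies $x_{\phi(k)}\le y_{\psi(k)}$ with $\phi(k)\in S$. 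Counting with uniform fibers then gives
\[
a\,|S| \;=\; |\phi^{-1}(S)| \;\le\; \bigl|\psi^{-1}\bigl(\psi(\phi^{-1}(S))\bigr)\bigr| \;=\; a\,\bigl|\psi(\phi^{-1}(S))\bigr| \;\le\; a\,|R(S)|,
\]
so $|R(S)|\ge|S|$ for all $S$. Hall's marriage theorem then supplies a perfect matching, i.e.\ a permutation $\sigma\colon N\to N$ with $x_n\le y_{\sigma(n)}$ for all $n$.

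I expect the only real obstacle to be this converse, whose content is the passage from the ``blown-up'' witness $(K,\phi,\psi)$ --- an order-compatible transport recorded at the level of a possibly large index set --- back to an honest permutation of $N$. The crucial point that makes the fiber-counting close up is precisely that both $\phi$ and $\psi$ have the \emph{same} uniform fiber size $a$, which is exactly what the definition of $\cat{FinUnif}$ guarantees. As an alternative to Hall's theorem, I could instead normalize the matrix $(t_{n,m})$ appearing in the proof of Lemma~\ref{splitting}: since both marginals are uniform, $|N|\,(t_{n,m})$ is doubly stochastic, so by Birkhoff--von Neumann it is a convex combination of permutation matrices, each necessarily supported within $\{(n,m):x_n\le y_m\}$; any permutation occurring in that decomposition then serves as the required $\sigma$.
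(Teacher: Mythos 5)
Your proof is correct, but your primary argument takes a genuinely different route from the paper's. The paper disposes of the converse by reaching into the \emph{proof} of Lemma~\ref{splitting}: for $M=N$ both marginals are uniform, so the matrix $(t_{n,m})$ constructed there is bistochastic, Birkhoff--von Neumann writes it as a convex combination of permutation matrices, and any permutation appearing in that combination works by the support condition --- which is exactly the alternative you sketch in your closing sentences. Your main argument instead uses only the \emph{statement} of Lemma~\ref{splitting}: from the witness $(K,\phi,\psi)$ you check Hall's condition via the count $a\,|S| = |\phi^{-1}(S)| \le |\psi^{-1}(\psi(\phi^{-1}(S)))| = a\,|\psi(\phi^{-1}(S))| \le a\,|R(S)|$, the crucial input being, as you say, that $\cat{FinUnif}$ forces $\phi$ and $\psi$ to share the common fiber size $a = |K|/|N|$; Hall's marriage theorem then yields the permutation. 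The comparison: your route treats the Splitting Lemma as a black box, so it is logically cleaner and would survive any other proof of that lemma, whereas the paper's route is shorter but is really a remark about an object internal to the lemma's proof rather than a deduction from its statement; the two tools are close cousins in any case, since Birkhoff--von Neumann is standardly deduced from Hall's theorem. Your forward direction is also fine in both versions, the direct one (permutation invariance of $i_N$ together with its monotonicity) being the simplest.
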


\begin{proof}
The ``if'' direction is clear. For ``only if'', we assume $i_N(x_n) \le i_N(y_n)$. Then the matrix $(t_{n,m})$ constructed as in the proof of Lemma~\ref{splitting} is bistochastic, and therefore a convex combination of permutations by the Birkhoff--von Neumann theorem. Choosing any permutation which appears in such a convex combination works, thanks to property~\ref{ptsc}.
\end{proof}

Moreover, as stated in \cite[Proposition 3.2.3]{ours_kantorovich}, we have an analogous statement for the metric:
\begin{equation}\label{permutationmetric}
 d\big( i_N(x_n),i_N(y_n)\big) = \min_{\sigma\in S_N} d\big( (x_n),(y_{\sigma(n)}) \big) .
\end{equation}

\subsection{Order density}\label{ssecorddens}

In \cite{ours_kantorovich} we proved that for a complete metric space $X$, the complete metric space $PX$ is the colimit of the $X^N$ for $N \in \cat{FinUnif}^\op$, taken in the category $\cat{CMet}$ of complete metric spaces. We now prove the analogous statement in the ordered case.

We start with a density result, which still holds for general ordered metric spaces. 

\begin{prop}\label{orderdensity}
 Let $X\in \cat{OMet}$ and $p\le q$ in $PX$. Then there exists a sequence $\{N_j\}_{j\in\N}$ in $\cat{FinUnif}$, and $\{\bar{p}_j\},\{\bar{q}_j\}$ such that:
 \begin{itemize}
  \item $\bar{p}_j,\bar{q}_j\in X^{N_j}$ for all $j$;
  \item $i(\bar{p}_j)\to p$ and $i(\bar{q}_j)\to q$ in $PX$;
  \item $\bar{p}_j\le \bar{q}_j$ in the order of $X^{N_j}$ for all $j$.
 \end{itemize}
\end{prop}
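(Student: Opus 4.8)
The plan is to realize the ordered density statement as a consequence of the ordinary (unordered) density of empirical distributions, applied not on $X\otimes X$ itself but on the closed subspace of ordered pairs. Since $p\le q$, Definition~\ref{defstochord} provides a coupling $r$ of $p$ and $q$ whose support is contained in the closed set $S:=\{\le\}\subseteq X\otimes X$. First I would check that $r$ genuinely lies in $P(X\otimes X)$, i.e.\ has finite first moment: fixing base points $x_0,y_0$ and using that the metric on $X\otimes X$ is the $\ell^1$-sum, the first moment of $r$ splits as $\int d(x,x_0)\,dr + \int d(y,y_0)\,dr = \int d(\cdot,x_0)\,dp + \int d(\cdot,y_0)\,dq$, which is finite because $p,q\in PX$.

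Next I would approximate $r$ by empirical distributions that remain supported on $S$. Regarding $S$ as a metric space with the restricted metric, $r$ defines an element of $PS$, and empirical distributions of finite sequences valued in $S$ are dense in $PS$ by the unordered density result of~\cite{ours_kantorovich}. Choose such a sequence $i_{N_j}(c_j)\to r$ with $c_j=\big((a_n,b_n)\big)_{n\in N_j}\in S^{N_j}$. Because the inclusion $S\hookrightarrow X\otimes X$ is isometric, the induced map $PS\to P(X\otimes X)$ is isometric as well: any coupling of two measures supported on $S$ is automatically supported on $S\times S$, where the ambient and restricted metrics agree. Hence $r_j:=i_{N_j}(c_j)\to r$ in $P(X\otimes X)$.

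Now I would read off the desired sequences. Each point $(a_n,b_n)$ lies in $S$, i.e.\ $a_n\le b_n$, so setting $\bar{p}_j:=(a_n)_{n\in N_j}$ and $\bar{q}_j:=(b_n)_{n\in N_j}$ in $X^{N_j}$ yields $\bar{p}_j\le\bar{q}_j$ in the product order, which is the third bullet. For the convergence, note that the two coordinate projections $X\otimes X\to X$ are short, hence induce short maps $P(X\otimes X)\to PX$; these send $r_j$ to its marginals $i(\bar{p}_j)$ and $i(\bar{q}_j)$, and send $r$ to $p$ and $q$. Shortness then gives $d\big(i(\bar{p}_j),p\big)\le d(r_j,r)\to 0$ and likewise $i(\bar{q}_j)\to q$, establishing the first two bullets.

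The only genuinely delicate point is the second step: ensuring the approximants can be taken supported on $S$ while still converging to $r$. Passing to the subspace $S$ and invoking the isometry $PS\hookrightarrow P(X\otimes X)$ is exactly what makes this automatic, whereas approximating on $X\otimes X$ directly and then correcting the support would be considerably more awkward. Everything else—the finite first moment of $r$, the isometric embedding, and the marginal convergence via shortness—is routine.
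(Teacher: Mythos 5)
Your proposal is correct and follows essentially the same route as the paper's own proof: take a coupling $r$ supported on $\{\le\}$, approximate it by empirical distributions inside $P(\{\le\})$ using the unordered density theorem of~\cite{ours_kantorovich}, read off the coordinate tuples (which are ordered componentwise since each pair lies in $\{\le\}$), and use shortness of the marginal maps to transfer the convergence to $p$ and $q$. The only difference is that you spell out two details the paper leaves implicit---the finite first moment of $r$ and the isometry of $P(\{\le\})\hookrightarrow P(X\otimes X)$---which strengthens rather than changes the argument.
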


In other words, the order of $PX$ is the closure of the order induced by the image of all the empirical distributions. Or equivalently, any two probability measures in stochastic order can be approximated arbitrarily closely by uniform finitely supported measures which are also stochastically ordered.

This result generalizes Lawson's recent~\cite[Theorem~4.8]{lawson}, who has also found applications of this type of result to generalizations of operator inequalities.

\begin{proof}
 Consider the set
 \begin{equation}
  I(X) := \bigcup_{N\in\cat{FinUnif}} X_{|N|} \subseteq PX ,
 \end{equation}
 where $X_{|N|}$ is the quotient of $X^N$ under permutations of the components\footnote{See the \emph{symmetrized power functors} of~\cite{ours_kantorovich}.}. This set is dense in $PX$~\cite[Theorem~3.3.3]{ours_kantorovich}, and we equip it with the smallest ordering relation which makes the canonical maps $X^N \to I(X)$ monotone; by Lemma~\ref{splitting}, this is equivalently the restriction of the stochastic order from $PX$ to $I(X)$.

Let now $p,q \in PX$, and suppose $p\le q$. 
 By Corollary \ref{kellererthm}, there exists a joint $r$ on $X\otimes X$ supported on $\{\le\}$ with marginals $p$ and $q$. Now consider $\{\le\} \subseteq X^2$ and the subset $I(\{\le\})$ of $P(\{\le\})$, which is dense by~\cite[Theorem~3.3.3]{ours_kantorovich}. This means that for every $\e>0$, we can find a $\bar r\in I(\{\le\})$ such that $d(r,\bar r) < \e$. Let now $\bar{p},\bar{q}$ be the marginals of $\bar r$. Since the marginal projections are short~\cite[Proposition~5.10]{ours_bimonoidal}, we have $d(p,\bar{p})<\e$ and  $d(q,\bar{q}) <\e$. Moreover, again by Corollary~\ref{kellererthm}, since $\bar r$ is supported on $\{\le\}$, we also have $\bar{p} \le \bar{q}$. 
 By taking $\e$ smaller and smaller, we get the desired sequence.
\end{proof}

\begin{cor}
 $PX$ is the colimit of $X^{(-)} : \cat{FinUnif} \to \cat{COMet}$, with colimit components given by the empirical distribution maps $i_N : X^N \to PX$.
\end{cor}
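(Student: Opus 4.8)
The plan is to bootstrap from the corresponding unordered statement recalled at the start of this subsection, namely that the underlying complete metric space of $PX$ is the colimit of the underlying diagram $U X^{(-)}$ in $\cat{CMet}$ with cocone the (underlying maps of the) empirical distribution maps $i_N$ (\cite[Theorem~3.3.7]{ours_kantorovich}). All the purely metric content is thereby already in hand, and the only thing to supply is the interaction with the order. I would not try to compute the $\cat{COMet}$-colimit abstractly (this would force me to confront how colimits in $\cat{COMet}$ reconstruct the order); instead I verify the universal property directly for $PX$ equipped with its stochastic order.

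First I would record that $(i_N : X^N \to PX)$ is a cocone in $\cat{COMet}$: each $i_N$ is short and monotone, and the cocone identities $i_M \circ X^\phi = i_N$ hold, all of which is stated in Section~\ref{ssecempdist} (shortness, monotonicity, and naturality of $i$ in $N$). The core of the argument is the universal property. Let $Y\in\cat{COMet}$ and let $(f_N : X^N \to Y)$ be an arbitrary cocone of short monotone maps. Applying the forgetful functor $U$ and invoking the unordered colimit, there is a unique short map $f : PX \to Y$ with $f \circ i_N = f_N$ for all $N$; uniqueness in $\cat{COMet}$ is automatic since $U$ is faithful. It remains only to show $f$ is monotone, and this is exactly where Proposition~\ref{orderdensity} does the work. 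Given $p \le q$ in $PX$, the proposition yields $\bar p_j, \bar q_j \in X^{N_j}$ with $\bar p_j \le \bar q_j$ and $i_{N_j}(\bar p_j)\to p$, $i_{N_j}(\bar q_j)\to q$. Monotonicity of $f_{N_j}$ gives $f(i_{N_j}(\bar p_j)) = f_{N_j}(\bar p_j) \le f_{N_j}(\bar q_j) = f(i_{N_j}(\bar q_j))$, and continuity of the short map $f$ gives $f(i_{N_j}(\bar p_j))\to f(p)$ and $f(i_{N_j}(\bar q_j))\to f(q)$; since the order on $Y$ is closed, passing to the limit yields $f(p)\le f(q)$.

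The step I expect to be the genuine subtlety is not the comparison map $f$ (whose monotonicity is a one-line limiting argument once Proposition~\ref{orderdensity} and the closedness of the order on $Y$ are available) but rather the well-formedness of the target object: one must know that $PX$ carries the stochastic order as a \emph{closed partial order}, so that $PX$ really lies in $\cat{COMet}$ and the universal property is being checked against the correct structure. Reflexivity and transitivity are elementary (the latter is immediate from Kellerer's characterization, Theorem~\ref{kellererthm}), so the substantive inputs are closedness of the stochastic order and antisymmetry on the relevant spaces; these are the points I would make sure are in place before asserting that the cocone $(i_N)$ exhibits $PX$ as a colimit in $\cat{COMet}$ rather than merely in $\cat{CMet}$.
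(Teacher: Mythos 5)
Your proof is correct and takes essentially the same route as the paper's: both bootstrap from the unordered colimit in $\cat{CMet}$ via~\cite[Theorem~3.3.7]{ours_kantorovich} and then verify that the induced map $PX \to Y$ is monotone by combining Proposition~\ref{orderdensity} with continuity of the induced short map and closedness of the order on $Y$. Your closing concern about well-formedness---that $PX$ with the stochastic order genuinely lies in $\cat{COMet}$, i.e.\ that the order is closed and antisymmetric---is a fair one, but note that the paper's own proof leaves this implicit as well, establishing closedness and antisymmetry explicitly only in the L-ordered setting (Theorem~\ref{Ldual} and Corollary~\ref{antisymmetry}).
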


\begin{proof}
 By~\cite[Theorem~3.3.7]{ours_kantorovich}, we already know that upon forgetting the order structure, we obtain a colimit in $\cat{CMet}$. Therefore, we only need to show that given any commutative cocone indexed by $N$, i.e.\ made up of triangles
 \begin{equation}\label{cocone}
  \begin{tikzcd}
   X^N \ar[swap]{dr}{f_N} \ar{r}{\phi} & X^{M} \ar{d}{f_{M}} \\
   & Y
  \end{tikzcd}
 \end{equation}
 where each cocone component $f_N$ is monotone, then also the unique short map $u$ in
 \begin{equation}\label{unicocone}
  \begin{tikzcd}
   X^N \ar[swap]{d}{i} \ar{dr}{f_N} \\
   PX \arrow[dashrightarrow,swap]{r}{u} & Y
  \end{tikzcd}
 \end{equation}
 is monotone. Now let $p\le q$. By Proposition \ref{orderdensity}, we can find sequences $\{N_j\}$ in $\cat{FinUnif}$, and $\{\bar{p}_j\},\{\bar{q}_j\}$ such that:
 \begin{itemize}
  \item $\bar{p}_j,\bar{q}_j\in X^{N_j}$ for all $j$;
  \item $i(\bar{p}_j)\to p$ and $i(\bar{q}_j)\to q$;
  \item $\bar{p}_j\le \bar{q}_j$ in the order of $X^{N_j}$ for all $j$.
 \end{itemize}
 Since $u$ is short, it is in particular continuous. By the commutativity of \eqref{unicocone},
 \begin{align*}
  u(p) =  u \big(\lim_j i(\bar{p}_j) \big) = \lim_j u\circ i(\bar{p}_j) = \lim_j f_{N_j} (\bar{p}_j) ,
 \end{align*}
 and just as well $u(q) =  \lim_j f_{N_j} (\bar{q}_j)$.
 Now for all $j$, $\bar{p}_j\le \bar{q}_j$, and since all the $f_{N_j}$ are monotone, $f_{N_j}(\bar{p}_j)\le f_{N_j}(\bar{q}_j)$. 
 By the closure of the order on $Y$, we then have that 
 \begin{align*}
  u(p) = \lim_j f_{N_j} (\bar{p}_j) \le \lim_j f_{N_j} (\bar{q}_j) = u(q) ,
 \end{align*}
 which means that $u$ is monotone. 
\end{proof}

\subsection{Properness of the marginal map}\label{ssecproperness}

This subsection is a technical development unrelated to the order structure, but we include it here since Corollary~\ref{mincouplingexists} below will be useful later. 
A map is \emph{proper} if preimages of compact sets are compact.\footnote{Note that some authors call this a \emph{perfect map}.}
The material in this section states that for every two complete metric spaces $X$ and $Y$, the marginalization map $P(X\otimes Y) \to PX \otimes PY$ is proper; while this could also have been derived using Prokhorov's theorem as in~\cite[Section~I.4]{villani} together with the separability of the support, we present a different approach based directly on compact approximation without resort to separability. In upcoming work, we will present an analogous proof in a more sophisticated situation, namely to prove properness of the monad multiplication $PPX \to PX$.

Here is the main statement.

\begin{thm}\label{deltaproper}
 Let $X,Y\in\cat{CMet}$. The map $\Delta:P(X\otimes Y)\to PX\otimes PY$ is proper. 
\end{thm}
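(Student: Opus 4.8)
The plan is to exploit that properness is a statement about preimages of compact sets, and that on the complete metric space $P(X\otimes Y)$ compactness coincides with being closed and totally bounded. Since the marginal map $\Delta$ is short (by \cite[Proposition~5.10]{ours_bimonoidal}) it is in particular continuous, so for any compact $K\subseteq PX\otimes PY$ the preimage $\Delta^{-1}(K)$ is closed, hence complete. It therefore remains to prove that $\Delta^{-1}(K)$ is totally bounded. Note that if $X$ and $Y$ were compact there would be nothing to do: then $X\otimes Y$ is compact, $P(X\otimes Y)$ is compact, and every closed subset is automatically compact. The entire content of the theorem is thus the reduction of the general complete case to this compact case, which I would carry out by \emph{compact approximation}.

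First I would record that the two marginal families $K_X:=\pi_X(K)$ and $K_Y:=\pi_Y(K)$ are compact in $PX$ and $PY$, being continuous images of $K$. The key technical input, which replaces the usual appeal to Prokhorov's theorem and to separability of the support, is a \emph{compactness criterion} for Wasserstein spaces: a family $\mathcal F\subseteq PX$ over a complete space $X$ is relatively compact if and only if it is uniformly tight and has uniformly integrable first moment, i.e.\ for a fixed basepoint $x_0$, for every $\varepsilon>0$ there is a compact $C\subseteq X$ with $\sup_{p\in\mathcal F}p(X\setminus C)<\varepsilon$, and $\sup_{p\in\mathcal F}\int_{d(x_0,x)>R}d(x_0,x)\,dp(x)\to 0$ as $R\to\infty$. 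This criterion is exactly the place where the metric lifting property of Appendix~\ref{appmlift} enters, allowing one to approximate the measures in a compact family uniformly by measures supported on a common compact set while controlling the first-moment distance. Applying it to the compact sets $K_X$ and $K_Y$ yields compact sets $C_X\subseteq X$, $C_Y\subseteq Y$ together with uniform moment bounds that I will feed into the coupling estimate below.

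The remaining and most delicate step is to transfer these uniform bounds from the marginals to the couplings, using the $\ell^1$-additivity of the product metric: for $r\in P(X\otimes Y)$ with marginals $p$ and $q$ the cost relative to the basepoint $(x_0,y_0)$ splits as $d(x_0,x)+d(y_0,y)$. Uniform tightness of $\Delta^{-1}(K)$ is then immediate, since $C_X\otimes C_Y$ is compact and $r\big((X\times Y)\setminus(C_X\times C_Y)\big)\le p(X\setminus C_X)+q(Y\setminus C_Y)$. Uniform integrability of the first moment requires more care because of the \emph{cross terms} $\int_{\{d(x_0,x)>R\}}d(y_0,y)\,dr$: these are not controlled by either marginal alone, and I expect this to be the main obstacle. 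I would handle it by splitting such an integral as $\int_{\{d(y_0,y)>S\}}d(y_0,y)\,dq + S\,p\big(\{d(x_0,x)>R\}\big)$, choosing $S$ large using uniform integrability of $K_Y$ and then $R$ large using the uniform first-moment bound on $K_X$ together with Markov's inequality. Once both conditions are verified for the family $\Delta^{-1}(K)$, the compactness criterion gives that it is relatively compact; combined with the closedness established at the outset, this shows $\Delta^{-1}(K)$ is compact, and hence that $\Delta$ is proper.
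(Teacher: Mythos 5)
Your overall strategy---reduce to a tightness/uniform-integrability compactness criterion for Wasserstein spaces---is the ``Prokhorov route'' that the paper explicitly acknowledges as an alternative (see the opening remarks of Section~\ref{ssecproperness}), and your coupling estimates (the tightness bound $r\big((X\times Y)\setminus(C_X\times C_Y)\big)\le p(X\setminus C_X)+q(Y\setminus C_Y)$ and the splitting of the cross term via Markov's inequality) are fine as far as they go. However, there is a genuine gap at the heart of the argument: the compactness criterion itself. You state it as a known fact for arbitrary complete metric spaces, but the standard versions of this criterion are proved for \emph{Polish} spaces, via Prokhorov's theorem; here $X$ and $Y$ are complete but possibly non-separable, the measures are Radon, and the Wasserstein topology is in general strictly finer than the weak topology (the paper comments on exactly this point after Corollary~\ref{mincouplingexists}). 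Extending the criterion to this setting is not a citation but a real argument---for instance, one must show that a uniformly tight family is concentrated on a common closed separable subspace and then invoke the Polish theory there, and separately that Wasserstein-compact families are uniformly tight (which itself needs an iterated-enlargement argument, since $\varepsilon$-neighborhoods of compact sets need not be compact). Since the criterion carries the entire weight of your proof, asserting it is essentially assuming the theorem's hard content.

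Compounding this, your claimed justification for the criterion is wrong: you say it is ``exactly the place where the metric lifting property of Appendix~\ref{appmlift} enters.'' The metric lifting property (Definition~\ref{defmetriclifting}) is a statement about lifting \emph{points along a given short map} $f:X\to Y$; it says nothing about tightness or compactness criteria in $PX$. In the paper it plays a completely different role: Lemma~\ref{deltametriclifting} establishes it for the marginal map $\Delta$ itself, Lemma~\ref{pointtoproper} uses it to reduce properness to compactness of preimages of \emph{single points}, and then the preimage $\Delta^{-1}(p,q)$ is shown totally bounded by lifting each coupling $r$ to a nearby coupling $r_\varepsilon$ of finitely supported approximations $(p_\varepsilon,q_\varepsilon)$, which forces $r_\varepsilon$ into the compact set $P(K_\varepsilon\times H_\varepsilon)$. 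That argument never needs tightness, Prokhorov, or separability---which is precisely its point. To repair your proof you would either have to actually prove the non-separable compactness criterion (a substantial detour), or redirect the metric lifting property to the use it is actually suited for, namely the reduction to point preimages and the compact-approximation argument above.
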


In order to prove the theorem, we need some technical results, part of which can be found in Appendix~\ref{appmlift}. In particular, we need the notion of \emph{metric lifting} (Definition~\ref{defmetriclifting}), which is a sort of metric analogue of the homotopy lifting property. 

\begin{lemma}\label{deltametriclifting}
Let $X,Y\in\cat{CMet}$. Then the marginal map $\Delta : P(X\otimes Y) \to PX \otimes PY$ has the metric lifting property of Definition~\ref{defmetriclifting}.
\end{lemma}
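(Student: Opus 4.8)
The plan is to prove that the marginal map $\Delta : P(X\otimes Y) \to PX \otimes PY$ has the metric lifting property by constructing, for any path in the target that needs to be lifted, an explicit coupling-based lift in the source. Since the metric lifting property (Definition~\ref{defmetriclifting}) is a metric analogue of the homotopy lifting property, the essential task is: given a measure $s \in P(X\otimes Y)$ with $\Delta(s) = (p,q)$, and given a target pair $(p',q') \in PX \otimes PY$ at distance $\delta$ from $(p,q)$, produce a lift $s' \in P(X\otimes Y)$ with $\Delta(s') = (p',q')$ and $d(s,s') \le \delta$ (or within the tolerance the definition requires). The key realization is that moving the two marginals can be handled coordinate-wise by gluing optimal transport plans along the existing joint measure.

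First I would unpack what needs to be lifted: a short path or a pair of measures in $PX \otimes PY$ near $\Delta(s)$, measured in the $\ell^1$-sum metric, so that the distance decomposes as $d(p,p') + d(q,q')$. Then I would take near-optimal couplings $\alpha \in \Gamma(p,p')$ on $X\times X$ and $\beta \in \Gamma(q,q')$ on $Y\times Y$ witnessing the Wasserstein distances $d(p,p')$ and $d(q,q')$ respectively. The core construction is to glue these to $s$: informally, one uses $s$ to correlate the $X$- and $Y$-coordinates of the source, uses $\alpha$ to transport the $X$-marginal from $p$ to $p'$, and uses $\beta$ to transport the $Y$-marginal from $q$ to $q'$. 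Concretely I would build a measure on $X\times X\times Y\times Y$ whose $(X,Y)$-marginal (first and third coordinates) is $s$, whose first-two-coordinate marginal is $\alpha$, and whose last-two-coordinate marginal is $\beta$, and then push forward to the $(X',Y')$-coordinates to obtain the desired $s'$. The gluing is legitimate because all three measures share the appropriate marginals ($\alpha$ has first marginal $p$, $\beta$ has first marginal $q$, and $s$ has marginals $p$ and $q$), so a common extension exists by the standard gluing lemma for couplings.

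I would then verify the two required properties of the lift. That $\Delta(s') = (p',q')$ follows by reading off the relevant marginals of the glued measure: projecting to the second $X$-coordinate gives $p'$ and to the second $Y$-coordinate gives $q'$. For the metric estimate, I would bound $d(s,s')$ by testing the coupling of $s$ and $s'$ induced by the glued measure, which moves each point $(x,y)$ to its image $(x',y')$; the transport cost splits as the $X$-displacement plus the $Y$-displacement under the $\ell^1$ metric on $X\otimes Y$, giving $d(s,s') \le \int d(x,x')\,d\alpha + \int d(y,y')\,d\beta$, which is $d(p,p') + d(q,q') = d\big((p,q),(p',q')\big)$ up to the near-optimality slack. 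Taking the couplings $\alpha,\beta$ to be optimal (or $\e$-optimal and letting $\e\to 0$, using completeness of $P(X\otimes Y)$ to pass to a limit if the definition demands an exact lift) closes the estimate.

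The main obstacle I expect is matching the construction to the precise formulation of the metric lifting property in Definition~\ref{defmetriclifting}, which is not visible in the excerpt: depending on whether it asks for lifts of single displacement steps, of geodesics, or of arbitrary short paths, one must either perform the gluing once or iterate/integrate it along the path while maintaining continuity of the lift. If the definition requires lifting a genuine path $t\mapsto (p_t,q_t)$ rather than a single pair, the challenge becomes measurable/continuous selection of the couplings $\alpha_t,\beta_t$ along the path and verifying that the glued family $s_t$ is itself a short path in $P(X\otimes Y)$; the displacement estimate above shows it is short, but continuity and the base-point condition $s_0 = s$ require care. A secondary technical point is the existence of optimal couplings $\alpha,\beta$ — this is where completeness of $X$ and $Y$ enters, guaranteeing via the lower-semicontinuity of the cost and tightness that the infimum in the Wasserstein distance is attained, so that the lift can be taken to be exact rather than merely approximate.
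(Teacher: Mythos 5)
Your proposal is correct, but it takes a genuinely different route from the paper. You lift by a gluing construction: disintegrate and glue $s$, an $\e$-optimal coupling $\alpha\in\Gamma(p,p')$, and an $\e$-optimal coupling $\beta\in\Gamma(q,q')$ into one measure on $X\times X\times Y\times Y$, then push forward to the primed coordinates; the induced coupling of $s$ and $s'$ gives $d(s,s')\le\int d\,d\alpha+\int d\,d\beta$, and the exact marginals $\Delta(s')=(p',q')$ are read off from $\alpha$ and $\beta$. Two remarks tighten this. First, Definition~\ref{defmetriclifting} is the single-point form (lift one target point within a strict distance bound), so your worry about lifting paths is moot, and the strict inequality gives you slack: $\e$-optimal couplings with $2\e < C - d(\Delta(s),(p',q'))$ close the estimate outright, with no limit and no need for exact optimal couplings --- which is important, because the existence of optimal couplings in this generality is Corollary~\ref{mincouplingexists} of the paper, proved \emph{downstream} of this very lemma, so relying on it would risk circularity. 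Second, the gluing lemma requires disintegration, which on non-separable spaces must be justified by restricting to the supports of the Radon measures involved (closed, separable, hence Polish), exactly as in the proof of Corollary~\ref{kantorovichduality2}; you should say this. The paper instead deliberately avoids all such separability arguments: it first proves the lifting for empirical distributions of finite tuples by a purely combinatorial permutation argument (Lemma~\ref{deltamliftpow}, via the formula~\eqref{permutationmetric}), and then extends to all of $P(X\otimes Y)$ by the density-plus-completeness extension result Lemma~\ref{metricliftinggeneral}. Your approach is the shorter, standard optimal-transport argument; the paper's buys independence from disintegration and separability, which is its stated aim in Section~\ref{ssecproperness}.
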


Before proving this, we first derive an analogous statement for finite powers. Denote by $\Delta^N:(X\otimes Y)^N\to X^N\otimes Y^N$ the map
\begin{equation}
 \{(x_n,y_n)\}_{n\in N} \longmapsto \big( \{x_n\}_{n\in N}, \{y_n\}_{n\in N} \big),
\end{equation}
which has the nice feature that it is an isometry.

\begin{lemma}\label{deltamliftpow}
 Let $\bar{r} = \{\bar{r}_{n}\}_{n\in N} \in (X\otimes Y)^N$ and $(\bar{p},\bar{q}) = (\{\bar{p}_{n}\}_{n\in N},\{\bar{q}_{n}\}_{n\in N}) \in X^N\otimes Y^N$. Suppose that
\begin{equation}\label{ddlesse}
 d\big( (i\otimes i)\circ \Delta^N(\bar{r}), (i(\bar{p}),i(\bar{q})) \big) < C 
\end{equation}
in $X^N \otimes Y^N$. Then there exists $\bar{s} \in (X\otimes Y)^N$ such that $\Delta^{N}(\bar{s})=(\bar{p}\circ\sigma,\bar{q}\circ\sigma')$ for some permutations $\sigma,\sigma'\in S_N$, and $d(\bar{r},\bar{s}) < C$. 
\end{lemma}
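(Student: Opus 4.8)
The plan is to exploit the decoupling built into $\Delta^N$ together with the permutation description of the power metric in~\eqref{permutationmetric}. Writing each component as $\bar r_n = (a_n,b_n)$ with $a_n\in X$ and $b_n\in Y$, set $\bar a := (a_n)_n\in X^N$ and $\bar b := (b_n)_n\in Y^N$, so that $\Delta^N(\bar r) = (\bar a,\bar b)$. Unwinding the hypothesis~\eqref{ddlesse} through the $\ell^1$-monoidal structure of $PX\otimes PY$, it reads
\[
 d\big(i(\bar a), i(\bar p)\big) + d\big(i(\bar b), i(\bar q)\big) < C .
\]
Thus the two summands are Wasserstein distances between empirical distributions of finite tuples, which is exactly the situation governed by~\eqref{permutationmetric}.

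First I would apply~\eqref{permutationmetric} to each factor. Since $S_N$ is finite the minima are attained, so I may pick permutations $\sigma,\sigma'\in S_N$ with
\[
 d\big(i(\bar a), i(\bar p)\big) = \min_{\sigma\in S_N} \frac{1}{|N|}\sum_{n} d(a_n,\bar p_{\sigma(n)}), \qquad d\big(i(\bar b), i(\bar q)\big) = \min_{\sigma'\in S_N} \frac{1}{|N|}\sum_{n} d(b_n,\bar q_{\sigma'(n)}).
\]
The one conceptual point is that $\sigma$ and $\sigma'$ may be chosen \emph{independently}: because $\Delta^N$ marginalizes, the $X$- and $Y$-coordinates of $\bar s$ need not be permuted in tandem, which is precisely why the statement allows two separate permutations. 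This independence is what makes the lemma hold; if a single permutation were required for both coordinates one could not in general realize both minima simultaneously.

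Then I would define $\bar s \in (X\otimes Y)^N$ componentwise by $\bar s_n := (\bar p_{\sigma(n)}, \bar q_{\sigma'(n)})$, so that $\Delta^N(\bar s) = (\bar p\circ\sigma, \bar q\circ\sigma')$ as required. Using the $\ell^1$-metric on $(X\otimes Y)^N$,
\[
 d(\bar r,\bar s) = \frac{1}{|N|}\sum_{n} \big[\, d(a_n,\bar p_{\sigma(n)}) + d(b_n,\bar q_{\sigma'(n)}) \,\big] = d\big(i(\bar a), i(\bar p)\big) + d\big(i(\bar b), i(\bar q)\big),
\]
which is exactly the left-hand side of~\eqref{ddlesse}, hence strictly below $C$. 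I do not expect any genuine obstacle here: the argument is essentially bookkeeping once one recognizes that the power distance is an optimization over permutations and that marginalization lets the two optimizations be carried out separately. The only mild care needed is that we obtain an exact equality $d(\bar r,\bar s) = d(i(\bar a),i(\bar p)) + d(i(\bar b),i(\bar q))$, so the strict bound from the hypothesis is inherited directly rather than requiring any slack.
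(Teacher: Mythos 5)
Your proof is correct and takes essentially the same approach as the paper's: both unwind the hypothesis through the $\ell^1$-monoidal structure and the permutation formula~\eqref{permutationmetric}, pick minimizing permutations $\sigma,\sigma'$ for the two factors, and set $\bar{s}_n := (\bar{p}_{\sigma(n)},\bar{q}_{\sigma'(n)})$. The only cosmetic difference is that the paper phrases the choice as a single joint minimum over pairs $(\sigma,\sigma')$, which coincides with your two independent minimizations since the two sums decouple.
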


\begin{proof}[Proof of Lemma~\ref{deltamliftpow}]
 Denote explicitly $\bar{r}_n:=(x_n,y_n)$ for all $n\in N$.
 By the formula~\eqref{permutationmetric}, condition~\eqref{ddlesse} is equivalent to
 \begin{equation*}
  \min_{\sigma,\sigma'\in S_{N}} \dfrac{1}{|N|} \sum_{n\in N}  \big(  d(x_{n},p_{\sigma(n)}) + d(y_{n},q_{\sigma'(n)}) \big) < C ,
 \end{equation*}
 which means that there exist $\bar\sigma,\bar\sigma'\in S_{N}$ such that 
 \begin{equation}\label{minbarsigma}
  \dfrac{1}{|N|} \sum_{n\in N} \big( d(x_{n},p_{\bar\sigma(n)}) + d(y_{n},q_{\bar\sigma'(n)}) \big) < C.
 \end{equation}
 Let now
 \begin{equation*}
  \bar{s}:=\{\bar{s}_{n}\} := \{(p_{\bar\sigma(n)},q_{\bar\sigma'(n)})\} \in (X\otimes Y)^N.
 \end{equation*}
 Then~\eqref{minbarsigma} implies that
 \begin{align*}
  d(\bar{r},\bar{s}) &= \dfrac{1}{|N|} \sum_{n\in N} \big( d(x_{n},p_{\bar\sigma(n)}) + d(y_{n},q_{\bar\sigma'(n)}) \big) < C.
 \end{align*} 
\end{proof}

In order to reduce Lemma~\ref{deltametriclifting} to this case, it helps to use a general density argument, Lemma~\ref{metricliftinggeneral}, stated and proven in Appendix~\ref{appmlift}.

\begin{proof}[Proof of Lemma~\ref{deltametriclifting}]

By Proposition~\ref{orderdensity}, we have that $I(X\otimes Y)$ is dense in $P(X\otimes Y)$ and $I(X)\otimes I(Y)$ is dense in $PX\otimes PY$. Lemma~\ref{deltamliftpow} says equivalently that given $r\in I(X\otimes Y)$ and $(p,q)\in I(X)\otimes I(Y)$ with $d(\nabla r,(p,q))< C$, there exists $s\in I(X\otimes Y)$ such that $\nabla(s)=(p,q)$ and $d(r,s) < C$. These are exactly the hypotheses of Lemma~\ref{metricliftinggeneral}, with $\nabla: P(X\otimes Y) \to PX\otimes PY$ in place of $f:X\to Y$, and $I(X\otimes Y)$ and $I(X)\otimes I(Y)$ in place of $D$ and $E$, respectively. Then by Lemma~\ref{metricliftinggeneral}, $\nabla$ satisfies the metric lifting.
\end{proof}

We can now prove the theorem with the help of Lemma~\ref{pointtoproper}, proven in Appendix~\ref{appmlift}, which says that whenever the metric lifting property holds, it is enough to check compactness of preimages of single points.

\begin{proof}[Proof of Theorem~\ref{deltaproper}]
 By Lemma~\ref{pointtoproper}, we only need to show that preimages of points are compact.
 Let $(p,q)\in PX\otimes PY$. Then by density, for every $\e>0$ there exist $p_{\e}\in PX$ and $q_{\e}\in PY$ with compact (even finite) support $K_{\e}$ and $H_{\e}$, respectively, and such that $d(p,p_{\e})<\e/4$ and $d(q,q_{\e})<\e/4$. By Lemma~\ref{deltametriclifting}, for every $r\in \Delta^{-1}(p,q)$ we can find some $r_{\e}$ such that $d(r,r_{\e})<\e/2$ and $\Delta(r_{\e})=(p_{\e},q_{\e})$. Now $r_{\e}$ must be supported on (a subset of) $K_{\e}\times H_{\e}$, which is itself compact, and which does not depend on $r$ varying in $\Delta^{-1}(p,q)$. In other words, the whole $\Delta^{-1}(p,q)$ is contained within an $\e/2$-neighborhood of $P(K_{\e}\times H_{\e})$. By compactness, for every $\e>0$, $P(K_{\e}\times H_{\e})$ can be covered by a finite number of balls of radius $\e/2$. Then $\Delta^{-1}(p,q)$ can be covered by a finite number of balls of radius $\e$, i.e.~it is totally bounded. 
 Since $\Delta$ is continuous, $\Delta^{-1}(p,q)$ is closed. Therefore $\Delta^{-1}(p,q)$ is compact.
\end{proof}

In particular, we have shown that an optimal coupling exists, generalizing~\cite[Theorem~I.4.1]{villani} to the non-separable case:

\begin{cor}\label{mincouplingexists}
 Let $X\in\cat{CMet}$. Given $p,q\in PX$, the set of couplings $\Gamma(p,q)=\Delta^{-1}(p,q)$ is compact. Therefore the infimum appearing in the Kantorovich duality formula is actually a minimum:
 \begin{equation}
  \min_{r\in\Gamma(p,q)} \int_{X\times X} c(x,y) \,dr(x,y) = \sup_{f} \: \left( \int_X f dq - \int_X f \, dp  \right) ,
 \end{equation}
\end{cor}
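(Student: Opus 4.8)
The plan is to obtain both assertions as short consequences of the properness of $\Delta$ established in Theorem~\ref{deltaproper}, specialized to the case $Y = X$, so that $\Delta : P(X\otimes X)\to PX\otimes PX$. The real content lies in that theorem; what remains here is essentially a formal deduction together with one standard lower-semicontinuity argument.

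First I would dispose of the compactness claim. The singleton $\{(p,q)\}\subseteq PX\otimes PX$ is trivially compact, and by Theorem~\ref{deltaproper} the map $\Delta$ is proper, so the preimage $\Delta^{-1}(p,q)$ of this compact set is compact. Since $\Gamma(p,q)$ is by definition exactly the fiber $\Delta^{-1}(p,q)$, this already gives the first statement with no further work.

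For the claim that the infimum is attained, I would show that the cost functional $r\mapsto \int_{X\times X} c(x,y)\,dr(x,y)$ is lower semicontinuous on $\Gamma(p,q)$ in the Wasserstein metric. This functional is real-valued because $c$ is bounded above by the distance and every $r\in P(X\otimes X)$ has finite first moment, so $\int c\,dr\le\int d\,dr<\infty$. For the semicontinuity, the two inputs are that convergence in the Wasserstein distance on $P(X\otimes X)$ implies weak convergence of measures, and that $c$ is lower semicontinuous and nonnegative; writing such a $c$ as an increasing pointwise supremum of bounded continuous nonnegative functions $c_k$, monotone convergence gives $\int c\,dr = \sup_k \int c_k\,dr$, a supremum of weakly continuous functionals, hence weakly lower semicontinuous and therefore lower semicontinuous in the Wasserstein topology. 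A lower semicontinuous real-valued function on a nonempty compact space attains its infimum, so the infimum over the compact set $\Gamma(p,q)$ is a genuine minimum; combining this with the equality already supplied by Corollary~\ref{kantorovichduality2} yields the displayed identity with $\inf$ replaced by $\min$.

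I do not expect a serious obstacle: the heavy lifting is done by Theorem~\ref{deltaproper}, and the compactness assertion is a one-line consequence of properness applied to a point. The only step warranting mild care is the lower semicontinuity of the cost, and specifically the passage from Wasserstein convergence to weak convergence; the nonnegativity of $c$ keeps this clean, since it lets the integrand be approximated from below by bounded continuous functions.
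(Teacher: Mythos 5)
Your proposal is correct and takes essentially the same route as the paper: there, Corollary~\ref{mincouplingexists} is stated as an immediate consequence of Theorem~\ref{deltaproper}, i.e.\ compactness of the fiber $\Gamma(p,q)=\Delta^{-1}(p,q)$ under the proper map $\Delta$, from which attainment of the infimum follows. Your explicit lower-semicontinuity argument for $r\mapsto\int_{X\times X} c\,dr$ in the Wasserstein topology (via approximation of $c$ from below by bounded Lipschitz functions and the Portmanteau-type passage from Wasserstein to weak convergence) merely fills in a step the paper leaves implicit.
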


This does not seem to be a consequence of classical results like~\cite[Proposition~(1.2)]{kellerer}, which refer to the weak topology, while our compactness result refers to the topology induced by the Wasserstein distance; the latter is generally strictly finer in the non-separable case. However, thanks to compactness and Hausdorffness in both topologies, we can conclude \emph{a posteriori} that the Wasserstein topology on the subset $\Gamma(p,q)$ is equal to the weak topology.

\section{L-ordered spaces}\label{secLord}

\subsection{Definition}

In this section we study a compatibility condition between the metric and the order which is stronger than closedness. While closedness is a merely topological property, we now introduce a property that depends nontrivially \emph{on the metric itself}.

\begin{deph}\label{defLorder}
 Let $X$ be an ordered metric space. We say that $X$ is \emph{L-ordered} if for every $x,y\in X$ the following conditions are equivalent:
 \begin{itemize}
  \item $x\le y$;
  \item for every \emph{short, monotone} function $f:X\to\R$, $f(x)\le f(y)$.
 \end{itemize}
\end{deph}

It is easy to see that this condition implies closedness of the order. Also, the condition is similar to the following property of the metric, which all spaces have:
\begin{equation}
 d(x,y) = \sup_{f:X\to\R} f(x) - f(y),
\end{equation}
where the supremum is taken over all short maps. 
The intuition is that on L-ordered spaces, short functions, which are the functions that are enough to determine the metric, are also enough to determine the order. 

For all ordered metric spaces, the first condition in Definition~\ref{defLorder} implies the second. 
The converse does not always hold, as the following counterexample shows. 

\begin{eg}
\label{not_Lordered_example}
 Consider the space $X$ consisting of four different disjoint sequences $\{a_n\},\{b_n\},\{c_n\},\{d_n\}$ and two extra points $a,d$ with:
\begin{itemize}
 \item $\{a_n\}$ tending to $a$, with $d(a_n,a)=\frac{1}{n}$ for all $n\ge 1$;
 \item $\{d_n\}$ tending to $d$, with $d(d_n,d)=\frac{1}{n}$ for all $n\ge 1$;
 \item $a_n\le b_n$ for all $n\ge 1$,
 \item $c_n\le d_n$ for all $n\ge 1$,
 \item $d(b_n,c_n)=\frac{1}{n}$ for all $n\ge 1$;
 \item All other distances equal to 1;
 \item No points other than those indicated above are related by the order, in particular $a\nleq d$.
\end{itemize}
It is important to note that the two sequences $\{b_n\}$ and $\{c_n\}$ are not Cauchy. In fact, the only two nontrivial Cauchy sequences are $\{a_n\}$ and $\{d_n\}$, so that the space is even complete and the order is closed, resulting in $X\in\cat{COMet}$; Figure~\ref{not_Lordered_figure} provides an illustration.

Now consider a short, monotone function $X\to\R$. We have that:
\begin{align*}
 f(a) &= \lim_{n\to\infty} f(a_n) \le \lim_{n\to\infty} f(b_n) \\
 &= \lim_{n\to\infty} f(c_n) \le \lim_{n\to\infty} f(d_n) = f(d),
\end{align*}
however, $a\nleq d$. Therefore $X$ is not L-ordered.
\end{eg}

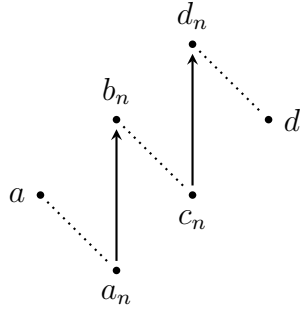
\begin{figure}
\begin{center}
  \begin{tikzpicture}[baseline=(current  bounding  box.center),>=stealth]
  \node[bullet,label=left:$a$] (a) at (2,0) {};
  \node[bullet,label=below:$a_n$] (an) at (3,-1) {};
  \node[bullet,label=above:$b_n$] (bn) at (3,1) {};
  \node[bullet,label=below:$c_n$] (cn) at (4,0) {};
  \node[bullet,label=above:$d_n$] (dn) at (4,2) {};
  \node[bullet,label=right:$d$] (d) at (5,1) {};
  \draw[relation,dotted] (a) -- (an);
  \draw[relation,dotted] (bn) -- (cn);
  \draw[relation,dotted] (dn) -- (d);
  \draw[function] (an) -- (bn);
  \draw[function] (cn) -- (dn);
 \end{tikzpicture}
\end{center}
\caption{The space $X \in \cat{COMet}$ constructed in Example~\ref{not_Lordered_example}. The dotted lines indicate distances $\frac{1}{n}$, and the arrows denote order relations.}
\label{not_Lordered_figure}
\end{figure}

Many ordered metric spaces of interest in mathematics are L-ordered. In particular, as we will show, L-ordered metric spaces are precisely the spaces which are embeddable into an ordered Banach space (see Remark~\ref{lordembed}). So far we have not yet been able to find any non-L-ordered space which is also compact:

\begin{prob}
	Is every compact ordered metric space automatically L-ordered?	
\end{prob}

In the following, we denote by $\lomet$ and $\lcomet$ the full subcategories of $\cat{OMet}$ and $\cat{COMet}$, respectively, consisting of the L-ordered (complete) metric spaces.

\subsection{Kantorovich duality for the order structure}\label{sseckantLord}

L-ordered spaces allow to study the stochastic order using Kantorovich duality. In particular, on an L-ordered space, we have a dual characterization of the order in terms of duality to Lipschitz functions. We want to prove the following theorem.

\begin{thm}\label{Ldual}
 Let $X\in\lcomet$ and $p,q\in PX$. Then $p\le q$ if and only if for every short monotone map $f:X\to\R$,
 \begin{equation}\label{Ldualorder}
  \int f \, dp \le \int f \, dq .
 \end{equation}
\end{thm}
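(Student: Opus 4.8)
The plan is to prove the two directions separately: the forward implication is elementary, while the converse is where the L-ordered hypothesis combines with Kantorovich duality.

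For the ``only if'' direction, suppose $p\le q$ and pick, by Definition~\ref{defstochord}, a coupling $r$ of $p$ and $q$ supported on $\{\le\}$. For any short monotone $f:X\to\R$ I would write
\[
\int f\,dq-\int f\,dp=\int_{X\times X}\big(f(y)-f(x)\big)\,dr(x,y),
\]
which is legitimate since $f$ is short and $p,q$ have finite first moment. As $r$ lives on $\{\le\}$ and $f$ is monotone, the integrand is nonnegative $r$-almost everywhere, giving \eqref{Ldualorder}. This direction uses neither completeness nor L-orderedness.

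For the ``if'' direction I would introduce the auxiliary cost $c:X\otimes X\to\R_+$ defined by
\[
c(x,y):=\sup_{f}\big(f(x)-f(y)\big),
\]
the supremum ranging over short monotone $f:X\to\R$. One checks routinely that $c$ satisfies the hypotheses of Corollary~\ref{kantorovichduality2}: it is nonnegative (constant functions are admissible), bounded above by $d$ (each $f$ is short), it obeys the triangle inequality by splitting $f(x)-f(z)=(f(x)-f(y))+(f(y)-f(z))$, and it is lower semicontinuous as a supremum of the continuous functions $(x,y)\mapsto f(x)-f(y)$. The decisive property, and the \emph{only} place the L-ordered condition is used, is that $c(x,y)=0$ if and only if $x\le y$: indeed $c(x,y)\le 0$ says exactly that $f(x)\le f(y)$ for every short monotone $f$, which by Definition~\ref{defLorder} is equivalent to $x\le y$. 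Hence $\{c=0\}=\{\le\}$.

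Next I would apply Corollary~\ref{kantorovichduality2} to this cost, obtaining
\[
\inf_{r\in\Gamma(p,q)}\int_{X\times X} c\,dr=\sup_{g}\left(\int g\,dq-\int g\,dp\right),
\]
with $g$ ranging over functions satisfying $g(y)-g(x)\le c(x,y)$ for all $x,y$. The key translation is that these admissible $g$ are precisely the functions of the form $g=-h$ with $h$ short and monotone: if $h$ is short monotone it is one of the functions in the supremum defining $c$, so $h(x)-h(y)\le c(x,y)$; conversely, an admissible $g$ forces $-g$ monotone by testing on pairs $x\le y$ (where $c(x,y)=0$) and forces $-g$ short because $c\le d$. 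Substituting $g=-h$, the dual value becomes $\sup_{h}\big(\int h\,dp-\int h\,dq\big)$ over short monotone $h$, which is $\le 0$ by the hypothesis \eqref{Ldualorder} and $\ge 0$ by taking $h=0$; hence the common value is $0$. By Corollary~\ref{mincouplingexists} the infimum is attained by some coupling $r\in\Gamma(p,q)$ with $\int c\,dr=0$, so $c=0$ holds $r$-almost everywhere, meaning $r$ gives zero mass to the complement of $\{c=0\}=\{\le\}$ and is therefore supported on $\{\le\}$. By Definition~\ref{defstochord} this is exactly $p\le q$.

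The main obstacle is conceptual rather than computational: it is the choice of the cost $c$ together with the verification that its zero set is exactly the order $\{\le\}$, which is where L-orderedness is essential and which makes the dual feasibility constraint $g(y)-g(x)\le c(x,y)$ coincide precisely with ``$-g$ short and monotone.'' Once this is in place, the lower semicontinuity, triangle inequality, and attainment of the minimum are all routine given Corollaries~\ref{kantorovichduality2} and~\ref{mincouplingexists}.
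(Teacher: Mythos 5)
Your proof is correct and follows essentially the same route as the paper: your auxiliary cost $c$ is exactly the L-distance $d_L$ of Definition~\ref{defLdist}, your verification of its properties (nonnegativity, triangle inequality, bound by $d$, lower semicontinuity, and $\{c=0\}=\{\le\}$ via L-orderedness) reproduces Proposition~\ref{dlproperties}, and the remainder of your argument is the paper's application of Corollary~\ref{kantorovichduality2} and Corollary~\ref{mincouplingexists}. The only cosmetic difference is that you define the cost inline and spell out the sign change $g=-h$ in the dual constraint, which the paper handles implicitly.
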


The direction ``only if'' of the theorem does not need $L$-orderedness (see the proof below). Indeed, we can interpret $L$-orderedness as the fact that short monotone maps are \emph{enough} to detect the stochastic order. 
We will prove the theorem using ordinary Kantorovich duality with respect to a suitable cost function. The latter will be the following quantity, which is sensitive to both the metric and the order structure.

\begin{deph}\label{defLdist}
Let $X$ be an ordered metric space. The \emph{L-distance} is
\begin{equation}
 d_L(x,y) := \sup_{f:X\to\R} \left( f(x) - f(y) \right),
\end{equation}
where the supremum is taken over all short, monotone maps.
\end{deph}

This quantity can be interpreted as a \emph{Lawvere metric} compatible with the order, see Appendix~\ref{secindlmet}. More intuitively, the L-distance is to short monotone maps as the usual distance is to short maps, as the following remark shows. 

\begin{remark}\label{smL}
 Let $X$ and $Y$ be ordered metric spaces, and let $f:X\to Y$ be short and monotone. Then 
 \begin{align*}
  d_L\big( f(x), f(x') \big) &= \sup_{g:Y\to\R} \left( g(f(x)) - g(f(x')) \right) \\
  &\le \sup_{h:X\to\R} \left( h(x) - h(x') \right) = d_L(x,x'),
 \end{align*}
 where as usual $g$ and $h$ range over short, monotone maps.
\end{remark}

Here are some useful properties satisfied by $d_L$, which make it suitable for Kantorovich duality.

\begin{prop}\label{dlproperties}
 Let $X$ be an ordered metric space, not necessarily L-ordered. The L-distance satisfies the following properties:
 \begin{enumerate}
  \item For all $x,y\in X$ such that $x\le y$, we have $d_L(x,y)=0$. In particular, $d_L(x,x)=0$. 
  \item If (and only if) $X$ is L-ordered, $d_L(x,y)=0$ implies $x\le y$ for all $x,y$ in $X$.
  \item $d_L$ satisfies the triangle inequality: for every $x,y,z\in X$,
  $$
  d_L (x,z) \le d_L(x,y) + d_L(y,z) ,
  $$
  so that $d_L$ is a Lawvere metric (see Section~\ref{secindlmet}).
  \item $d_L$ is bounded above by the metric: for all $x,y$ in $X$, $d_L(x,y)\le d(x,y)$. 
  \item $d_L$ is jointly lower-semicontinuous.
 \end{enumerate}
\end{prop}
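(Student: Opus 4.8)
The plan is to verify the five properties directly from the definition of $d_L$, since each reduces to an elementary observation about the class of short monotone functions $f:X\to\R$. At the outset I would record three facts: the constant function $0$ is short and monotone, so the defining supremum is over a nonempty class and is always $\ge 0$; every short monotone $f$ is in particular short, hence $f(x)-f(y)\le d(x,y)$; and every such $f$ is continuous. The second fact is exactly property (d), and it also shows $d_L(x,y)\le d(x,y)<\infty$, so that $d_L$ is a well-defined finite quantity whose supremum I may freely manipulate.

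Properties (a), (c) and (e) are then routine. For (a), if $x\le y$ then monotonicity gives $f(x)\le f(y)$ for every admissible $f$, so each difference $f(x)-f(y)$ is nonpositive and $d_L(x,y)\le 0$; together with $d_L\ge 0$ this forces $d_L(x,y)=0$, and $d_L(x,x)=0$ follows from reflexivity. For (c) I would use the telescoping estimate $f(x)-f(z)=(f(x)-f(y))+(f(y)-f(z))\le d_L(x,y)+d_L(y,z)$, valid for every admissible $f$, and take the supremum over $f$. For (e) I would invoke the standard fact that a pointwise supremum of continuous functions is lower-semicontinuous: each map $(x,y)\mapsto f(x)-f(y)$ is continuous on $X\times X$, and $d_L$ is the supremum of this family.

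The one property with any content is (b), but even this is just an unpacking of Definition~\ref{defLorder}. Since $d_L\ge 0$ always, the equality $d_L(x,y)=0$ holds precisely when $f(x)\le f(y)$ for every short monotone $f$. The L-ordered condition asserts that $x\le y$ is equivalent to this same universally quantified inequality; its forward implication is always true (it is property (a)), so being L-ordered is equivalent to the reverse implication, which is exactly the statement that $d_L(x,y)=0$ implies $x\le y$. This yields the claimed biconditional. I do not expect a genuine obstacle anywhere; the only points that need care are to establish nonemptiness of the admissible class and finiteness of $d_L$ (both via boundedness by the metric $d$) before appealing to $d_L\ge 0$ in (a) and (b), and to phrase (b) as the precise restatement of the L-ordered definition rather than as a separate argument.
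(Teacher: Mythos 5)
Your proposal is correct and follows essentially the same route as the paper: (a), (c), (d), (e) are handled by the identical elementary observations (nonpositivity of differences plus the zero function, telescoping, enlarging the class of test functions, supremum of continuous functions), and your treatment of (b) as a direct unpacking of Definition~\ref{defLorder} is the same argument the paper gives, merely phrased as one equivalence chain instead of two separate implications. No gaps.
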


\begin{proof}
 \begin{enumerate}
  \item If $x\le y$, then for all short monotone functions $f$, we have $f(x)- f(y)\le 0$. The supremum is attained by $f=0$. 
  
  \item Suppose that $X$ is L-ordered. If
  $$
  d_L(x,y) = \sup_{f:X\to\R} \left( f(x) - f(y) \right) = 0,
  $$
  for all short, monotone maps $f:X\to\R$, 
  $$
  f(x) - f(y) \le 0,
  $$
  which means $f(x)\le f(y)$. Since $X$ is L-ordered, then $x\le y$.
  
  Suppose now that $X$ is \emph{not} L-ordered. Then there exist $x\nleq y$ such that for all short monotone $f:X\to\R$, $f(x)\le f(y)$. But then
  \begin{align*}
   d_L(x,y) = \sup_{f:X\to\R} \left( f(x) - f(y) \right) \le 0 ,
  \end{align*}
  and again the supremum is attained by $f=0$.
  
  \item Let $x,y,z\in X$. Then
 \begin{align*}
  d_L (x,z) &= \sup_{f:X\to\R} \left( f(x) - f(z) \right) \\
   &= \sup_{f:X\to\R} \left( f(x) - f(y) + f(y) - f(z) \right) \\
   &\le \sup_{f:X\to\R} \left( f(x) - f(y) \right) + \sup_{f:X\to\R} \left( f(y) - f(z) \right) \\
   &= d_L(x,y) + d_L(y,z) .
 \end{align*}
  \item For all $x,y\in X$,
  \begin{align*}
   d_L(x,y) &= \sup \: \{ \: f(x) - f(y) \; : \; f\mbox{ short and monotone} \} \\
   & \le \sup \: \{  \: f(x) - f(y) \; : \; f\mbox{ short} \}  = d(x,y) .
  \end{align*}
  
  \item $d_L$ is defined as a pointwise supremum of continuous functions, therefore it is lower-semicontinuous. \qedhere
 \end{enumerate}
\end{proof}

We are now ready to prove the duality theorem.

\begin{proof}[Proof of Theorem~\ref{Ldual}]
 Suppose that for all short, monotone $f:X\to\R$,
 \begin{equation*}
  \int f \, dp \le \int f \, dq,
 \end{equation*}
 or in other words, 
 \begin{equation*}
  \sup_{f:X\to\R} \left(  \int f \, dp - \int f \, dq \right) = 0 ,
 \end{equation*}
 where the supremum is taken over all short, monotone maps.
 Now the short monotone maps are precisely those which satisfy $f(x) - f(y) \le d_L(x,y)$ for all $x$ and $y$; this inequality holds for all short monotone $f$ by the very definition of $d_L$, and if the inequality holds, then shortness and monotonicity are both easily implied. Since $d_L$ is lower-semicontinuous and satisfies the triangle inequality by Proposition~\ref{dlproperties}, we can apply Kantorovich duality in the form of Corollary~\ref{kantorovichduality2} to obtain
 \begin{align*}
  0 = \sup_{f:X\to\R} \left(  \int f \, dp - \int f \, dq \right) = \min_{r\in\Gamma(p,q)} \int_{X\otimes X} d_L(x,y) \, dr(x,y)
 \end{align*}
 where the minimizing $r$ exists (Corollary~\ref{mincouplingexists}). 
 In other words, there exists a coupling $r$ entirely supported on 
 $$\{d_L(x,y)=0\}.$$ 
 Since $X$ is L-ordered, all the points in the set above are contained in $\{\le\}$. 
 So $r$ is supported on $\{\le\}$, which means that $p\le q$. 
 
 Conversely, suppose that such a coupling $r$ exists. Then we can use the following standard  Kantorovich duality argument,
 \begin{equation*}
   \int f \, dp - \int f \, dq = \int_{X\times X} \big( f(x) - f(y)  \big) \, dr \le 0.\,
 \end{equation*}
 as $r$ is supported on a region where $f(x)\le f(y)$. 
\end{proof}

From this characterization, it is easy to see that the order on $PX$ is closed and transitive. Antisymmetry will be proven in Corollary~\ref{antisymmetry}.

\begin{cor}\label{PpreservesLorder}
 Let $X$ be an L-ordered metric space. Then $PX$ is L-ordered too. 
\end{cor}

\begin{proof}
 Given a short, monotone map $f:X\to\R$, the assignment 
 $$
 p\mapsto \int f \, dp
 $$
 is short and monotone as a map $PX\to\R$.
 By Theorem~\ref{Ldual}, this determines the order. Therefore $PX$ is L-ordered.
\end{proof}

\subsection{Antisymmetry of the stochastic order}\label{ssecantis}

Here we prove that the stochastic order on any L-ordered space is a partial order, i.e.~it is antisymmetric. 
It has been an open question whether antisymmetry holds on every ordered metric space. This is known to be true for compact spaces~\cite{edwards}, and for certain types of cones in Banach spaces~\cite[Theorem~4.3]{hll}. After the current work was completed, the first named author has shown that the stochastic order is antisymmetric for Radon measures on \emph{any} topological space~\cite{antisymmetry}.

For the very large class of L-ordered spaces, we can prove antisymmetry using a Kantorovich duality argument, encoded in the following statement.

\begin{prop}\label{preantisymmetry}
 Let $X$ be an L-ordered metric space. Let $p,q\in PX$, and suppose that $p<q$ strictly. Then there exists a short monotone $f:X\to\R$ such that 
 \begin{align*}
  \int f \, dq > \int f \, dp \quad\mbox{strictly.}
 \end{align*}
\end{prop}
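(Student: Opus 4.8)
The plan is to argue by contradiction, reducing the desired strict separation to a measure-theoretic statement about a coupling supported on the graph of the order. Suppose no such $f$ exists. Since $p \le q$, Theorem~\ref{Ldual} already gives $\int f\,dp \le \int f\,dq$ for every short monotone $f\colon X\to\R$; the failure of the conclusion then forces the equality $\int f\,dp = \int f\,dq$ for \emph{all} such $f$. The goal becomes to show that this equality for all short monotone test functions, together with $p\le q$, forces $p=q$, contradicting the hypothesis $p\neq q$.

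First I would fix, via Definition~\ref{defstochord}, a coupling $r\in\Gamma(p,q)$ supported on $\{\le\}$. For any short monotone $g$, using that $r$ has marginals $p,q$, the assumed equality reads $\int_{X\otimes X}\big(g(y)-g(x)\big)\,dr = 0$; since $g$ is monotone and $r$ lives on $\{x\le y\}$, the integrand is nonnegative, so $g(x)=g(y)$ holds $r$-almost everywhere. The difficulty is that this exceptional null set depends on $g$, so a single function cannot see the whole order; I need a countable family of short monotone functions that jointly detects strict order $r$-almost everywhere.

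To build such a family I would use the L-distance. The supports of $p$ and $q$ are separable (as in the proof of Corollary~\ref{kantorovichduality2}), so $\tilde{X} := \supp p \cup \supp q$ is separable and $r$ is concentrated on $\tilde{X}\times\tilde{X}$. Choosing a countable dense set $\{z_j\}\subseteq\tilde{X}$, I set $g_j := d_L(\cdot,z_j)$; each $g_j$ is short and monotone, being a pointwise supremum of the short monotone functions $x\mapsto f(x)-f(z_j)$, and it is finite since $d_L\le d$ (Proposition~\ref{dlproperties}). The key separation step is: whenever $x\le y$ with $x\neq y$ in $\tilde{X}$, antisymmetry of the partial order on $X$ gives $y\nleq x$, so L-orderedness (Definition~\ref{defLorder}) yields $d_L(y,x)>0$; choosing $z_j$ with $d(z_j,x) < d_L(y,x)/2$ and using the triangle inequality together with $d_L\le d$ from Proposition~\ref{dlproperties}, one gets $g_j(x)\le d(x,z_j) < d_L(y,x)/2 < d_L(y,z_j) = g_j(y)$. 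Hence the strict-order set $\{(x,y)\in\tilde{X}\times\tilde{X} : x\le y,\ x\neq y\}$ is contained in $\bigcup_j \{g_j(x)<g_j(y)\}$.

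Finally I would combine the two ingredients: each set $\{g_j(x)<g_j(y)\}$ is $r$-null by the almost-everywhere equality above, so by countable subadditivity $r(\{x\le y,\ x\neq y\})=0$. As $r$ is supported on $\{\le\}$, it is therefore concentrated on the diagonal, which forces $p=q$ and delivers the contradiction. I expect the main obstacle to be exactly this passage from ``$g(x)=g(y)$ holds $r$-a.e.\ for each fixed $g$'' to ``$x=y$ holds $r$-a.e.'': it is resolved by the separability of the supports together with the metric estimate $d_L\le d$ and the triangle inequality, which let a single fixed countable family certify strict order everywhere on $\tilde{X}\times\tilde{X}$.
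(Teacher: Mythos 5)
Your proof is correct, but it takes a genuinely different route from the paper's. The paper argues directly: given a coupling $r$ of $p\neq q$ supported on $\{\le\}$, the coupling cannot be concentrated on the diagonal, so there is a point $(\bar{x},\bar{y})$ in the support of $r$ with $\bar{x}<\bar{y}$ strictly; L-orderedness then yields a \emph{single} short monotone $f$ with $f(\bar{y})>f(\bar{x})$, and integrating $f(y)-f(x)$ over a small neighborhood of $(\bar{x},\bar{y})$ disjoint from the diagonal gives the strict inequality. You instead argue by contradiction and replace the single witness attached to a support point by a \emph{countable test family}: the functions $g_j=d_L(\cdot,z_j)$ with $z_j$ ranging over a countable dense subset of $\supp p\cup\supp q$, whose shortness, monotonicity and finiteness follow from Proposition~\ref{dlproperties}, and whose joint ability to detect strict order is exactly your triangle-inequality estimate $g_j(x)\le d(x,z_j)<d_L(y,x)/2<d_L(y,z_j)=g_j(y)$. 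Each route leans on a different regularity property of Radon measures: the paper needs the existence of a support point off the diagonal (inner regularity), while you need separability of supports, which the paper itself invokes in the proof of Corollary~\ref{kantorovichduality2}. What the paper's argument buys is brevity and an explicit separating function; what yours buys is a stronger intermediate statement --- that $p\le q$ together with equality of integrals against \emph{all} short monotone maps forces any order-supported coupling onto the diagonal, hence $p=q$ --- and an argument whose measure-theoretic core (passing from a $g$-dependent null set to a single null set via a fixed countable family) is robust and reusable; this is indeed the main obstacle, and you identified and resolved it correctly.
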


\begin{proof}
 Our assumption is that $p\le q$ and $p\ne q$. Then there exists a coupling $r$ supported on the relation $\{\le\}$, which cannot be supported only on the diagonal $D:=\{(x,x)\}$, because we would otherwise have $p = q$ since the two projection maps $X \otimes X \to X$ are equal on the diagonal. In other words, there exists a point $(\bar{x},\bar{y})$ with $\bar{x} < \bar{y}$ strictly, and every open neighborhood of $(\bar{x},\bar{y})$ has strictly positive $r$-measure. Since $X$ is L-ordered and $\bar{y}\nleq\bar{x}$, there exists a short, monotone map $f:X\to\R$ such that $f(\bar{y})>f(\bar{x})$ strictly. We can then choose an open neighborhood $U$ of $(\bar{x},\bar{y})$ which is disjoint from the diagonal, and on which the function 
 $$
 (x,y) \mapsto f(y)-f(x)
 $$
 is strictly positive. 
 Therefore, 
 \begin{align*}
  \int f \, dq - \int f \, dp &= \int_{X\otimes X} \big( f(y)-f(x) \big) \, dr(x,y) \\
  &\ge \int_{U}  \big( f(y)-f(x) \big) \, dr(x,y) > 0
 \end{align*}
 strictly, which in turn means that
 \begin{align*}
  \int f \, dq > \int f \, dp .
 \end{align*}
\end{proof}

\begin{cor}\label{antisymmetry}
 Let $X$ be an L-ordered metric space. Then the stochastic order on $PX$ is antisymmetric.
\end{cor}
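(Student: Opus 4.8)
The plan is to deduce antisymmetry directly from Proposition~\ref{preantisymmetry} together with the easy direction of Theorem~\ref{Ldual}, by a short contradiction argument. Suppose $p,q\in PX$ satisfy both $p\le q$ and $q\le p$, and assume for contradiction that $p\ne q$. From $p\le q$ and $p\ne q$ we have $p<q$ strictly, so Proposition~\ref{preantisymmetry} applies and yields a short monotone map $f:X\to\R$ with
\begin{equation*}
 \int f\,dq > \int f\,dp \quad\text{strictly.}
\end{equation*}

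The other hypothesis $q\le p$ then supplies the contradiction. Indeed, the ``only if'' direction of Theorem~\ref{Ldual}---which, as noted there, does not even require L-orderedness---asserts that $q\le p$ implies $\int f\,dq \le \int f\,dp$ for \emph{every} short monotone map $f:X\to\R$, in particular for the $f$ produced above. This is incompatible with the strict inequality just obtained, so the assumption $p\ne q$ is untenable and we conclude $p=q$.

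The point to emphasize is that all the real work has already been carried out upstream. The construction of the separating short monotone function---the step that genuinely uses L-orderedness, via the Kantorovich-duality argument identifying an ordered pair $(\bar x,\bar y)$ with $\bar x<\bar y$ in the support of a coupling and separating it by a short monotone $f$---is exactly the content of Proposition~\ref{preantisymmetry}, and the translation between the coupling characterization of the order and the dual characterization in terms of integrals of short monotone functions is Theorem~\ref{Ldual}. The corollary itself is then a formal consequence, and I anticipate no obstacle beyond correctly invoking these two results and checking that the strict and non-strict inequalities for the same $f$ collide.
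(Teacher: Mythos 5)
Your proof is correct and follows essentially the same route as the paper: the paper likewise combines the easy (``only if'') direction of Theorem~\ref{Ldual} with Proposition~\ref{preantisymmetry}, merely phrasing the step as a contrapositive (both orders force $\int f\,dp = \int f\,dq$ for all short monotone $f$, which by Proposition~\ref{preantisymmetry} rules out $p<q$) rather than as your explicit contradiction. The two arguments are logically identical, so there is nothing to add.
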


\begin{proof}
 Let $p,q\in PX$, and suppose that both $p\le q$ and $q\le p$ in the stochastic order.
 Then necessarily
 $$
 \int f \, dp \, = \int f \, dq 
 $$
 for all short monotone maps $f:X\to\R$. By Proposition~\ref{preantisymmetry}, then, it must be that $p=q$.
\end{proof}

Since all ordered Banach spaces are L-ordered (see Corollary~\ref{oBan_Lorder}), and therefore so are all subsets of Banach spaces, this is a broad generalization of~\cite[Theorem~4.3]{hll}.

Edwards observes~\cite[p.~71]{edwards} that antisymmetry holds on all \emph{completely regular ordered spaces}, as defined in~\cite[p.52--54]{nachbin}. This does not imply our result: not all L-ordered spaces are completely regular ordered, as the following counterexample shows.

\begin{eg}
 Let $X$ be the Banach space $\ell^{\infty}$ of bounded sequences $\N\to\R$ equipped with the norm:
 $$
 \big\| \{x_i\}_{i=0}^\infty \big\| := \sup_{i} |x_i| .
 $$
 Consider the following cone (this construction was given to us by Rostislav Matveev):
 $$
 \left\{\{x_i\}_{i=0}^\infty \; \big| \; \forall i \ge 1,\, x_0 \ge \dfrac{1}{i}\,|x_i| \right\} .
 $$
 This cone is norm-closed, it makes $X$ an ordered Banach space, so in particular it is L-ordered (see Corollary~\ref{oBan_Lorder}). However, for increasing $i$, the cone has larger and larger aperture (so that it is not \emph{normal} in the sense of Krein~\cite[Definition~2.18]{conesduality}).
 Consider now the order interval between $(-1,0,\dots,0,\dots)$ and $(+1,0,\dots,0,\dots)$. This is expressed by the condition that $-1\le x_0\le 1$, and that for every $i \ge 1$,
 $$
  x_0 -1 \le \dfrac{1}{i}\,|x_i| \le x_0 +1 .
 $$
 Fix now an $i\in N$ and consider the sequence $\{x_i\}$ which is zero except at position $i$, where $x_i=i/2$. This is contained in the order interval considered above, since the constraint at $x_i$ reads
 $$
  -1 \le \dfrac{1}{2} \le +1 .
 $$
 However, the norm of this sequence is $i/2$, and $i$ can be chosen arbitrarily large. Therefore this order interval is not norm-bounded. 
 Every intersection of a lower and an upper open set in $X$ contains an order interval of the form above, possibly translated and rescaled, so that any such intersection would also not be norm-bounded. It follows that intersections of upper and lower open sets do not form a neighborhood basis. So, in particular, $X$ cannot be completely regular ordered. 
\end{eg}

\section{The ordered Kantorovich monad}
\label{secordp}

In~\cite{ours_kantorovich}, we showed that in the unordered case, the functor $P$ carries a monad structure whose algebras are the closed convex subsets of Banach spaces. Here we show that this monad structure can be lifted to the category $\lcomet$. The easiest way to do this is to show that all the structure maps are monotone between the respective orders, so that the commutativity of the necessary diagrams is inherited from $\cat{CMet}$. This will be done in~\ref{ssecmonadstr}. In~\ref{ssecordms}, we show that $P$ is also a bimonoidal monad as in the unordered case~\cite[Section~5]{ours_bimonoidal}. In Section~\ref{secordalg}, we will study the category of $P$-algebras and prove a number of general properties, including a characterization of $P$-algebras as closed convex subsets of ordered Banach spaces.

\subsection{Monad structure}\label{ssecmonadstr}

First of all, by Corollary~\ref{PpreservesLorder}, if $X\in\lcomet$, then $PX\in\lcomet$ too.

We will now lift the Kantorovich monad of \cite{ours_kantorovich} to $\lcomet$. To do this, we have to:
\begin{enumerate}
 \item Show that if $f : X \to Y$ is monotone, then also $Pf : PX \to PY$ is monotone.
 \item Show that the structure transformations have components $\delta : X \to PX$ and $E : PPX \to PX$ which are monotone. 
\end{enumerate}

The commutativity of all relevant diagrams involved is obvious, since the forgetful functor $\lcomet \to \cat{CMet}$ is faithful.

We start with the first item. As in~\cite{ours_kantorovich}, we also write $f_*$ as shorthand for $Pf$, since $Pf$ takes a probability measure on $X$ to its pushforward on $Y$.

\begin{prop}\label{Pfmonotone}
 Let $f:X\to Y$ be short and monotone. Then $Pf:PX\to PY$ is also monotone. 
\end{prop}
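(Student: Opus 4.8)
The plan is to argue directly from the coupling definition of the stochastic order (Definition~\ref{defstochord}), which makes the statement transparent and, as a bonus, keeps it valid on all of $\cat{OMet}$ rather than only on L-ordered spaces. Suppose $p \le q$ in $PX$. By definition there is a coupling $r \in \Gamma(p,q)$, i.e.\ a measure on $X \otimes X$ with marginals $p$ and $q$, supported on the graph $\{\le\} \subseteq X \otimes X$. I would then push this coupling forward along $f \otimes f : X \otimes X \to Y \otimes Y$ and claim that $(f \otimes f)_* r$ witnesses $f_* p \le f_* q$.

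Two verifications are then needed. First, that $(f \otimes f)_* r$ is a coupling of $f_* p$ and $f_* q$: this is immediate, since pushforward commutes with the marginal projections, so the two marginals of $(f \otimes f)_* r$ are exactly $f_* p = Pf(p)$ and $f_* q = Pf(q)$. Second, that $(f \otimes f)_* r$ is supported on the graph $\{\le\} \subseteq Y \otimes Y$. This is where monotonicity of $f$ enters: it says precisely that $f \otimes f$ carries $\{\le\}$ in $X \otimes X$ into $\{\le\}$ in $Y \otimes Y$. Since $f$ is in particular continuous, the support of the pushforward satisfies $\supp\big((f\otimes f)_* r\big) \subseteq \cl\big((f \otimes f)(\supp r)\big) \subseteq \cl\big((f\otimes f)(\{\le\})\big) \subseteq \{\le\}$, where the final inclusion uses that $\{\le\}\subseteq Y\otimes Y$ is closed by the very definition of an ordered metric space. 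Hence $(f \otimes f)_* r$ is supported on the order relation of $Y$, which is exactly the condition $f_* p \le f_* q$.

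The only point requiring a little care — the main obstacle, such as it is — is the standard control of the support of a pushforward measure under a continuous map, namely $\supp(g_* r) \subseteq \cl\big(g(\supp r)\big)$; once this is in hand, closedness of $\{\le\}$ in $Y\otimes Y$ does the rest. I note in passing that when one restricts to $\lcomet$ the support argument can be bypassed entirely via Theorem~\ref{Ldual}: for every short monotone $h : Y \to \R$ the composite $h \circ f$ is again short and monotone, so the change-of-variables identity $\int h \, d(f_* p) = \int (h \circ f)\, dp$ combined with $\int (h\circ f)\,dp \le \int(h\circ f)\,dq$ (coming from $p \le q$) yields $\int h\,d(f_* p) \le \int h\,d(f_* q)$ for all such $h$, whence $f_* p \le f_* q$. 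The coupling argument is preferable, however, as it needs no L-orderedness assumption.
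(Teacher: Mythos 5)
Your proof is correct, but it takes a genuinely different route from the paper. The paper does not touch the coupling at all: it invokes Kellerer's characterization (Theorem~\ref{kellererthm}), by which $p \le q$ iff $p(C) \le q(C)$ for every closed upper set $C$, and then simply observes that for $f$ continuous and monotone, $f^{-1}(C)$ is again a closed upper set, so $(f_*p)(C) = p(f^{-1}(C)) \le q(f^{-1}(C)) = (f_*q)(C)$. That argument is two lines and involves no measure-theoretic care, but it leans on an imported nontrivial equivalence. Your argument instead works directly from Definition~\ref{defstochord}: push the coupling forward along $f \otimes f$, check the marginals, and control the support. This is more self-contained (Kellerer's theorem is never needed), and the one technical point you correctly flag --- $\supp(g_*r) \subseteq \cl\big(g(\supp r)\big)$ --- does hold here: for a Radon measure, any open set disjoint from the support is null (cover compact subsets by finitely many null neighborhoods and use inner regularity), so the pushforward of an open set missing $\cl\big(g(\supp r)\big)$ has measure zero; this works even without separability, which matters since the paper allows non-separable spaces. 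Both your main argument and the paper's work on all of $\cat{COMet}$ without L-orderedness, as the paper itself remarks after the proof; your closing alternative via Theorem~\ref{Ldual} is also valid but, as you say, only in $\lcomet$. In short: the paper trades your support lemma for Kellerer's theorem; yours is the more elementary proof relative to the definition of the stochastic order, the paper's is the shorter one given the toolkit it has already set up.
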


\begin{proof}
 Let $p\leq q$ in $PX$. Then we have to prove that for every closed upper set $C\subseteq Y$,
 \begin{align*}
  (f_*p)(C) \le (f_*q)(C) ,
 \end{align*}
 which means  
 \begin{align*}
  p(f^{-1}(C)) \le q(f^{-1}(C)) .
 \end{align*}
 Now since $f$ is continuous, $f^{-1}(C)$ is closed. Since $f$ is monotone, $f^{-1}(C)$ is an upper set. By definition of the order on $PX$, $p(C')\le q(C')$ for all upper closed sets $C'$.
 Therefore $(f_*p)(C) \le (f_*q)(C)$.
\end{proof}

Hence $P$ is indeed an endofunctor of $\lcomet$. Since we have not actually used L-orderedness, this still works with $\cat{COMet}$ in place of $\lcomet$. But since we want to focus on L-ordered spaces especially for the purposes of Section~\ref{secordalg}, we nevertheless formulate the statements themselves for $\lcomet$.
To prove the monotonicity of the structure maps, in particular, we will use the dual characterization of the order in terms of monotone short maps of Theorem~\ref{Ldual} (this approach only works in $\lcomet$).

\begin{prop}
\label{Pmonadmonotone}
 Let $X\in\lcomet$. Then;
 \begin{enumerate}
  \item $\delta:X\to PX$ is an order embedding;
  \item $E:PPX\to PX$ is monotone. 
 \end{enumerate}
\end{prop}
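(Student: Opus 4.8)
The plan is to deduce both statements from the dual characterization of the stochastic order established in Theorem~\ref{Ldual}, exploiting that $PX$ is again L-ordered (Corollary~\ref{PpreservesLorder}).

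For part (a), I would first observe that $\delta$ is monotone: if $x \le y$, then every short monotone $f:X\to\R$ satisfies $\int f\,d\delta_x = f(x) \le f(y) = \int f\,d\delta_y$, so $\delta_x \le \delta_y$ by Theorem~\ref{Ldual}. (Equivalently, the Dirac measure $\delta_{(x,y)}$ on $X\otimes X$ is a coupling supported on $\{\le\}$, witnessing $\delta_x \le \delta_y$ via Definition~\ref{defstochord}.) The content of ``order embedding'' is the reflection of the order, and this is exactly where L-orderedness of $X$ enters. Suppose $\delta_x \le \delta_y$. By Theorem~\ref{Ldual} this means $f(x) \le f(y)$ for every short monotone $f:X\to\R$, and since $X$ is L-ordered, Definition~\ref{defLorder} yields $x \le y$. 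Hence $\delta$ reflects the order and is an order embedding.

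For part (b), recall that $E$ is the integration (barycenter) map, characterized by $\int_X f\,d(E\mu) = \int_{PX}\big(\int_X f\,dp\big)\,d\mu(p)$ for short $f:X\to\R$. The key observation, already used in the proof of Corollary~\ref{PpreservesLorder}, is that for each short monotone $f:X\to\R$ the evaluation map $\hat f : PX \to \R$, $\hat f(p) := \int f\,dp$, is itself short and monotone. Now suppose $\mu \le \nu$ in $PPX$. Applying the ``only if'' direction of Theorem~\ref{Ldual} (which needs no L-orderedness) on $PPX$ with the test function $\hat f$, I get $\int \hat f\,d\mu \le \int \hat f\,d\nu$, that is, $\int_X f\,d(E\mu) \le \int_X f\,d(E\nu)$ by the barycenter formula. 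Since this holds for every short monotone $f:X\to\R$, and $PX$ is L-ordered, the nontrivial direction of Theorem~\ref{Ldual} on $PX$ gives $E\mu \le E\nu$, as desired.

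The argument is essentially formal once the right test functions are identified, and there is no serious analytic obstacle. The one point requiring care is the bookkeeping of which direction of Theorem~\ref{Ldual} is used where: monotonicity of $E$ relies only on the easy direction at the level of $PPX$ together with the nontrivial (L-ordered) direction at the level of $PX$, so I must invoke Corollary~\ref{PpreservesLorder} to know that $PX$ is L-ordered before applying the reflecting direction of the duality there.
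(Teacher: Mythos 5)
Your proposal is correct and follows essentially the same route as the paper: both parts are deduced from the dual characterization of the stochastic order (Theorem~\ref{Ldual}), using that $\delta$ and $E$ intertwine with integration against short monotone test functions, with L-orderedness of $X$ giving order reflection in part (a) and L-orderedness of $PX$ (Corollary~\ref{PpreservesLorder}) giving the conclusion in part (b). Your bookkeeping of which direction of the duality is used at which level ($PPX$ versus $PX$) is exactly what the paper's terser proof leaves implicit.
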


\begin{proof} 
\begin{enumerate}
 \item Let $x\le y\in X$, and let $f:X\to\R$ (short, monotone). Then
 \begin{align*}
  \int_{X} f\,d \delta(x) = f(x) \le f(y) = \int_{X} f\,d \delta(y).
 \end{align*}
 Therefore $\delta(x)\le\delta(y)$. The converse follows similarly, using Theorem~\ref{Ldual}.
 
 \item Let $\mu\le\nu$ in $PPX$, and again $f:X\to\R$ short and monotone. By Theorem~\ref{Ldual}, the assignment
 \begin{equation*}
  p \longmapsto \int_X f \,dp
 \end{equation*}
 is monotone as a function $PX\to\R$. Therefore we can write
 \begin{align*}
  \int_{X} f\,d(E\mu) = \int_{PX} \left( \int_X f\,dp \right) d\mu(p) \le \int_{PX} \left( \int_X f\,dp \right) d\nu(p) = \int_{X} f\,d(E\nu).
 \end{align*}
 By Theorem~\ref{Ldual}, we conclude that $E\mu\le E\nu$, so that $E$ is indeed monotone. \qedhere
\end{enumerate}
\end{proof}

We therefore obtain:

\begin{cor}
 $(P,\delta,E)$ is a monad on $\lcomet$ lifting the Kantorovich monad on $\cat{CMet}$. 
\end{cor}

We will call this monad with the same name whenever this should not cause confusion.

\subsection{Bimonoidal structure}\label{ssecordms}

The monad $P$ on $\cat{CMet}$ has a bimonoidal structure, which corresponds to the formation of product and marginal distributions~\cite{ours_bimonoidal}. We now extend this structure to $\lcomet$. 

First of all, is easy to see that if $X$ and $Y$ are $L$-ordered, then $X\otimes Y$ is as well.
Just like for the monad structure, it now suffices to show that its structure maps given by the formation of product distributions $\nabla:PX\otimes PY \to P(X\otimes Y)$ and the formation of marginals $\Delta:P(X\otimes Y) \to PX\otimes PY$ are monotone. 

\begin{lemma}\label{nablasm}
 Let $X,Y\in \lcomet$. Then $\nabla:PX\otimes PY \to P(X\otimes Y)$ is monotone. 
\end{lemma}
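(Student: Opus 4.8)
The plan is to use the dual characterization of the stochastic order provided by Theorem~\ref{Ldual}, since both $PX \otimes PY$ and $P(X\otimes Y)$ are $L$-ordered (the former because $PX$ and $PY$ are $L$-ordered by Corollary~\ref{PpreservesLorder} and the product of $L$-ordered spaces is $L$-ordered, the latter for the same reason applied to $X\otimes Y$). By Theorem~\ref{Ldual}, to show that $\nabla$ is monotone it suffices to take $(p,q) \le (p',q')$ in $PX \otimes PY$ and show that $\nabla(p,q) \le \nabla(p',q')$ in $P(X\otimes Y)$, and this latter order can be tested against short monotone functions $h : X\otimes Y \to \R$ by verifying $\int h \, d\nabla(p,q) \le \int h \, d\nabla(p',q')$.

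First I would unwind the hypothesis: the order on the product $PX \otimes PY$ is the product order, so $(p,q)\le(p',q')$ means $p \le p'$ in $PX$ and $q \le q'$ in $PY$. Applying Theorem~\ref{Ldual} in each factor, this is equivalent to saying $\int f\,dp \le \int f\,dp'$ for every short monotone $f:X\to\R$, and $\int g\,dq \le \int g\,dq'$ for every short monotone $g:Y\to\R$. The product distribution $\nabla(p,q)$ is $p\otimes q$, so by Fubini the integral of $h$ against it is $\int_X \int_Y h(x,y)\, dq(y)\, dp(x)$.

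The main step is then to fix a short monotone $h:X\otimes Y\to\R$ and interpolate, passing from $(p,q)$ to $(p',q')$ one marginal at a time. I would first compare $\int h \, d(p\otimes q)$ with $\int h \, d(p'\otimes q)$. For this, define $f(x) := \int_Y h(x,y)\,dq(y)$; this $f$ is short and monotone on $X$ (shortness follows from shortness of $h$ and the $\ell^1$ metric on $X\otimes Y$, monotonicity from monotonicity of $h$ in the first variable, which holds because the order on $X\otimes Y$ is the product order), so $p\le p'$ gives $\int f\,dp \le \int f\,dp'$, i.e. $\int h\,d(p\otimes q) \le \int h\,d(p'\otimes q)$. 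Symmetrically, defining $g(y):=\int_X h(x,y)\,dp'(x)$, which is short and monotone on $Y$, the inequality $q\le q'$ gives $\int h\,d(p'\otimes q) \le \int h\,d(p'\otimes q')$. Chaining the two yields
\begin{equation*}
 \int h \, d\nabla(p,q) \le \int h \, d\nabla(p',q'),
\end{equation*}
and since $h$ was an arbitrary short monotone function, Theorem~\ref{Ldual} concludes that $\nabla(p,q) \le \nabla(p',q')$.

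The only genuine obstacle is the verification that the partial integrals $f$ and $g$ are again short and monotone, since this is what lets us feed them back into the one-variable dual characterization; this is a routine consequence of the $\ell^1$ structure on the monoidal product and the fact that the product order is taken coordinatewise, but it is the crux that makes the interpolation legitimate. Everything else is bookkeeping with Fubini and the already-established $L$-orderedness of the spaces involved.
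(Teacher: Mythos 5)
Your proposal is correct and follows essentially the same route as the paper: both proofs test the order on $P(X\otimes Y)$ against short monotone functions via Theorem~\ref{Ldual}, and both interpolate one marginal at a time using Fubini, relying on the fact that the partial integral of a short monotone function on $X\otimes Y$ against a probability measure is again short and monotone (the paper swaps the two interpolation steps, which is immaterial by symmetry). Your explicit verification of shortness of the partial integrals is a point the paper leaves implicit, but it is the same argument.
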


\begin{proof}
 First of all, let $f:X\otimes Y\to\R$ be monotone, and let $p\in PX$. Then the function
 \begin{equation}
 \left(  \int_X f(x,-)\,dp(x) \right) : Y\to \R
 \end{equation}
 is monotone as well.

 Suppose now that $p\le p'$ in $PX$ and $q\le q'$ in $PY$. Let $f:X\otimes Y\to \R$ be monotone. Then using the remark above and its counterpart for $X \to \R$,
 \begin{align*}
  \int_{X\otimes Y} f(x,y)\,d(p\otimes q)(x,y) &= \int_Y \left( \int_X f(x,y)\,dp(x) \right)\,dq(y) \\
  &\le \int_Y \left( \int_X f(x,y)\,dp(x) \right)\,dq'(y) \\
  &=  \int_X \left( \int_Y f(x,y)\,dq'(y) \right)\,dp(x) \\
  &\le \int_X \left( \int_Y f(x,y)\,dq'(y) \right)\,dp'(x) \\
  &= \int_{X\otimes Y} f(x,y)\,d(p'\otimes q')(x,y) .
 \end{align*} \qedhere
\end{proof}

\begin{lemma}\label{deltasm}
 Let $X,Y\in \lcomet$. Then $\Delta:P(X\otimes Y) \to PX\otimes PY$ is monotone.
\end{lemma}

\begin{proof}
Suppose that $p\le q$ in $P(X\otimes Y)$. In order to prove the claim, we then need to show that $p_X \le q_X$ for the marginal on $X$, and similarly for $Y$, which works analogously.

So let $f:X\to\R$ be monotone. Upon composing with the projection, it is also monotone as a function $X\otimes Y\to\R$.
This means that
\begin{align*}
 \int_{X\otimes Y} f(x) \,dp(x,y) \le \int_{X\otimes Y} f(x) \,dq(x,y) ,
\end{align*}
but we can rewrite both terms as
\begin{align*}
 \int_{X} f(x) \,dp_X(x) \le \int_{X} f(x) \,dq_X(x) ,
\end{align*}
and therefore $p_X\le q_X$. 
\end{proof}

Together with the results of \cite[Section 5]{ours_bimonoidal}, we get as a corollary:

\begin{thm}\label{bimonord}
 $P$ is a symmetric bimonoidal monad on $\lcomet$.
\end{thm}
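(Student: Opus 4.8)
The plan is to transport the entire symmetric bimonoidal monad structure from $\cat{CMet}$ to $\lcomet$ along the forgetful functor $U : \lcomet \to \cat{CMet}$, exploiting the fact that $U$ is faithful and strict monoidal (the monoidal product on $\lcomet$ is the same underlying metric space as in $\cat{CMet}$, equipped with the product order, so $U(X \otimes Y) = U(X) \otimes U(Y)$). By \cite[Section~5]{ours_bimonoidal}, the functor $P$ on $\cat{CMet}$ carries a symmetric bimonoidal monad structure, consisting of the product transformation $\nabla$, the marginal transformation $\Delta$, the monad unit $\delta$ and multiplication $E$, together with the evident unit maps relating the monoidal unit $I$ and $PI$, all subject to the coherence axioms of a symmetric bimonoidal monad. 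Since these are exactly the same underlying maps we wish to use on $\lcomet$, the only thing to verify is that each piece of this structure lives in $\lcomet$, after which the axioms transport automatically.

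First I would check that $P$ restricts to an endofunctor of $\lcomet$: on objects this is Corollary~\ref{PpreservesLorder}, and on morphisms it is Proposition~\ref{Pfmonotone}. Next, I would confirm that $\lcomet$ is a symmetric monoidal subcategory, i.e.\ that $X \otimes Y$ is L-ordered whenever $X$ and $Y$ are (noted at the start of this subsection), and that $I$ together with the associativity, unit, and symmetry isomorphisms cause no trouble, being order-isomorphisms. Then I would verify monotonicity of every structure transformation in turn: $\delta$ and $E$ are monotone by Proposition~\ref{Pmonadmonotone}, $\nabla$ is monotone by Lemma~\ref{nablasm}, and $\Delta$ is monotone by Lemma~\ref{deltasm}; the unit maps between $I$ and $PI$ are monotone trivially, since they are maps between one-point spaces. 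This shows that every component of the structure is a genuine morphism of $\lcomet$.

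It remains to check the coherence axioms: the lax and oplax monoidal functor axioms, the compatibility of $\delta$ and $E$ with both $\nabla$ and $\Delta$, and the bimonoidal interchange conditions. But these are all equations between composites of the maps listed above, and each such equation already holds in $\cat{CMet}$ by \cite[Section~5]{ours_bimonoidal}. Because $U$ is faithful, it reflects equalities of morphisms: if two morphisms of $\lcomet$ agree after applying $U$, they already agree. Hence every coherence diagram that commutes in $\cat{CMet}$ commutes in $\lcomet$, and the structure assembles into a symmetric bimonoidal monad.

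The work here is organizational rather than analytic: the real content — the monotonicity of $\nabla$, $\Delta$, $\delta$, and $E$ — has already been established in the preceding lemmas and propositions. The only mild obstacle is bookkeeping: one must enumerate precisely which natural transformations and axioms constitute a symmetric bimonoidal monad in the sense of \cite{ours_bimonoidal} and make sure no structure map has been overlooked, in particular the monoidal-unit data relating $I$ and $PI$. Once this checklist is complete, faithfulness of $U$ supplies every coherence condition at once.
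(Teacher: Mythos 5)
Your proof is correct and takes essentially the same approach as the paper: the paper likewise reduces the theorem to monotonicity of the structure maps ($\nabla$ by Lemma~\ref{nablasm}, $\Delta$ by Lemma~\ref{deltasm}, together with the already-established monotonicity of $\delta$, $E$, and $Pf$) and then invokes the bimonoidal structure on $\cat{CMet}$ from \cite[Section~5]{ours_bimonoidal}, with faithfulness of the forgetful functor supplying all coherence axioms for free.
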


This in particular implies (see~\cite[Proposition~4.1]{ours_bimonoidal}) that $\Delta\circ\nabla=\id$, and therefore $\nabla$, as the formation of product distributions, is an order embedding in addition to being a metric embedding.

\section{Algebras of the ordered Kantorovich monad}\label{secordalg}

\subsection{Algebras as subsets of ordered Banach spaces}

We previously showed that the category of algebras of the Kantorovich monad on $\cat{CMet}$ is equivalent to the category of closed convex subsets of Banach spaces \cite[Section 5]{ours_kantorovich}. The algebra map $e : PA \to A$ maps every probability measure to its barycenter,
$$
p \longmapsto \int a \, dp(a)
$$
and the morphisms of algebras are the short affine maps, i.e.~the maps which commute with integration. In this section, we extend this result to the category of algebras of the ordered Kantorovich monad $P$ on $\lcomet$, by showing that it is equivalent to the category of closed convex subsets of \emph{ordered} Banach spaces, for which the algebra map $e : PA \to A$ is monotone; and the morphisms of algebras are then just the \emph{monotone} short affine maps. This is analogous to what happens with ordered algebras of the Radon monad~\cite{keimel}.

The arguments required to prove this result are surprisingly complicated, and we will see that the L-orderedness is a necessary and sufficient condition for an embedding into an ordered Banach space to exist.

\begin{lemma}\label{justbinary}
Let $A\in\lcomet$ be an algebra of the unordered Kantorovich monad via an algebra map $e : PA \to A$. Then $e$ is monotone if and only if for all $a,b,c\in A$ and $\lambda\in [0,1]$,
\[
 a \le b \quad\Rightarrow\quad e(\lambda \delta_a + (1-\lambda) \delta_c) \;\le\; e(\lambda \delta_b + (1-\lambda) \delta_c).
\]
\end{lemma}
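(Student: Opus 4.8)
The plan is to prove the nontrivial direction, namely that the displayed binary condition implies full monotonicity of $e$. The forward direction is immediate: if $e$ is monotone, then since $a\le b$ gives $\lambda\delta_a+(1-\lambda)\delta_c \le \lambda\delta_b+(1-\lambda)\delta_c$ in the stochastic order on $PA$ (one can exhibit an explicit coupling supported on $\{\le\}$, pairing the $\delta_a$-part with the $\delta_b$-part and the $\delta_c$-part with itself on the diagonal), applying $e$ preserves the inequality. So the content is entirely in the converse.

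For the converse, the strategy is to reduce an arbitrary stochastic-order inequality $p\le q$ in $PA$ to the binary case via the density and approximation machinery already developed. First I would invoke Proposition~\ref{orderdensity}: given $p\le q$ in $PA$, there are uniform empirical distributions $\bar p_j = i(\mathbf{x}^{(j)})$ and $\bar q_j = i(\mathbf{y}^{(j)})$ with $\bar p_j \to p$, $\bar q_j \to q$, and $\bar p_j \le \bar q_j$ in $X^{N_j}$. By Corollary~\ref{splittingcor}, after relabeling by a permutation we may assume these are empirical distributions of tuples $(x_n)_{n\in N_j}$ and $(y_n)_{n\in N_j}$ with $x_n \le y_n$ for every $n$. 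Since $e$ is short (being an algebra map for the unordered monad), it is continuous, so it suffices to show $e(\bar p_j) \le e(\bar q_j)$ for each $j$ and then pass to the limit using closedness of the order on $A$.

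Thus the problem reduces to a finite statement: if $x_n \le y_n$ for all $n$ in a finite index set $N$, then $e\big(\tfrac{1}{|N|}\sum_n \delta_{x_n}\big) \le e\big(\tfrac{1}{|N|}\sum_n \delta_{y_n}\big)$. The plan here is to interpolate one coordinate at a time, changing $x_n$ to $y_n$ for a single $n$ while holding the others fixed, and to chain these steps by transitivity. For a single such step, I would use the algebra axioms to rewrite the relevant empirical distribution as a two-point mixture of the form $\lambda\delta_a + (1-\lambda)\delta_c$: concretely, $\tfrac{1}{|N|}\sum_m \delta_{z_m}$ can be written, via the associativity/unit laws of the algebra $e$ together with the fact that $e$ computes barycenters of finitely supported measures, as $e$ applied to a mixture where $a = x_n$ (resp.\ $y_n$) carries weight $\lambda = 1/|N|$ and $c$ is the barycenter $e$ of the remaining mass, carried with weight $1-\lambda$. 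The binary hypothesis applied with this $a$, $b=y_n$, and common $c$ then yields the single-coordinate inequality.

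The main obstacle I anticipate is the bookkeeping in this last reduction: one must justify rewriting a finitely supported empirical distribution as an honest two-point convex combination to which the binary hypothesis applies, and this requires using the algebra structure maps carefully (the compatibility $e\circ E = e\circ Pe$ and $e\circ\delta = \id$) rather than merely manipulating measures formally. The subtlety is that the binary condition is stated for the specific two-point mixtures $\lambda\delta_a+(1-\lambda)\delta_c$, so one must genuinely collapse the ``frozen'' coordinates into a single point $c = e(\cdot)$ using the algebra laws, and verify that after collapsing, the resulting inequality $e(\lambda\delta_{x_n}+(1-\lambda)\delta_c)\le e(\lambda\delta_{y_n}+(1-\lambda)\delta_c)$ re-expands, again via the algebra laws, to the desired single-coordinate-swap inequality between the two empirical distributions. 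Once this one-coordinate step is secured, the full finite inequality follows by chaining at most $|N|$ such steps through transitivity, and the general case follows by the density-and-limit argument above.
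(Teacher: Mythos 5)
Your proposal is correct and follows essentially the same route as the paper's proof: reduce via Proposition~\ref{orderdensity} and Corollary~\ref{splittingcor} to coordinatewise-ordered empirical distributions, then swap one coordinate at a time and chain the resulting inequalities by transitivity, passing to the limit by continuity of $e$ and closedness of the order. The only difference is that you spell out the collapsing of the frozen coordinates to their barycenter via the algebra laws ($e\circ E = e\circ Pe$, $e\circ\delta=\id$), a step the paper treats as immediate when it calls each single-swap inequality ``an instance of the assumption.''
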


This result is the ordered analogue of the equivalence of (a) and (d) in~\cite[Theorem~5.2.1]{ours_kantorovich}. The condition is the defining property of an \emph{ordered barycentric algebra} \cite{keimel}. 

\begin{proof}
The assumption $a \le b$ implies $\lambda \delta_a + (1-\lambda) \delta_c \le \lambda \delta_b + (1-\lambda) \delta_c$ in $PA$. So if $e$ is monotone, then the conclusion follows.

Conversely, suppose that the above implication holds. In order to prove that $e$ is monotone, the density result of Proposition~\ref{orderdensity} shows that it is enough to prove $e(i_N(x_n)) \le e(i_N(y_n))$ for $(x_n),(y_n)\in X^N$ with $i(x_n) \le i(y_n)$. By Corollary~\ref{splittingcor}, we can relabel $(y_n)$ by a permutation such that $x_n \leq y_n$ for every $n\in N$. Writing $N = \{1,\ldots,|N|\}$, we therefore have
\[
	e\left( \frac{1}{|N|} \sum_{i=1}^{k  } \delta_{x_i} + \frac{1}{|N|} \sum_{i=k+1}^{|N|} \delta_{y_i} \right) \le
	e\left( \frac{1}{|N|} \sum_{i=1}^{k-1} \delta_{x_i} + \frac{1}{|N|} \sum_{i=k  }^{|N|} \delta_{y_i} \right)
\]
as an instance of the assumption, for every $k=1,\ldots,|N|$. Chaining all these inequalities results in the claimed $e(i_N(x_n)) \le e(i_N(y_n))$.
\end{proof}

So if we represent $A$ as a closed convex subset of a Banach space, then $e$ is monotone if and only if
\begin{equation}
 a \le b \quad\Rightarrow\quad \lambda\,a + (1-\lambda)\,c \;\le\; \lambda\,b + (1-\lambda)\,c
\end{equation}
holds for all $a,b,c\in A$ and $\lambda\in[0,1]$, since the right-hand side is exactly $e(\lambda \delta_a + (1-\lambda) \delta_b) \leq e(\lambda \delta_b + (1-\lambda) \delta_c)$. 

We will prove in~\ref{strictlymonotone} that when the map $e$ is monotone, then it is even \emph{strictly} monotone. 

\begin{deph}
 An \emph{ordered Banach space} is a Banach space equipped with a closed convex cone called the \emph{positive cone}.
\end{deph}

As usual, two elements of the Banach space are ordered if and only if their difference is in the positive cone.

We already know that every closed convex subset $A$ of an ordered Banach space is a $P$-algebra in $\cat{CMet}$, with the structure map given by integration, and we know that integration is monotone thanks to Lemma~\ref{justbinary}. So in order for $A$ to be a $P$-algebra in $\lcomet$, what remains to be checked is that $A$ is indeed an object of $\lcomet$, i.e.~that it is L-ordered. This is guaranteed by the Hahn-Banach theorem, which even shows that we can test the order using only \emph{linear} short monotone maps:

\begin{prop}\label{applyhahnbanach}
 Let $B$ be an ordered Banach space. Let $a,b\in B$. Then $a\le b$ if and only if for every short monotone \emph{linear} functional $h:B\to\R$, $h(a)\le h(b)$. 
\end{prop}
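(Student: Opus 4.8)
The plan is to prove the two directions separately, with the nontrivial content concentrated in the ``if'' direction, where Hahn--Banach separation does the work. Let $C\subseteq B$ denote the positive cone, so that by definition $a\le b$ means $b-a\in C$. First I would dispatch the ``only if'' direction, which needs no separation argument: if $a\le b$ then $b-a\in C$, and for any monotone linear $h:B\to\R$ we have $0\le b-a$, hence $0=h(0)\le h(b-a)=h(b)-h(a)$, giving $h(a)\le h(b)$. (Here monotone linear functionals are exactly those that are nonnegative on $C$.)

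For the ``if'' direction I would argue by contraposition. Suppose $a\nleq b$, i.e.\ $b-a\notin C$. Since $C$ is closed and convex and $\{b-a\}$ is a compact convex set disjoint from it, the Hahn--Banach separation theorem provides a continuous linear functional $\phi:B\to\R$ and a scalar $\alpha\in\R$ with
\begin{equation*}
 \phi(b-a) < \alpha \le \phi(c) \qquad \text{for all } c\in C.
\end{equation*}
Because $0\in C$ we get $\alpha\le 0$, and hence $\phi(b-a)<0$, that is $\phi(b)<\phi(a)$. It remains to upgrade $\phi$ to a \emph{short, monotone} linear functional, since only those are allowed in the statement.

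The monotonicity of $\phi$ is the one place where the \emph{cone} structure (rather than mere convexity) is essential: for $c\in C$ and any $t>0$ we have $tc\in C$, so $\alpha\le\phi(tc)=t\,\phi(c)$, and letting $t\to\infty$ forces $\phi(c)\ge 0$ for all $c\in C$; thus $\phi$ is monotone. Finally $\phi\ne 0$ since $\phi(b-a)<0$, so I would normalize by setting $h:=\phi/\|\phi\|$, which is short, monotone, linear, and still satisfies $h(a)>h(b)$. This contradicts the hypothesis, completing the contrapositive and hence the proof. The main obstacle is precisely this last paragraph: ensuring the separating functional can be chosen monotone and then rescaled to be short, both of which rely on the homogeneity of $C$ and the fact that shortness of a linear map is equivalent to having operator norm at most one.
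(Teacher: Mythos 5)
Your proof is correct and follows essentially the same route as the paper's: separate the point $b-a$ from the closed positive cone by Hahn--Banach, observe that the separating functional is nonnegative on the cone (hence monotone) and negative at $b-a$, then rescale to norm one to make it short. The only difference is that you spell out the standard cone-homogeneity argument (letting $t\to\infty$ to force $\phi(c)\ge 0$ and $\alpha\le 0$), which the paper leaves implicit in its invocation of the separation theorem.
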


\begin{proof}
The ``only if'' part is trivial by monotonicity of $h$, so we focus on the ``if'' direction. 

 We write $B^+$ for the positive cone of $B$. Suppose that $a\nleq b$.  
 This means that the point $v:= b - a$ does \emph{not} lie in $B^+$. Since $\{v\}\subseteq B$ is trivially compact and $B^+\subseteq B$ is closed and convex, the Hahn-Banach separation gives us a bounded linear functional $h: B\to\R$ such that
 \begin{enumerate}
  \item\label{hmonotone} $h(c) \ge 0$ for all $c\in B^+$, and
  \item\label{hseparates} $h(v)< 0$.
 \end{enumerate}
 Without loss of generality, we can rescale $h$ to norm one, so that it is short. Property~\ref{hmonotone} means exactly that $h$ is monotone.
 By linearity, property~\ref{hseparates} means exactly that $h(a) > h(b)$, which is enough.
\end{proof}

\begin{cor}
\label{oBan_Lorder}
 Every ordered Banach space is L-ordered. 
\end{cor}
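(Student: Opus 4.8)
The plan is to check directly the two-way equivalence demanded by Definition~\ref{defLorder} for an arbitrary ordered Banach space $B$ and points $a,b \in B$. One direction is free: if $a \le b$, then $f(a) \le f(b)$ for every monotone $f : B \to \R$ by the very meaning of monotonicity, and in particular for every \emph{short} monotone $f$. This implication holds in any ordered metric space, as already noted in the text immediately following Definition~\ref{defLorder}, so no work is needed there. All the substance lies in the converse.

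For the converse, I would assume that $f(a) \le f(b)$ for every short monotone function $f : B \to \R$ and deduce $a \le b$. The key observation is that the short monotone \emph{linear} functionals $h : B \to \R$ form a subclass of all short monotone functions on $B$. Hence the hypothesis applies in particular to every such $h$, giving $h(a) \le h(b)$ for all short monotone linear functionals. But this is exactly the premise of Proposition~\ref{applyhahnbanach}, which then yields $a \le b$. Combining the two directions shows that the two conditions of Definition~\ref{defLorder} are equivalent, i.e.\ $B$ is L-ordered.

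I expect no genuine obstacle here, because the entire difficulty has been front-loaded into Proposition~\ref{applyhahnbanach}, whose proof is the Hahn-Banach separation argument. Once that proposition is available, the corollary is almost formal: shrinking the class of test functions from all short monotone maps down to the linear ones only \emph{weakens} the hypothesis, so if the smaller (linear) class already detects the order—which is precisely what Proposition~\ref{applyhahnbanach} asserts—then a fortiori the larger class does too. It is worth remarking that the statement proved is in fact slightly stronger than bare L-orderedness: on an ordered Banach space the partial order is already recovered from the short monotone \emph{linear} functionals alone, and L-orderedness is the immediate consequence obtained by enlarging the test class.
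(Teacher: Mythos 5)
Your proof is correct and is essentially the paper's own argument: the corollary is stated immediately after Proposition~\ref{applyhahnbanach} precisely because it follows by restricting the test class to linear functionals, exactly as you do. Both directions are handled the same way (the forward one trivially, the converse via the Hahn--Banach separation packaged into Proposition~\ref{applyhahnbanach}), so there is nothing to add.
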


\begin{remark}\label{lordembed}
	Note that $L$-orderedness is preserved by passing to a subspace. Indeed, given a subset $S$ of an $L$-ordered space $X$, we can test the order on $S$ using in particular the restrictions to $S$ of the short, monotone maps on $X$.
	
	As a special case, subsets of ordered Banach spaces are $L$-ordered. In fact, e.g.~by applying the upcoming Theorem~\ref{poscone} to $PX$ and composing the embedding with the embedding $\delta : X \to PX$, it follows that the class of L-ordered metric spaces \emph{coincides} with the class of subsets of ordered Banach spaces up to isomorphism. 
\end{remark}

Getting back to $P$-algebras, we have now shown:

\begin{cor}
 Every closed convex subset of an ordered Banach space is a $P$-algebra in $\lcomet$. 
\end{cor}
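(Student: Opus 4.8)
The plan is to bootstrap from the unordered theory. We already know from~\cite[Section~5]{ours_kantorovich} that $A$, as a closed convex subset of a Banach space, is a $P$-algebra in $\cat{CMet}$ whose structure map $e : PA \to A$ is the barycenter map $p \mapsto \int a\,dp(a)$, and that $e$ is short. To promote this to an algebra in $\lcomet$ it suffices to verify two things: that $A$ is genuinely an object of $\lcomet$, and that $e$ is a morphism in $\lcomet$, i.e.\ that in addition to being short it is monotone. No new verification of the monad laws is then needed.

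First I would check that $A \in \lcomet$. As a closed subset of a Banach space, $A$ is complete. The ambient ordered Banach space is L-ordered by Corollary~\ref{oBan_Lorder}, and L-orderedness is inherited by subspaces (Remark~\ref{lordembed}), so $A$ is L-ordered; hence $A \in \lcomet$.

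The substantive point is the monotonicity of $e$, and here I would use Lemma~\ref{justbinary} to reduce the question to the single binary (ordered barycentric) condition. Writing $A \subseteq B$ for the ambient ordered Banach space with positive cone $B^+$, the order on $A$ is the restriction of the order on $B$, so $a \le b$ precisely when $b - a \in B^+$. For any $c \in A$ and $\lambda \in [0,1]$,
\[
  \big(\lambda b + (1-\lambda)c\big) - \big(\lambda a + (1-\lambda)c\big) = \lambda\,(b - a) \in B^+,
\]
since $B^+$ is a cone and therefore stable under multiplication by the nonnegative scalar $\lambda$. This gives $\lambda a + (1-\lambda)c \le \lambda b + (1-\lambda)c$, which is exactly the hypothesis of Lemma~\ref{justbinary}; hence $e$ is monotone.

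Thus $e$ is short and monotone, so it is a morphism in $\lcomet$, and since the forgetful functor $\lcomet \to \cat{CMet}$ is faithful, the unit and multiplication diagrams of the algebra commute in $\lcomet$ because they already do in $\cat{CMet}$. Therefore $A$ is a $P$-algebra in $\lcomet$. I do not expect a genuine obstacle: the only nontrivial step is the monotonicity of $e$, and Lemma~\ref{justbinary} collapses it to the elementary cone computation above.
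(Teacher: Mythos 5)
Your proposal is correct and follows essentially the same route as the paper: it invokes the unordered result of~\cite[Section~5]{ours_kantorovich}, establishes $A\in\lcomet$ via Corollary~\ref{oBan_Lorder} and Remark~\ref{lordembed}, and obtains monotonicity of $e$ from Lemma~\ref{justbinary} through the binary convex-combination condition, which the paper likewise notes holds for subsets of ordered Banach spaces (your explicit cone computation $\lambda(b-a)\in B^+$ just spells out what the paper leaves implicit).
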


Here is the converse statement:

\begin{thm}\label{poscone}
 Every $P$-algebra in $\lcomet$ is isomorphic to a closed convex subset of an ordered Banach space.
\end{thm}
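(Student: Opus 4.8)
The plan is to build on the unordered classification and then manufacture a positive cone on the ambient Banach space. By \cite{ours_kantorovich}, forgetting the order, $A$ is isomorphic as an algebra of the unordered Kantorovich monad to a closed convex subset of a Banach space $V$, with structure map the barycenter $e(p)=\int a\,dp$; I would fix one such affine isometric embedding $A\hookrightarrow V$ and work inside $V$. The additional input is the order on $A$, which by assumption makes $e$ monotone, equivalently (Lemma~\ref{justbinary}) satisfies the ordered barycentric law $a\le b\Rightarrow \lambda a+(1-\lambda)c\le\lambda b+(1-\lambda)c$, and which is L-ordered.

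The natural candidate for the positive cone is the closed convex cone generated by the positive differences,
\begin{equation*}
 V^{+}:=\overline{\mathrm{cone}}\,\{\,b-a \;:\; a,b\in A,\ a\le b\,\}.
\end{equation*}
This is a closed convex cone, so $(V,V^{+})$ is an ordered Banach space, and by Corollary~\ref{oBan_Lorder} it is L-ordered. The inclusion $A\hookrightarrow V$ is monotone by construction: $a\le b$ forces $b-a\in V^{+}$. Since this inclusion is already an affine isometry onto a closed convex subset intertwining the barycenters, the whole theorem reduces to the single claim that it also \emph{reflects} the order, namely that $a,b\in A$ and $b-a\in V^{+}$ together imply $a\le b$.

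I expect this order-reflection step to be the crux, and it admits two revealing reformulations. First, since $V^{+}$ is a closed convex cone, the bipolar theorem rewrites $b-a\in V^{+}$ as the condition $h(a)\le h(b)$ for every continuous linear functional $h$ on $V$ that is nonnegative on $V^{+}$; after rescaling, such $h$ correspond exactly to affine short monotone maps $A\to\R$. Thus order-reflection is equivalent to the assertion that \emph{affine} short monotone functions already detect the order on $A$, whereas L-orderedness only guarantees detection by \emph{arbitrary} short monotone functions, so the content is the passage from general to affine separators. Second, descending along the barycenter through $PA\to V$ shows that $b-a\in V^{+}$ holds precisely when there exist $p\le q$ in $PA$ (stochastic order, characterized by Theorem~\ref{Ldual}) with $e(q)-e(p)=b-a$; applying monotonicity of $e$ together with the ordered barycentric law then collapses the desired implication to the \emph{cancellation} law $\tfrac12 c+\tfrac12 a\le\tfrac12 c+\tfrac12 b\Rightarrow a\le b$.

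The reason I expect this to be the hard part is that cancellation cannot hold for soft reasons: undoing an averaging with $c$ is an expansion by a factor two, so no short function precomposed with an affine rescaling can recover a short monotone $f$ at $a,b$ from its values at $\tfrac12 c+\tfrac12 a$ and $\tfrac12 c+\tfrac12 b$, and the naive contraction argument only yields $f(a)\le f(b)+\tfrac12\bigl(d(a,c)+d(b,c)\bigr)$, which does not close. The resolution must therefore use the Banach geometry essentially. Concretely, I would combine L-orderedness with a Hahn--Banach separation in the style of Proposition~\ref{preantisymmetry}: given $a\not\le b$, separate $a$ from the closed convex order interval $\downarrow b=\{x\in A : x\le b\}$ (convex exactly by the ordered barycentric law) by a continuous linear functional, the genuine difficulty being to arrange that this functional is simultaneously short, affine, and nonnegative on $V^{+}$, i.e.\ monotone on $A$. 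Once order-reflection is secured, $A\hookrightarrow(V,V^{+})$ both preserves and reflects the order, hence is an isomorphism of ordered $P$-algebras onto a closed convex subset of the ordered Banach space $(V,V^{+})$; antisymmetry of the induced order is already guaranteed by Corollary~\ref{antisymmetry}.
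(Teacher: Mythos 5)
Your setup coincides with the paper's: embed $A$ into a Banach space $V$ via the unordered classification, order $V$ by the closed cone generated by positive differences, and reduce everything to order-reflection. But your proposal leaves that crux unproven, and your diagnosis of where the difficulty sits is inverted. The cancellation law $\tfrac12 c+\tfrac12 a\le\tfrac12 c+\tfrac12 b\Rightarrow a\le b$ is in fact the \emph{soft} part: it follows from monotone mixing and closedness of the order alone, with no functional analysis. From $\tfrac12 a+\tfrac12 c\le\tfrac12 b+\tfrac12 c$ one obtains, by chaining $2^n-1$ applications of monotone mixing of this hypothesis against suitable convex combinations of $a$ and $b$, the relation $(1-2^{-n})a+2^{-n}c\le(1-2^{-n})b+2^{-n}c$; letting $n\to\infty$ and using closedness of the order gives $a\le b$. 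This is precisely what the paper imports from the theory of ordered barycentric algebras (\cite[Corollary~4.2]{keimel}), so your claim that cancellation ``cannot hold for soft reasons'' is mistaken: your obstruction only rules out arguments that factor through short functions, not this order-theoretic iteration. Your reduction of the \emph{algebraic} (non-closed) cone case to cancellation is sound and mirrors the paper's first step.

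The genuine gap is the \emph{closure} in the definition of $V^{+}$. Your assertion that $b-a\in V^{+}$ holds ``precisely when'' there exist $p\le q$ in $PA$ with $e(q)-e(p)=b-a$ is unjustified in the only-if direction: membership in the closed cone only yields a sequence $\lambda_n(b_n-a_n)\to b-a$ with $a_n\le b_n$ and possibly unbounded $\lambda_n$ (recall the paper's $\ell^\infty$ example of a cone with ever wider aperture), and upgrading such a limit to an exact positive difference is \emph{equivalent} to the order-reflection you are trying to prove. Likewise your Hahn--Banach fallback is circular: by bipolarity, a short monotone \emph{affine} functional separating $a$ from the down-set of $b$ exists if and only if order-reflection holds, and L-orderedness only provides non-affine separators. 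The paper closes exactly this gap with Lemma~\ref{Ldistbound}, the scaling identity $d_L\big(\alpha x+(1-\alpha)z,\,\alpha y+(1-\alpha)z\big)=\alpha\,d_L(x,y)$ on a $P$-algebra (itself a nontrivial parallelogram-type argument). Writing $\alpha_n=\tfrac{1}{1+\lambda_n}$, that identity together with the triangle inequality, $d_L(a_n,b_n)=0$, and $d_L\le d$ gives
\begin{equation*}
 d_L(a,b)\;\le\;\tfrac{1}{\alpha_n}\,d\big(\alpha_n a+(1-\alpha_n)b_n,\;\alpha_n b+(1-\alpha_n)a_n\big)\;=\;\big\|\,a-b+\lambda_n(b_n-a_n)\big\|\;\longrightarrow\;0,
\end{equation*}
so $d_L(a,b)=0$, and then L-orderedness in the form of Proposition~\ref{dlproperties} yields $a\le b$. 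This use of the L-distance and its scaling property is the essential ingredient missing from your argument; without it (or a substitute) the order-reflection step, and hence the theorem, remains open.
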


The proof follows that of the analogous result for ordered barycentric algebras~\cite[Proposition 3.3]{keimel}. We also need the following technical result about the L-distance on a $P$-algebra.

\begin{lemma}\label{Ldistbound}
 Let $A$ be a $P$-algebra. Let $x,y,z\in A$ and $\alpha\in [0,1]$. Then
 \begin{equation}
  d_L\big( \alpha\,x + (1-\alpha)\, z, \alpha\,y + (1-\alpha)\, z \big) = \alpha\, d_L(x,y) .
 \end{equation}
\end{lemma}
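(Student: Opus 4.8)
Since $A$ is a $P$-algebra, the unordered theory of \cite{ours_kantorovich} identifies its underlying metric space with a closed convex subset of a Banach space, and $\alpha\,x+(1-\alpha)\,z$ denotes the barycenter $e\bigl(\alpha\,\delta_x+(1-\alpha)\,\delta_z\bigr)$. The plan is to write $\phi$ for the affine self-map $\phi(a):=\alpha\,a+(1-\alpha)\,z$ of $A$ and prove the two inequalities separately; the case $\alpha=0$ is trivial, since both sides vanish because $d_L(z,z)=0$ by Proposition~\ref{dlproperties}, so I may assume $\alpha\in(0,1]$. Two properties of $\phi$ will be used throughout: it is $\alpha$-Lipschitz, because $\|\phi(a)-\phi(a')\|=\alpha\,\|a-a'\|$ in the ambient Banach space, and it is monotone, because the monotonicity of $e$ is exactly the implication recorded after Lemma~\ref{justbinary}, namely that $a\le a'$ forces $\alpha\,a+(1-\alpha)\,z\le\alpha\,a'+(1-\alpha)\,z$.

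The inequality ``$\le$'' is the clean direction. For any short monotone $g:A\to\R$, the composite $g\circ\phi$ is monotone and $\alpha$-Lipschitz, so $\tfrac1\alpha\,g\circ\phi$ is again short and monotone. Applying Definition~\ref{defLdist} to this function at $x$ and $y$ gives $\tfrac1\alpha\bigl(g(\phi(x))-g(\phi(y))\bigr)\le d_L(x,y)$, that is, $g(\phi(x))-g(\phi(y))\le\alpha\,d_L(x,y)$. Taking the supremum over all short monotone $g$ yields $d_L(\phi(x),\phi(y))\le\alpha\,d_L(x,y)$. This is simply the statement that an $\alpha$-Lipschitz monotone map contracts $d_L$ by the factor $\alpha$, a quantitative refinement of Remark~\ref{smL}.

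The inequality ``$\ge$'' is the crux, and is where the argument becomes delicate. The naive strategy -- take a near-optimal short monotone $g$ for $d_L(x,y)$ and transport it through $\phi$ -- fails, because $\phi$ admits no short monotone inverse; and trying to extend the partially defined function $\phi(w)\mapsto\alpha\,g(w)$ from the affine slice $\phi(A)$ to a short monotone function on all of $A$, by a monotone (Lawvere-metric) McShane extension built from $d_L$, requires exactly that $\alpha\bigl(g(w)-g(w')\bigr)\le d_L(\phi(w),\phi(w'))$ for all $w,w'$, which is the very inequality we are after. The way to break this circularity is the observation that for an \emph{affine} short monotone $g$ one has the exact identity $g(\phi(x))-g(\phi(y))=\alpha\bigl(g(x)-g(y)\bigr)$, so that such a $g$ immediately witnesses $d_L(\phi(x),\phi(y))\ge\alpha\bigl(g(x)-g(y)\bigr)$. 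It therefore suffices to show that the supremum defining $d_L(x,y)$ can be approached using \emph{affine} short monotone functions on $A$, and I expect this affine sufficiency to be the main obstacle: it is an intrinsic Hahn--Banach-type separation statement for the $P$-algebra $A$, to be carried out directly on the convex subset $A$ of the ambient Banach space together with its closed order -- in the spirit of Proposition~\ref{applyhahnbanach}, but without yet invoking the cone representation of Theorem~\ref{poscone}, so as to avoid circularity. Once affine short monotone functions are shown to be sufficient, the two inequalities combine to the claimed equality.
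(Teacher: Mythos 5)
Your ``$\le$'' direction is correct and is essentially the paper's own argument: the map $\phi(a)=\alpha\,a+(1-\alpha)\,z=e\big(\alpha\,\delta_a+(1-\alpha)\,\delta_z\big)$ is monotone and $\alpha$-Lipschitz, and rescaling a test function by $1/\alpha$ yields $d_L\big(\phi(x),\phi(y)\big)\le\alpha\,d_L(x,y)$, which is the paper's inequality~\eqref{dL_decreasing}. The genuine gap is in the ``$\ge$'' direction: you reduce it to the claim that the supremum defining $d_L(x,y)$ can be approached by \emph{affine} short monotone functions on $A$, and then you leave that claim unproven (``I expect this affine sufficiency to be the main obstacle''). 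That claim is the entire crux, and it is not a routine separation argument: Proposition~\ref{applyhahnbanach} separates a point from a \emph{closed} cone in an ambient Banach space, whereas here the order is given only on the convex set $A$, and the cone of differences $\lambda(y_+-y_-)$ that it generates in the ambient space need not be closed. Closing that cone without creating new order relations on $A$ is precisely the content of Theorem~\ref{poscone}, whose proof \emph{uses} Lemma~\ref{Ldistbound}; so the only available route to your affine sufficiency statement runs through results that depend on the very lemma you are proving. In short, the proposal establishes one inequality and replaces the other by an open sub-problem that is at least as hard as the lemma itself, with a serious risk of circularity.

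The paper avoids functional-analytic separation entirely and argues intrinsically with $d_L$. From the contraction inequality it first specializes $z=y$ and $z=x$ and swaps $\alpha\leftrightarrow 1-\alpha$, obtaining $d_L\big(x,\alpha\,x+(1-\alpha)\,y\big)\le(1-\alpha)\,d_L(x,y)$ and $d_L\big(\alpha\,x+(1-\alpha)\,y,\,y\big)\le\alpha\,d_L(x,y)$; the triangle inequality $d_L(x,y)\le d_L\big(x,\alpha\,x+(1-\alpha)\,y\big)+d_L\big(\alpha\,x+(1-\alpha)\,y,\,y\big)$ then forces both to be \emph{equalities}, i.e.\ $d_L$ is affine on line segments. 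For a general third point $z$, writing $x_\alpha=\alpha\,x+(1-\alpha)\,z$ and $y_\alpha=\alpha\,y+(1-\alpha)\,z$, it introduces $y'=\alpha\,y+(1-\alpha)\,x$, so that $d_L(x,y')=\alpha\,d_L(x,y)$ by the previous step, and proves the translation invariance $d_L(x_\alpha,y_\alpha)=d_L(x,y')$ across the parallelogram $x,x_\alpha,y_\alpha,y'$ by a limiting argument: with $x_{\e}=\e\,x_\alpha+(1-\e)\,x$, $y_{\e}=\e\,y_\alpha+(1-\e)\,y'$ and $k_{\e}=\e\,y_\alpha+(1-\e)\,x$ lying on the segment from $x_{\e}$ to $y_{\e}$, the contraction inequality together with affineness on lines gives $d_L(x_{\e},y_{\e})\le d_L(x_\alpha,y_\alpha)$ and $d_L(x_{\e},y_{\e})\le d_L(x,y')$, and lower semicontinuity of $d_L$ (Proposition~\ref{dlproperties}) squeezes out the equality in the limits $\e\to0$ and $\e\to1$. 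If you want to complete your write-up, the viable fix is to adopt this intrinsic argument; your affine sufficiency statement does become true once Theorem~\ref{poscone} is available, but it cannot be used as an ingredient in the proof of this lemma.
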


\begin{proof}[Proof of Lemma~\ref{Ldistbound}]
 The proof works along the lines of~\cite[Lemma~8]{capraro}.
 We know by Remark~\ref{smL} that since $e$ is short and monotone, the composition
 $$
 x \mapsto \alpha\,x + (1-\alpha)\,z = e( \alpha\,\delta_x + (1-\alpha)\,\delta_z) 
 $$
 is short and monotone as well, so that we have
 \begin{equation}
 \label{dL_decreasing}
  d_L\big( \alpha\,x + (1-\alpha)\, z, \alpha\,y + (1-\alpha)\, z \big) \le \alpha\, d_L(x,y) .
 \end{equation}
 Now by setting $z=y$, we get that 
 \begin{equation*}
  d_L\big( \alpha\,x + (1-\alpha)\, y, y \big) \le \alpha\, d_L(x,y) ,
 \end{equation*}
 and by setting instead $z=x$, we get 
 \begin{equation*}
  d_L\big( x, \alpha\,y + (1-\alpha)\, x \big) \le \alpha\, d_L(x,y) .
 \end{equation*}
 By swapping $\alpha$ and $(1-\alpha)$ we also get 
 \begin{equation*}
  d_L\big( x, \alpha\,x + (1-\alpha)\, y \big) \le (1-\alpha)\, d_L(x,y) ,
 \end{equation*}
 which we will use below.
 Now, by the triangle inequality,
 \begin{align*}
  d_L(x,y) &\le d_L\big(x, \alpha\,x + (1-\alpha)\, y\big) +  d_L\big( \alpha\,x + (1-\alpha)\, y, y \big) \\
  &\le d_L\big(x, \alpha\,x + (1-\alpha)\, y\big) +  \alpha\,d_L\big( x , y \big) \\
  &\le (1-\alpha)\, d_L\big(x , y\big) +  \alpha\,d_L\big( x , y \big) = d_L(x,y),
 \end{align*}
 so that all three inequalities must be equalities. In particular, $d_L\big( \alpha\,x + (1-\alpha)\, y, y \big) = \alpha\,d_L\big( x , y \big)$, and $d_L\big(x, \alpha\,x + (1-\alpha)\, y\big) =  (1-\alpha)\, d_L\big(x , y\big)$. The function $d_L$ is therefore ``affine on lines'', or ``longitudinally translation-invariant''.

Denoting $\alpha\,x + (1-\alpha)\, z$ by $x_\alpha$ and $\alpha\,y + (1-\alpha)\, z$ by $y_\alpha$, we obtain the situation
 \begin{center}
 \begin{tikzpicture}[baseline=(current  bounding  box.center),>=stealth]
  \node[bullet,label=left:$x$] (x) at (0,0) {};
  \node[bullet,label=above:$y$] (y) at (2,4) {};
  \node[bullet,label=right:$z$] (z) at (6,0) {};
  \draw[-] (x) -- (y);
  \draw[-] (y) -- (z);
  \draw[-] (x) -- (z);
  \node[bullet,label=below:$x_\alpha$] (xa) at (1.5,0) {};
  \node[bullet,label=right:$y_\alpha$] (ya) at (3,3) {};
  \draw[-,dashed] (xa) -- (ya);
 \end{tikzpicture}
 \end{center} 
 What we have shown above is $d_L(x_\alpha,z) = \alpha\,d_L(x,z)$, and likewise $d_L(y_\alpha,z) = \alpha\,d_L(y,z)$. We have to prove that $d_L(x_\alpha,y_\alpha)=\alpha\,d_L(x,y)$. Consider now the point $y':=\alpha\,y + (1-\alpha)\, x$, which forms a parallelogram with $x,x_\alpha$ and $y_\alpha$:
 \begin{center}
 \begin{tikzpicture}[baseline=(current  bounding  box.center),>=stealth]
  \node[bullet,label=left:$x$] (x) at (0,0) {};
  \node[bullet,label=above:$y$] (y) at (2,4) {};
  \node[bullet,label=right:$z$] (z) at (6,0) {};
  \draw[-] (x) -- (y);
  \draw[-] (y) -- (z);
  \draw[-] (x) -- (z);
  \node[bullet,label=below:$x_\alpha$] (xa) at (1.5,0) {};
  \node[bullet,label=right:$y_\alpha$] (ya) at (3,3) {};
  \draw[-,dashed] (xa) -- (ya);
  \node[bullet,label=left:$y'$] (y') at (1.5,3) {};
  \draw[-,dashed] (y') -- (ya);
 \end{tikzpicture}
 \end{center} 
 If we proved that $d_L$ is translation invariant in the sense that $d_L(x_\alpha,y_\alpha) = d_L(x,y')$, then we would conclude that $d_L(x_\alpha,y_\alpha) = d_L(x,y')=\alpha\,d_L(x,y)$, thereby proving the claim.
 
 Now for $\e\in(0,1)$, consider the points 
 $$
 x_{\e} := \e\,x_\alpha + (1-\e)\, x , \quad y_{\e} := \e\,y_\alpha + (1-\e)\, y',
 $$
 and 
 $$
 k_{\e} := (1-\e)\,x_{\e} + \e\,y_{\e} = \e\,y_\alpha + (1-\e)\,x,
 $$
 which can be illustrated as
 \begin{center}
 \begin{tikzpicture}[baseline=(current  bounding  box.center),>=stealth]
  \node[bullet,label=left:$x$] (x) at (0,0) {};
  \node[bullet,label=above:$y$] (y) at (2,4) {};
  \node[bullet,label=right:$z$] (z) at (6,0) {};
  \draw[-] (x) -- (y);
  \draw[-] (y) -- (z);
  \draw[-] (x) -- (z);
  \node[bullet,label=below:$x_\alpha$] (xa) at (1.5,0) {};
  \node[bullet,label=right:$y_\alpha$] (ya) at (3,3) {};
  \draw[-,dashed] (xa) -- (ya);
  \node[bullet,label=left:$y'$] (y') at (1.5,3) {};
  \draw[-,dashed] (y') -- (ya);
  \draw[-,dotted] (x) -- (ya);
  \node[bullet,label=above:$y_{\e}$] (xe) at (2,3) {};
  \node[bullet,label=below:$x_{\e}$] (ye) at (0.5,0) {};
  \draw[-,dotted] (xe) -- (ye);
  \node[bullet,label=right:$k_{\e}$] (ke) at (1,1) {};
 \end{tikzpicture}
 \end{center} 
 By~\eqref{dL_decreasing}, we have
 $$
 d_L(x_{\e},k_{\e}) = d_L\big( \e\,x_\alpha + (1-\e)\, x, \e\,y_\alpha + (1-\e)\,x \big) \le \e \,d_L(x_\alpha,y_\alpha) .
 $$
 Moreover, since $k_{\e}$ is on the line connecting $x_{\e}$ and $y_{\e}$, 
 $$
 d_L(x_{\e},k_{\e}) = \e\,d_L(x_{\e},y_{\e}). 
 $$
 Therefore, 
 $$
 d_L(x_{\e},y_{\e}) = \e^{-1}\, d_L(x_{\e},k_{\e}) \le  d_L(x_\alpha,y_\alpha).
 $$
 We now take the limit $\e\to 0$.
 Note that, since $d_L$ is lower semicontinuous, 
 $$
 d_L(x,y')\le d_L(x_{\e},y_{\e}) \le d_L(x_\alpha,y_\alpha).
 $$
 Analogously,  
 $$
 d_L(k_{\e},y_{\e}) = (1-\e) \, d_L(x_{\e},y_{\e}) ,
 $$
 therefore
 $$
 d_L(x_{\e},y_{\e}) = (1-\e)^{-1}\, d_L(k_{\e},y_{\e}) \le  d_L(x,y').
 $$
 We can now let $\e\to 1$, and again by lower semicontinuity,
 $$
 d_L(x_\alpha,y_\alpha) \le d_L(x_{\e},y_{\e})  \le d_L(x,y').
 $$
 So $d_L(x,y') = d_L(x_\alpha,y_\alpha)$, as was to be shown.
\end{proof}

We can now prove the theorem.

\begin{proof}[Proof of Theorem~\ref{poscone}]
By what we already know, it is enough to show that if $B$ is a Banach space and $A\subseteq B$ is a closed convex subset equipped with a an L-order, then we can equip $B$ itself with a closed partial order that restricts to the given order on $A$. So let $x\in B$ be considered positive if it is of the form $\lambda(y_+ - y_-)$ for $\lambda\geq 0$ and $y_+\geq y_-$ in $A$. Using the fact that taking convex combinations in $A$ is monotone, it is easy to see that this defines a convex cone. Taking $x\geq y$ if and only if $x-y$ is in the cone recovers the original order, since $x-y = \lambda(z_+ - z_-)$ for $z_+\geq z_-$ in $A$ and $\lambda > 0$ implies $\tfrac{1}{1+\lambda}x + \tfrac{\lambda}{1+\lambda}z_- = \tfrac{1}{1+\lambda}y + \tfrac{\lambda}{1+\lambda}z_+$. Together with $z_+\geq z_-$, we hence obtain $x\leq y$ from the general theory of ordered topological barycentric algebras~\cite[Corollary~4.2]{keimel}.

We cannot assume that the cone in $B$ defined this way is closed, so we take its closure. To check that the resulting embedding is still an order embedding, we have to show that the order of $A$ already contains all the order relations that are added by taking the closure of the cone.
In other words, we have to prove that whenever the sequence $\lambda_n (z_{+n} - z_{-n})$
for some $\lambda_n\ge 0$ and $z_{+n}\ge z_{-n}\in A$ tends to $y-x$, then already $x\le y$ in the order of $A$. So suppose that
$$
d\left( \lambda_n (z_{+n} - z_{-n}),  y-x \right) \to 0,
$$
or, rewriting everything in terms of only convex combinations (elements of $A$), using $\alpha_n := \frac{1}{1+\lambda_n}$,
\begin{equation}\label{distlimit}
 \dfrac{1}{\alpha_n} \, d\big( \alpha_n\,x + (1-\alpha_n)\,z_{+n}, \alpha_n\,y + (1-\alpha_n)\,z_{-n} \big) \to 0,
\end{equation}
Now consider the L-distance on $A$. We have from Lemma~\ref{Ldistbound} and the triangle inequality for $d_L$ that
\begin{align*}
 d_L(x,y) &= \dfrac{1}{\alpha_n} \, d_L\big( \alpha_n\,x + (1-\alpha_n)\,z_{+n}, \alpha_n\,y + (1-\alpha_n)\,z_{+n} \big) \\
 &\le \dfrac{1}{\alpha_n} \, d_L\big( \alpha_n\,x + (1-\alpha_n)\,z_{+n}, \alpha_n\,y + (1-\alpha_n)\,z_{-n} \big) \\
 &\quad + \dfrac{1}{\alpha_n} \, d_L\big( \alpha_n\,y + (1-\alpha_n)\,z_{-n}, \alpha_n\,y + (1-\alpha_n)\,z_{+n} \big) \\
 &= \dfrac{1}{\alpha_n} \, d_L\big( \alpha_n\,x + (1-\alpha_n)\,z_{+n}, \alpha_n\,y + (1-\alpha_n)\,z_{-n} \big) \\
 &\quad\quad\quad + \dfrac{1-\alpha_n}{\alpha_n} \, d_L\big( z_{-n}, z_{+n} \big) \\
 &= \dfrac{1}{\alpha_n} \, d_L\big( \alpha_n\,x + (1-\alpha_n)\,z_{+n}, \alpha_n\,y + (1-\alpha_n)\,z_{-n} \big) + 0,
\end{align*}
since $z_{-n}\le z_{+n}$. Since the L-distance on $A$ is bounded above by the usual distance, the expression above is bounded by the quantity~\eqref{distlimit}, which by assumption tends to zero, so necessarily $d_L(x,y)=0$. Since $A$ is L-ordered, then by Proposition~\ref{dlproperties} we have that $x\le y$. 
\end{proof}

In the unordered case, the morphisms of $P$-algebras are the short affine maps, i.e.\ the short maps which respect convex combinations. In the ordered case, they are additionally required to be monotone. Overall, we therefore have:

\begin{thm}
For $P$ the ordered Kantorovich monad on $\lcomet$, the category of $P$-algebras is equivalent to the category of closed convex subsets of ordered Banach spaces with short affine monotone maps.
\end{thm}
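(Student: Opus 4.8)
The plan is to package the results already obtained into an equivalence by exhibiting a single fully faithful, essentially surjective functor. Concretely, let $\Phi$ be the functor from the category of closed convex subsets of ordered Banach spaces (with short affine monotone maps) to the category of $P$-algebras in $\lcomet$, sending such a subset $A$ to the $P$-algebra whose structure map is integration (the barycenter map $e:PA\to A$) and acting as the identity on underlying maps. The Corollary preceding Theorem~\ref{poscone} guarantees that $\Phi(A)$ is indeed a $P$-algebra in $\lcomet$, so $\Phi$ is well-defined on objects; that it sends morphisms to algebra morphisms will follow from the morphism analysis below, at which point functoriality is automatic. It then suffices to show that $\Phi$ is essentially surjective and fully faithful.

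Essential surjectivity is exactly the content of Theorem~\ref{poscone}: every $P$-algebra in $\lcomet$ is isomorphic to a closed convex subset of an ordered Banach space. For full faithfulness, I would invoke the faithful forgetful functor $\lcomet\to\cat{CMet}$ together with the unordered equivalence of \cite{ours_kantorovich}. Under the latter, a short map between the underlying algebras is an algebra morphism---equivalently, commutes with integration---if and only if it is affine. Hence a $P$-algebra morphism $A\to A'$ in $\lcomet$ is short and affine (from the unordered characterization) and monotone (as a morphism of $\lcomet$); conversely, a short affine monotone map is a morphism of $\lcomet$ and commutes with integration, so it is a $P$-algebra morphism. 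Thus the $P$-algebra morphisms between $\Phi(A)$ and $\Phi(A')$ are precisely the short affine monotone maps $A\to A'$, which is both full faithfulness of $\Phi$ and the confirmation that $\Phi$ respects morphisms.

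The only genuinely delicate point---and the one place where L-orderedness is used---is matching the two notions of monotonicity. On the algebra side ``monotone'' refers to the intrinsic order of the $P$-algebra, whereas on the Banach-space side it refers to the order induced by the positive cone, restricted to the subset. These agree because the representation of Theorem~\ref{poscone} is an \emph{order} embedding: the L-order on $A$ coincides with the restriction of the cone order. Once this is in hand, a map is monotone for one order if and only if it is monotone for the other, and the morphism characterization above goes through without ambiguity. Since a fully faithful and essentially surjective functor is an equivalence, this completes the proof.
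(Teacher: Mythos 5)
Your proposal is correct and follows essentially the same route as the paper: the paper obtains this theorem by packaging the preceding corollary (closed convex subsets of ordered Banach spaces are $P$-algebras in $\lcomet$), Theorem~\ref{poscone} (the converse, via an order-preserving representation), and the observation that the unordered morphism characterization of~\cite{ours_kantorovich} combined with monotonicity identifies algebra morphisms with short affine monotone maps. Your explicit functor $\Phi$ and the fully-faithful/essentially-surjective verification simply spell out the categorical bookkeeping the paper leaves implicit, including the one genuinely delicate point (that the intrinsic algebra order agrees with the restricted cone order), which is exactly what the proof of Theorem~\ref{poscone} establishes.
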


We will refer to $P$-algebras in $\lcomet$ as \emph{ordered $P$-algebras}. These of course include those with trivial order. 

As it is well-known, any monad induces an adjunction with its category of algebras~\cite[VI.2, Theorem~1]{maclane}. Therefore we have a natural bijection
\begin{equation}\label{choqadj2}
\lcomet(X,A)\cong \lcomet^P(PX,A),
\end{equation} 
where $A$ on the right is considered as a $P$-algebra, and on the left as its underlying $L$-ordered space.
We can interpret this as an ordered, noncompact version of Choquet theory~\cite[Chapter~1]{winkler}.
More concretely, this means the following. A short, monotone, map $f:X\to A$ from a complete ordered space $X$ to an ordered convex space ($P$-algebra) $A$ is uniquely determined by the affine extension it defines, as an affine map on probability measures
\begin{equation*}
 p\longmapsto \int_X f \,dp ,
\end{equation*}
i.e.\ the $P$-morphism given by the composition
\begin{equation}\label{integral}
 \begin{tikzcd}
  PX \ar{r}{Pf} & PA \ar{r}{e} & A ,
 \end{tikzcd}
\end{equation}
and every affine map $\tilde{f} : PX\to A$ can be written uniquely in this form, as the affine extension of a map $f:X\to A$. This affine is given by the restriction of $\tilde{f}$ to the extreme points of the simplex: $f$ is the composite
\begin{equation}\label{restriction}
 \begin{tikzcd}
  X \ar{r}{\delta} & PX \ar{r}{\tilde{f}} & A ,
 \end{tikzcd}
\end{equation}
and the original $\tilde{f}$ is its affine extension.
The standard theory of monads tells us that the operations $f\mapsto e\circ(Pf)$ and $\tilde{f}\mapsto \tilde{f}\circ\delta$ are inverse to each other, forming the natural bijection~\eqref{choqadj2}.

\subsection{The integration map is strictly monotone}\label{strictlymonotone}

For real random variables, it is well-known that if $p<q$ strictly, then $e(p)<e(q)$ strictly~\cite[Theorem~1]{fishburn}. The interpretation is that \emph{if one moves a nonzero amount of mass upwards in the order, then the center of mass will strictly rise}. Here we give a general version of the same statement, which applies to any ordered $P$-algebra, or equivalently to any closed convex subset of an ordered Banach space.

\begin{prop}\label{epev}
 Let $A$ be an ordered $P$-algebra, and let $p,q\in PA$. Suppose that $p\le q$ in the usual stochastic order, and $e(p)=e(q)$. Then $p=q$. 
\end{prop}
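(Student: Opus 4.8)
The plan is to represent $A$ as a closed convex subset of an ordered Banach space $B$ with positive cone $B^+$ via Theorem~\ref{poscone}, so that the algebra map $e$ becomes the barycenter map $p\mapsto\int a\,dp(a)$, with values in $A\subseteq B$. Since $p\le q$, Definition~\ref{defstochord} (equivalently Theorem~\ref{kellererthm}) supplies a coupling $r\in P(A\otimes A)$ supported on $\{\le\}$ with marginals $p$ and $q$. The goal is then to show that $r$ is concentrated on the diagonal $\{(a,b):a=b\}$, since this immediately forces $p=q$. To this end I would consider the \emph{displacement} $w(a,b):=b-a$, which is a short (hence Bochner-integrable, by finite first moment) map $A\otimes A\to B$ taking values in $B^+$ for $r$-almost every $(a,b)$, because $a\le b$ there. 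Equality of barycenters then gives $\int w\,dr = \int b\,dr - \int a\,dr = e(q)-e(p)=0$.

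Next I would test this vanishing integral against the short monotone \emph{linear} functionals supplied by the Hahn--Banach argument of Proposition~\ref{applyhahnbanach}. For any such $h:B\to\R$ we have $\int h(w)\,dr = h\bigl(\int w\,dr\bigr)=0$, while $h(w)\ge 0$ holds $r$-almost everywhere (as $w\in B^+$ and $h$ is monotone); hence $h(w)=0$ for $r$-almost every $(a,b)$. The key point is that these functionals detect the order strictly on genuine positive differences: if $(a,b)$ satisfies $a\le b$ but $a\ne b$, then antisymmetry of the order of $A$ gives $b\nleq a$, so that $w(a,b)=b-a\nleq 0$ in $B$, and Proposition~\ref{applyhahnbanach} yields a short monotone linear $h$ with $h(w(a,b))>0$.

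The remaining---and main---difficulty is that the null set on which $h(w)>0$ depends on the choice of $h$, whereas I need a single null set off of which $w=0$. I would resolve this using separability: the support of the Radon measure $r$ is separable, so the displacements $w(a,b)$ for $(a,b)$ ranging over $\{a\ne b\}\cap\supp r$ form a separable subset $S\subseteq B\setminus\{0\}$. By the previous paragraph the open sets $\{h>0\}$, as $h$ runs over the short monotone linear functionals, cover $S$; since $S$ is separable and hence Lindelöf, countably many of them, say $\{h_k\}_{k\in\N}$, already cover $S$. As $h_k(w)=0$ $r$-almost everywhere for each fixed $k$, the countable union $\bigcup_k\{h_k(w)>0\}$ is $r$-null, and it contains $\{a\ne b\}$ up to a null set. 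Therefore $r(\{a\ne b\})=0$, so $r$ is concentrated on the diagonal and $p=q$. I expect this Lindelöf/countable-separation step to be the crux of the argument, since it is what upgrades the per-functional almost-everywhere vanishing to an almost-everywhere identity of measure-valued displacements; the rest is a direct translation of the equal-barycenter hypothesis into the vanishing of a cone-valued integral.
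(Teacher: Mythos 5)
Your proof is correct, and it follows the paper's skeleton up to the last step: embed $A$ into an ordered Banach space via Theorem~\ref{poscone}, take a coupling $r$ of $(p,q)$ supported on $\{\le\}$, reduce everything to showing that $r$ is concentrated on the diagonal, and play the short monotone \emph{linear} functionals of Proposition~\ref{applyhahnbanach} against the equal-barycenter hypothesis. The divergence is exactly at the step you single out as the crux. The paper never encounters the ``null set depends on $h$'' problem, because it reverses the order of quantifiers: arguing by contradiction, it first fixes a single off-diagonal point $(\bar x,\bar y)\in\supp(r)$ (which exists because $r$ is Radon, hence gives full measure to its support), then chooses \emph{one} functional $h$ with $h(\bar x)<h(\bar y)$, and concludes by continuity: the integrand $h(y)-h(x)$ has constant sign on $\supp(r)$ and is bounded away from $0$ on a neighborhood of $(\bar x,\bar y)$, a neighborhood which has positive $r$-measure by definition of support, so $\int \big(h(y)-h(x)\big)\,dr \ne 0$, contradicting the vanishing forced by linearity of $h$ together with $e(p)=e(q)$. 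Thus a single functional plus a purely topological argument does the work for which you invoke Bochner integration, separability of $\supp(r)$, and a Lindel\"of/countable-subcover argument. Your route is sound --- the covering step is valid since separable metric spaces are Lindel\"of, and both proofs ultimately rest on $r$ being Radon --- but it is heavier machinery; what it buys is the slightly stronger almost-everywhere statement (that $h(w)=0$ holds $r$-a.e.\ for \emph{every} short monotone linear $h$) and the clean displacement formulation $\int w\,dr = e(q)-e(p)=0$, whereas the paper's argument is more elementary and local.
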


The proof is reminiscent of the proof of Proposition~\ref{preantisymmetry}.

\begin{proof}
By definition of the stochastic order, we know that there exists a joint $r\in P(A\otimes A)$ of $p$ and $q$ whose support lies entirely in the relation $\{\le\}\subset A\otimes A$. We want to prove that in fact, $r$ must be supported on the diagonal $D:=\{(a,a), a\in A\}$, since this implies that $p=q$.

We use an isometric order embedding $A \subseteq B$ into an ordered Banach space $B$, which we know to exist by Theorem~\ref{poscone}, and work with the pushforwards of $p$, $q$ and $r$ to $B$ instead. This way, we can assume $A = B$ without loss of generality, which we do from now on.

Now suppose that $r$ is \emph{not} entirely supported on the diagonal. Then there exists an $(a,b)\in B\otimes B$ with $a<b$ strictly, such that every open neighborhood of $(a,b)$ has strictly positive $r$-measure. The Hahn-Banach separation theorem (via Proposition~\ref{applyhahnbanach}) gives us a map $h : B \to \R$ which is short, linear, and monotone, and such that $h(a) < h(b)$. Now consider the integral
 \begin{equation}\label{inth}
  \int_{B\otimes B} (h(x) - h(y)) \, dr(x,y) .
 \end{equation}
 We have on the one hand, using that $h$ is linear,
 \begin{align*}
  &\int_{B\otimes B} (h(x) - h(y)) \, dr(x,y) \\
  &= \int_{B\otimes B} h(x) \, dr(x,y) - \int_{B\otimes B} h(y) \, dr(x,y) \\
  &= \int_B h(x) \, dp(x)  - \int_B h(y) \, dq(y) \\
  &= h\left( \int_B x \, dp(x) - \int_B y \, dq(y) \right) \\
  &= h\left( e(p) - e(q) \right) = 0.
 \end{align*}
 At the same time, we have that the integrand of~\eqref{inth} is continuous and nonnegative on the support of the measure $r$, while being strictly positive on $(a,b) \in \supp(r)$. This implies that the integral itself is strictly positive, a contradiction. Therefore our assumption that $r$ is not supported on $D$ must have been false.
\end{proof}

\subsection{2-Categorical structure}\label{ssechigher}

We now consider $\lcomet$ as a category enriched in posets, or equivalently as a \emph{locally posetal} 2-category. Concretely, we put $f \leq g$ for $f,g : X\to Y$ if and only if $f(x) \le g(x)$ for all $x\in X$. This property is preserved by $P$:

\begin{prop}\label{Pmonotone}
 Let $f\le g:X\to Y$. Then $Pf \le Pg:PX\to PY$. 
\end{prop}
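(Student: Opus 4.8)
The plan is to unwind what the inequality $Pf \le Pg$ means and then reduce it to the dual characterization of the stochastic order provided by Theorem~\ref{Ldual}. By the definition of the local order on $\lcomet$, the statement $Pf \le Pg$ asserts precisely that $(Pf)(p) \le (Pg)(p)$ in $PY$ for every $p \in PX$; since $Pf$ and $Pg$ are pushforward maps, this reads $f_* p \le g_* p$ in the stochastic order on $PY$. So I would fix $p \in PX$ and aim to establish this single inequality.

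First I would invoke Theorem~\ref{Ldual}, which applies because $Y$ is L-ordered: the relation $f_* p \le g_* p$ holds if and only if $\int h \, d(f_* p) \le \int h \, d(g_* p)$ for every short monotone $h : Y \to \R$. Rewriting the two sides via the change-of-variables formula for pushforwards turns the desired inequality into
\begin{equation*}
 \int_X h(f(x)) \, dp(x) \le \int_X h(g(x)) \, dp(x).
\end{equation*}

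The key observation is then pointwise: since $f(x) \le g(x)$ for all $x$ by hypothesis, and $h$ is monotone, we have $h(f(x)) \le h(g(x))$ for every $x \in X$. Integrating this pointwise inequality against $p$ yields exactly the displayed inequality, and Theorem~\ref{Ldual} lets us conclude $f_* p \le g_* p$, hence $Pf \le Pg$.

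I do not expect any genuine obstacle here: the argument is essentially a one-line consequence of the duality theorem once the definitions are unwound, the only inputs being the monotonicity of the test functions $h$ and the pointwise order $f \le g$. If one prefers to avoid L-orderedness altogether, the same conclusion follows directly from Definition~\ref{defstochord}: the map $x \mapsto (f(x), g(x))$ is continuous (as $f,g$ are short) and lands in the closed set $\{\le\} \subseteq Y \otimes Y$, so the pushforward $(f,g)_* p$ is a coupling of $f_* p$ and $g_* p$ supported on $\{\le\}$, witnessing $f_* p \le g_* p$ in $\cat{COMet}$ directly.
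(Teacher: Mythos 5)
Your main argument is correct and is essentially the paper's own proof: fix $p$, test against short monotone $h:Y\to\R$, use the pointwise inequality $h(f(x))\le h(g(x))$ under the integral, and conclude via Theorem~\ref{Ldual}.

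Your closing alternative is worth singling out, because it is a genuinely different and in fact more general route. The pushforward $(f,g)_*p$ along the Lipschitz map $x\mapsto(f(x),g(x))$ is indeed a coupling of $f_*p$ and $g_*p$; it lies in $P(Y\otimes Y)$ since $(f,g)$ is $2$-Lipschitz and $p$ has finite first moment, and it vanishes on the complement of the image of $(f,g)$, hence is supported on the closed set $\{\le\}\subseteq Y\otimes Y$. This witnesses $f_*p\le g_*p$ directly from Definition~\ref{defstochord}, with no appeal to Kantorovich duality, and it proves the statement for all of $\cat{COMet}$ rather than just $\lomet$-type spaces --- paralleling the paper's own observation (after Proposition~\ref{Pfmonotone}) that functoriality of $P$ on monotone maps does not require L-orderedness, even though the statements are phrased for $\lcomet$. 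The duality route buys uniformity with the rest of Section~\ref{ssechigher}, where Theorem~\ref{Ldual} is the standing tool; the coupling route buys generality and avoids invoking the L-order hypothesis where it is not needed.
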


\begin{proof}
 Let $h:Y\to\R$ be short and monotone. We have that for every $x\in X$, $f(x)\le g(x)$ in $Y$, therefore $h\circ f(x) \le h\circ g(x)$. Since all the measures in $PX$ are positive (or equivalently, positive linear functionals), we get that for every $p\in P$,
 \begin{equation*}
  \int_X h \, d(f_*p) = \int_X h\circ f \, dp \le \int_X h\circ g \, dp = \int_X h \, d(g_*p) .
 \end{equation*}
Since $h$ was arbitrary, we conclude $f_* p \le g_* p$ by Theorem~\ref{Ldual}. Since $p$ was arbitrary, we therefore conclude $Pf\le Pg$.
\end{proof}

\begin{cor}
 $P$ is a strict 2-functor, and so also a strict 2-monad, on $\lcomet$ (as a strict 2-category). 
\end{cor}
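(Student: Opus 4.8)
The plan is to exploit the fact that local posetality collapses all genuinely two-dimensional structure: in $\lcomet$, viewed as $\cat{Pos}$-enriched, there is at most one $2$-cell between any two parallel $1$-cells, and such a $2$-cell exists precisely when $f \le g$ pointwise. Consequently, verifying that $P$ is a strict $2$-functor and a strict $2$-monad reduces to checking that $P$ is an ordinary endofunctor acting monotonically on hom-posets, together with the fact that the monad structure maps are morphisms of $\lcomet$; every equation between $2$-cells then holds automatically.

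First I would recall that $P$ is already an endofunctor on $\lcomet$ (the corollary following Proposition~\ref{Pmonadmonotone}) and that its action on hom-posets is monotone by Proposition~\ref{Pmonotone}, i.e.\ $f \le g$ implies $Pf \le Pg$. Since composition in $\lcomet$ is monotone in each variable, $\lcomet$ is genuinely $\cat{Pos}$-enriched, and a $\cat{Pos}$-enriched endofunctor is exactly a strict $2$-functor. The preservation of vertical and horizontal composites of $2$-cells, as well as of identity $2$-cells, is then vacuous: any two $2$-cells with the same source and target coincide, so these coherence equations impose no condition once $f \mapsto Pf$ is known to be monotone.

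Next I would upgrade this to a strict $2$-monad. The components $\delta_X : X \to PX$ and $E_X : PPX \to PX$ are $\lcomet$-morphisms by Proposition~\ref{Pmonadmonotone}, and $\delta, E$ are ordinary natural transformations because $(P,\delta,E)$ is already a monad on the underlying $1$-category. The remaining $2$-naturality condition is automatic: for $f \le g : X \to Y$, the two whiskered $2$-cells appearing in the $2$-naturality square have identical source and target $1$-cells, by the ordinary naturality squares for $f$ and for $g$, and hence are equal by local posetality; and the monad associativity and unit laws, being equations of $1$-cells, are inherited verbatim from the $1$-monad. I therefore expect no real obstacle here: the only substantive input is Proposition~\ref{Pmonotone}, and the corollary is otherwise simply the observation that poset-enrichment trivializes all higher coherence.
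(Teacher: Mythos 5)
Your proposal is correct and matches the paper's (implicit) reasoning exactly: the corollary is stated without further proof precisely because, once Proposition~\ref{Pmonotone} gives monotonicity of $f \mapsto Pf$ on hom-posets and Proposition~\ref{Pmonadmonotone} gives monotone components $\delta$ and $E$, local posetality makes all remaining 2-cell coherence conditions automatic. You have simply spelled out the same argument the paper leaves to the reader.
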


Consider now the adjunction given by the bijection~\eqref{choqadj2}. The operations $f\mapsto e\circ(Pf)$ and $\tilde{f}\mapsto \tilde{f}\circ\delta$ forming the bijection are monotone:
\begin{itemize}
 \item If $f \le g:X\to A$, then $Pf\le Pg$ by Proposition~\ref{Pmonotone}, and then $e\circ(Pf)\le e\circ(Pf)$ by monotonicity of $e$;
 \item If $\tilde{f} \le \tilde{g}:PX\to A$, then also $\tilde{f}\circ\delta \le \tilde{f}\circ\delta$ by monotonicity of composition.
\end{itemize}

Therefore, the correspondence
\begin{equation}\label{ordchoqadj}
\lcomet(X,A)\cong \lcomet^P(PX,A)
\end{equation} 
is not just a bijection of sets, but also an isomorphism of partial orders. In other words, it is an adjunction in the enriched (locally posetal) sense. 

From the abstract point of view, the 2-monad $P$ induces a 2-adjunction, which implies an equivalence of the hom-preorders in~\eqref{ordchoqadj}. But since all the objects of our categories are partial orders, all the hom-categories are skeletal, and therefore such an equivalence of preorders must be an isomorphism of partial orders. 

We now give a 2-categorical analogue of the concept of \emph{separation of points}. In an L-ordered space, by definition, the morphisms to $\R$ are enough to distinguish points and to determine the order. Here is how we can formalize the statement, by defining an analogue of \emph{coseparators} for locally posetal 2-categories.

\begin{deph}\label{defcosep}
 Let $\cat{C}$ be a locally posetal 2-category. We call a \emph{2-coseparator} an object $S$ of $\cat{C}$ such that the 2-functor 
 $$
 \cat{C}(-,S):\cat{C}^\op\to\cat{Poset}
 $$
 is locally fully faithful.
\end{deph}

By definition of L-orderedness, $\R$ is a 2-coseparator in the categories $\lomet$ and $\lcomet$. Conversely, we can characterize the categories $\lomet$ and $\lcomet$ as being \emph{exactly} the largest full subcategories of $\cat{OMet}$ and $\cat{COMet}$ on which $\R$ is a 2-coseparator. 

Thanks to the Hahn-Banach theorem in the form of Proposition~\ref{applyhahnbanach}, we know that the order on $P$-algebras is determined even just by \emph{affine} short monotone maps:

\begin{prop}
 Let $X\in\lcomet$, and let $A$ be a $P$-algebra. Consider two maps $f,g:X\to A$. Then $f\le g$ in the pointwise order if and only if for every $P$-morphism $h:A\to\R$, we have $h\circ f \le h\circ g$.
\end{prop}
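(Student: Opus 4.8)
The plan is to handle the two implications separately, the forward direction being immediate and the reverse direction reducing to the Hahn--Banach statement of Proposition~\ref{applyhahnbanach}.

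For the ``only if'' direction, suppose $f\le g$ pointwise, so that $f(x)\le g(x)$ in $A$ for every $x\in X$. Any $P$-morphism $h:A\to\R$ is in particular monotone, so $h(f(x))\le h(g(x))$ for all $x$, i.e.\ $h\circ f\le h\circ g$. This uses nothing beyond monotonicity of algebra morphisms.

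For the ``if'' direction, the key observation is that the $P$-morphisms $A\to\R$ include all restrictions to $A$ of short monotone linear functionals on an ambient ordered Banach space. First I would invoke Theorem~\ref{poscone} to realize $A$ as a closed convex subset of an ordered Banach space $B$, via an order embedding. Fix $x\in X$; the goal is to conclude $f(x)\le g(x)$ in $A$, equivalently in $B$. For any short monotone linear functional $\ell:B\to\R$, its restriction $\ell|_A$ is short, monotone and affine, hence a $P$-morphism $A\to\R$ (recall that $\R$, as an ordered Banach space, is itself a $P$-algebra). Applying the hypothesis to $h=\ell|_A$ gives $\ell(f(x))\le \ell(g(x))$. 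Since this holds for every such $\ell$, Proposition~\ref{applyhahnbanach} applied to the pair $f(x),g(x)\in B$ yields $f(x)\le g(x)$ in $B$, and because the embedding is an order embedding, also in $A$. As $x\in X$ was arbitrary, $f\le g$ in the pointwise order.

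The only point requiring care---the main, if mild, obstacle---is ensuring that the separating functionals produced by Hahn--Banach genuinely count as $P$-morphisms of $A$, so that the hypothesis applies to them. This is precisely why one passes through the \emph{linear} functionals on the ambient space $B$, whose restrictions to $A$ are automatically affine, short and monotone, rather than trying to work with arbitrary affine maps on $A$ directly; and it is why Proposition~\ref{applyhahnbanach}, which guarantees separation by short monotone \emph{linear} functionals, is exactly the tool needed here.
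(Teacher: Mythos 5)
Your proof is correct and follows essentially the same route as the paper's: the ``only if'' direction by monotonicity of $h$, and the ``if'' direction by embedding $A$ into an ordered Banach space via Theorem~\ref{poscone} and separating with a short monotone linear functional via Proposition~\ref{applyhahnbanach}. The paper states this contrapositively and more tersely (it leaves the embedding and restriction step implicit), whereas you spell out why the restricted functionals $\ell|_A$ qualify as $P$-morphisms; the substance is the same.
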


\begin{proof}
 Since $h$ is required to be monotone, the ``only if'' direction is trivial. 
 
 Suppose now that $f\nleq g$. Then by definition there exists $x\in X$ such that $f(x)\nleq g(x)$ in $A$. By Proposition~\ref{applyhahnbanach}, we know that there exists an affine map $h:A\to\R$ such that $h(f(x))>h(g(x))$ strictly, So $h\circ f \nleq h\circ g$. 
\end{proof}

\begin{cor}\label{cosepalg}
 The real line $\R$ is a 2-coseparator in the Eilenberg-Moore category $\lcomet^P$.
\end{cor}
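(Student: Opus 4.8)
The plan is to unwind Definition~\ref{defcosep} in the concrete case $\cat{C} = \lcomet^P$ with $S = \R$, and then observe that the essential work has already been done by the Proposition immediately preceding the corollary. First note that $\R$, regarded as a closed convex subset of the ordered Banach space $\R$ with its barycenter map, is itself an object of $\lcomet^P$, so that the hom-posets $\lcomet^P(A,\R)$ make sense. By Definition~\ref{defcosep}, asserting that $\R$ is a 2-coseparator amounts to showing that for every pair of $P$-algebras $A, B$, the precomposition functor
\begin{equation*}
 \lcomet^P(A,B) \longrightarrow \cat{Poset}\big( \lcomet^P(B,\R), \lcomet^P(A,\R) \big), \qquad f \longmapsto (-\circ f),
\end{equation*}
is fully faithful. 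Since both source and target are posets, a functor is automatically faithful and is full precisely when it reflects the order, so full faithfulness here is equivalent to this map being an order embedding, i.e.\ to the statement that for all $P$-morphisms $f, g : A \to B$,
\begin{equation*}
 f \le g \quad\Longleftrightarrow\quad h\circ f \le h\circ g \ \text{ for every } P\text{-morphism } h : B \to \R .
\end{equation*}

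The forward implication is immediate: if $f \le g$ in the pointwise order, then for any monotone $h$ we obtain $h\circ f \le h\circ g$ pointwise, so precomposition is order-preserving (this is just the functoriality of $\lcomet^P(-,\R)$ on 2-cells).

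For the reverse implication, the key observation is that a $P$-algebra contributes nothing beyond its underlying $L$-ordered space for the purposes of this argument. Indeed, by Theorem~\ref{poscone} the domain $A$ is isomorphic to a closed convex subset of an ordered Banach space, so its underlying object lies in $\lcomet$ by Remark~\ref{lordembed}; and any $P$-morphism $A \to B$ is in particular a short monotone map, hence a morphism of $\lcomet$. We may therefore apply the Proposition immediately preceding this corollary, taking its ``$X$'' to be the underlying space of $A$ and its ``$A$'' to be $B$: that Proposition states exactly that $f \le g$ holds as soon as $h\circ f \le h\circ g$ for every $P$-morphism $h : B \to \R$. This yields the reverse implication, and hence full faithfulness.

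The substance of the corollary thus resides entirely in the preceding Proposition, whose nontrivial direction rests on Hahn--Banach separation in the form of Proposition~\ref{applyhahnbanach}. Consequently there is no genuine obstacle at the level of the corollary itself; the only point requiring care is the bookkeeping that a $P$-morphism between $P$-algebras is automatically a $\lcomet$-morphism between the underlying $L$-ordered spaces, so that the Proposition applies verbatim with the Eilenberg--Moore domain $A$ playing the role of an arbitrary object of $\lcomet$.
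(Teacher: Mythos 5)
Your proof is correct and takes essentially the same route as the paper, which states this corollary without separate proof as an immediate consequence of the preceding Proposition: the forward direction is monotonicity of $h$, and the reverse direction is that Proposition applied with its $X$ taken to be the underlying $L$-ordered space of the domain algebra. The only superfluous step is your appeal to Theorem~\ref{poscone} and Remark~\ref{lordembed} to place that underlying space in $\lcomet$ --- this holds by definition of the Eilenberg--Moore category $\lcomet^P$, whose objects are objects of $\lcomet$ equipped with an algebra structure.
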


\subsection{Convex monotone maps as oplax morphisms}\label{ssecoplax}

In this subsection, we still consider $\lcomet$ a strict 2-category and $P$ a strict 2-monad, in the sense explained in~\ref{ssechigher}.

This means that for algebras of the ordered Kantorovich monad, the algebra morphisms are not the only interesting maps: there are also \emph{lax} algebra morphisms.
A lax $P$-morphism $f:A\to B$ is a short, monotone map together with a 2-cell (which here is a property rather than a structure),
\begin{equation}\label{laxPmap}
 \begin{tikzcd}
  PA \ar[swap]{d}{e} \ar{r}{Pf} 
   \ar[bend left,""{name=UPRI,pos=0.55},phantom]{dr} \ar[bend right,""{name=DOLE,pos=0.45},phantom]{dr}
    \ar[Rightarrow, from=UPRI, to=DOLE]
   & PB \ar{d}{e} \\
  A \ar[swap]{r}{f} & B 
 \end{tikzcd}
\end{equation}
Spelling this out, the condition is that $e(f(p)) \le f(e(p))$ for all $p\in PX$.

These maps are well known, at least in a special case.

\begin{prop}\label{concavemaps}
 Let $A$ be an unordered $P$-algebra, and consider $\R$ with its usual order. Let $f:A\to\R$ be short (and automatically monotone). Then $f$ is a lax $P$-morphism if and only if it is a concave function. 
\end{prop}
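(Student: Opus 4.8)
The plan is to unfold the defining 2-cell of a lax $P$-morphism into an explicit inequality and recognize it as Jensen's inequality. First I would recall that the algebra structure on $\R$ is the mean, $e(\mu) = \int_\R t\,d\mu(t)$, and that for a short $f:A\to\R$ the pushforward $Pf(p) = f_*p$ satisfies $e(f_*p) = \int_A f\,dp$, while the barycentric algebra map on $A$ gives $f(e(p)) = f\!\left(\int_A a\,dp(a)\right)$. Reading off the condition spelled out just below~\eqref{laxPmap}, namely $e(Pf(p)) \le f(e(p))$ for all $p\in PA$, the lax-morphism property therefore becomes exactly
\begin{equation*}
 \int_A f\,dp \;\le\; f\!\left( \int_A a\,dp(a) \right) \qquad\text{for all } p\in PA,
\end{equation*}
which is precisely Jensen's inequality for $f$. (Monotonicity of $f$ is automatic here, since $A$ is unordered.)

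For the ``only if'' direction I would specialize this inequality to two-point measures. Given $a,b\in A$ and $\lambda\in[0,1]$, take $p = \lambda\,\delta_a + (1-\lambda)\,\delta_b$. Then $e(p) = \lambda\,a + (1-\lambda)\,b$ and $\int_A f\,dp = \lambda f(a) + (1-\lambda) f(b)$, so the lax condition reads $\lambda f(a) + (1-\lambda) f(b) \le f(\lambda\,a + (1-\lambda)\,b)$, which is exactly concavity of $f$.

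For the ``if'' direction I would first establish the inequality on finitely supported measures and then extend by continuity. For an empirical distribution $p = \frac{1}{|N|}\sum_{n} \delta_{a_n}$ one has $e(p) = \frac{1}{|N|}\sum_n a_n$ and $\int_A f\,dp = \frac{1}{|N|}\sum_n f(a_n)$; iterating the binary concavity inequality then yields $\frac{1}{|N|}\sum_n f(a_n) \le f\!\left(\frac{1}{|N|}\sum_n a_n\right)$, i.e.\ the lax condition for $p$. To pass to an arbitrary $p\in PA$, I would observe that both sides of the desired inequality are continuous functions of $p$: the left-hand side is the composite $e\circ Pf$ and the right-hand side is the composite $f\circ e$, each a composition of short (hence continuous) maps. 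Since the empirical distributions are dense in $PA$ (by the density of finitely supported measures used in Proposition~\ref{orderdensity}), and two continuous functions obeying $\le$ on a dense subset obey it everywhere, the inequality holds for all $p\in PA$, so $f$ is a lax $P$-morphism.

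The forward direction is essentially trivial; the only point requiring care is the density/continuity extension in the converse, where one must know that integration against the short function $f$ and the barycenter map $e$ are continuous in the Wasserstein metric — but this is immediate from their shortness. I expect no serious obstacle beyond correctly identifying the two composites $e\circ Pf$ and $f\circ e$ with the two sides of Jensen's inequality.
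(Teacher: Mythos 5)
Your proof is correct and takes essentially the same route as the paper: unfold the lax 2-cell~\eqref{laxPmap} into the integral Jensen inequality $\int_A f\,dp \le f\left(\int_A a\,dp(a)\right)$ for all $p\in PA$, and identify this condition with concavity of $f$. The only difference is that where the paper simply cites ``the generalized Jensen's inequality'' for the equivalence between the integral form and binary concavity, you prove that equivalence yourself --- two-point measures for one direction; iterating the binary inequality on empirical distributions and then extending by density of finitely supported measures and continuity (shortness) of $e\circ Pf$ and $f\circ e$ for the other --- so your argument is a self-contained filling-in of the step the paper leaves as a citation.
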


\begin{proof}
 Diagram \eqref{laxPmap} can be written explicitly as:
 \begin{equation}
  \int_A f(a) \, dp(a) \le f \left( \int_A a\, dp(a) \right)
 \end{equation}
 for any $p\in PA$. 
By the generalized Jensen's inequality, this is equivalent to
 \begin{equation}
  \lambda\,f(a) + (1-\lambda)\,f(b) \;\le\; f\big( \lambda\,a + (1-\lambda)\,b \big) 
 \end{equation}
 for all $a,b\in A$ and $\lambda\in[0,1]$. 
 This is the usual definition of a concave function. 
\end{proof}

More in general, we think of lax $P$-morphisms as \emph{monotone} concave functions. Dually, \emph{oplax} $P$-morphisms---which are as in~\eqref{laxPmap} but with the inequality pointing the opposite way---correspond to monotone convex functions. We therefore have the following categories:
\begin{itemize}
 \item $P\cat{Alg}_s$, the category of $P$-algebras and \emph{strict} $P$-morphisms (affine maps);
 \item $P\cat{Alg}_l$, the category of $P$-algebras and \emph{lax} $P$-morphisms (concave maps);
 \item $P\cat{Alg}_o$, the category of $P$-algebras and \emph{oplax} $P$-morphisms (convex maps).
\end{itemize}
All these categories are again locally posetal 2-categories, and since they contain all affine maps, they all admit $\R$ with its usual order as a 2-coseparator.

Using Theorem~\ref{poscone}, we have then proven the following:

\begin{thm}\label{laxoplax}
Consider the monad $P$ on $\lcomet$. Then:
\begin{itemize}
 \item $P\cat{Alg}_s$ is equivalent to the category of closed convex subsets $A\subseteq E$ with $E$ an ordered Banach space, with morphisms given by monotone \emph{affine} short maps;
 \item $P\cat{Alg}_l$ is equivalent to the category of closed convex subsets $A\subseteq E$ with $E$ an ordered Banach space, with morphisms given by monotone \emph{concave} short maps;
 \item $P\cat{Alg}_o$ is equivalent to the category of closed convex subsets $A\subseteq E$ with $E$ an ordered Banach space, with morphisms given by monotone \emph{convex} short maps.
\end{itemize}
\end{thm}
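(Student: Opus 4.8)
My plan is to separate the object part of the equivalence from the morphism part. The objects of $P\cat{Alg}_s$, $P\cat{Alg}_l$ and $P\cat{Alg}_o$ are identical---all three are the $P$-algebras in $\lcomet$---and Theorem~\ref{poscone} together with the preceding corollary already identifies these with the closed convex subsets of ordered Banach spaces. So the whole statement reduces to checking, in each case, that the declared morphism class coincides with the strict, lax, or oplax $P$-morphisms. The strict case is exactly the equivalence recorded just above, so I only need to match lax against concave and oplax against convex; since these two are mirror images, I would carry out the lax case in detail and obtain the oplax case by reversing every inequality.

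For the lax case, let $f:A\to B$ be a short monotone map between $P$-algebras. Unwinding the 2-cell in~\eqref{laxPmap}, laxness is the inequality $e_B(Pf(p)) \le f(e_A(p))$ in $B$, for all $p\in PA$. The key idea is to test this inequality using $\R$ as a coseparator: by Corollary~\ref{cosepalg}, for two points $u,v\in B$ one has $u\le v$ if and only if $h(u)\le h(v)$ for every $P$-morphism $h:B\to\R$, these being precisely the short monotone affine functionals on $B$.

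The main computational step is then to rewrite both sides after composing with such an $h$. Since $h$ is a strict $P$-morphism it commutes with the algebra maps, $h\circ e_B = e_{\R}\circ Ph$; combining this with the functoriality identity $Ph\circ Pf = P(h\circ f)$ and the fact that $e_{\R}$ sends a measure on $\R$ to its barycenter, I obtain
\begin{equation*}
 h\big( e_B(Pf(p)) \big) = e_{\R}\big( P(h\circ f)(p) \big) = \int_A (h\circ f)\,dp ,
\end{equation*}
while the right-hand side is simply $(h\circ f)\big( e_A(p) \big)$. Hence $f$ is lax if and only if $\int_A (h\circ f)\,dp \le (h\circ f)(e_A(p))$ for all $p\in PA$ and all such $h$, which by Proposition~\ref{concavemaps} (the generalized Jensen inequality) says exactly that each scalar map $h\circ f:A\to\R$ is concave.

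Finally I would close the loop with a second application of the coseparator property: a map $f$ is concave in the vector-valued sense---$\lambda\,f(a)+(1-\lambda)\,f(b)\le f(\lambda\,a+(1-\lambda)\,b)$ in $B$---if and only if $h\circ f$ is concave for every short monotone affine $h:B\to\R$, since each such $h$ is affine and therefore turns the defining inequality in $B$ into the scalar concavity of $h\circ f$. Chaining the two equivalences identifies lax $P$-morphisms with concave short monotone maps, and reversing all inequalities identifies oplax $P$-morphisms with convex ones. I expect the only genuine obstacle to be the passage from scalar-valued to vector-valued maps; this is precisely where Hahn--Banach enters, packaged here as the coseparator property of Corollary~\ref{cosepalg}, and it is exactly what lets Proposition~\ref{concavemaps} be applied beyond the case $B=\R$.
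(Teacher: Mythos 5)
Your proposal is correct and follows essentially the same route as the paper: the objects are handled by Theorem~\ref{poscone}, the strict case is the already-established equivalence, and the lax/oplax morphism classes are matched to concave/convex maps via the scalar Jensen characterization of Proposition~\ref{concavemaps}. If anything, your write-up is more complete than the paper's, which passes from the scalar case to vector-valued codomains with only the remark that lax morphisms are ``thought of'' as monotone concave maps; your explicit reduction via the 2-coseparator property (Corollary~\ref{cosepalg}, i.e.\ Hahn--Banach in the form of Proposition~\ref{applyhahnbanach}) supplies precisely the justification the paper leaves implicit.
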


\begin{remark}
 It is a very well-known fact that the composition $f\circ g$ of two convex functions $f,g:\R\to\R$ may not be a convex function; and that if $f$ is in addition monotone, then $f\circ g$ \emph{is} convex. We now explain how this makes perfect sense within our framework. We write $(\R,\le)$ for the object $\R\in\lcomet$ equipped with its usual order, and $(\R,=)$ for $\R\in\lcomet$ equipped with the discrete order. Technically all our maps are assumed to be short, but the same considerations should apply more generally.

By Proposition \ref{concavemaps}, a concave function $\R\to\R$ is the same thing as a lax $P$-morphism $(\R,=) \to (\R,\le)$; monotonicity is a trivial requirement. A monotone concave function $\R\to\R$ is the same thing as a lax $P$-morphism $(\R,\le) \to (\R,\le)$. In our formalism, both functions are technically monotone, but with respect to different orders on the domain. Due to the possibility of composing in $P\cat{Alg}_l$, we have:
 \begin{itemize}
  \item Two concave monotone functions $(\R,\le) \to (\R,\le)$ can be composed, giving again a concave monotone function $(\R,\le) \to (\R,\le)$;
  \item A concave monotone function $(\R,=) \to (\R,\le)$ can be postcomposed with a concave monotone function $(\R,\le) \to (\R,\le)$, giving a concave monotone function $(\R,=) \to (\R,\le)$;
  \item Two concave monotone functions $(\R,=) \to (\R,\le)$ \emph{cannot} be composed, since domain and codomain do not match.
 \end{itemize}
 We see that in this framework, the rule for when the composition of concave functions is again concave is just elementary category theory. The same applies to convex functions as oplax $P$-morphisms.
\end{remark}

\newpage
\appendix
\counterwithin{thm}{section} 

\section{The metric lifting property}
\label{appmlift}

In this appendix we define the metric lifting property used in the proofs of~\ref{ssecproperness}. 

\begin{deph}\label{defmetriclifting}
A short map $f : X \to Y$ in $\cat{Met}$ has the \emph{metric lifting property} if for every $C > 0$ and $x\in X$ and $y'\in Y$ with $d(f(x),y') < C$, there is $x'\in X$ with $f(x') = y'$ and $d(x,x') < C$.
\end{deph}

This notion is similar to the notion of \emph{submetry}~\cite{submetries}. A map $f:X\to Y$ between metric spaces is called a submetry if for every $x\in X$ and every $r>0$, the closed ball $B(x,r)$ of radius $r$ centered at $x$ is mapped by $f$ to the ball with the same radius, centered at $f(x)$. That is, 
$$
f(B(x,r))\;=\;B(f(x),r).
$$
If one replaces the strict inequalities ($<$) in the definition of metric lifting property with lax inequalities ($\le$), one recovers the notion of submetry. However, in general the metric lifting property is a slightly weaker condition, as the following example shows.

\begin{eg}\label{not_submetry_example}
 Let $X$ be a metric spaces constructed as follows. It has $\N$-many points, which we label as $w,x_1,x_2,\dots$, and the following distances:
 \begin{itemize}
  \item For all $i>j\ge 1$, $d(x_i,x_j)\coloneqq 2$;
  \item For all $i\ge 1$, $d(w,x_i)\coloneqq 1 +\frac{1}{n}$.
 \end{itemize}
 Consider also the space $Y\coloneqq \{y,z\}$, with $d(y,z)=1$, and the map $f:X\to Y$ which maps $w\mapsto y$ and $x_i\mapsto z$ for all $i$. We sketch the situation in the picture below.
 The map $f$ satisfies the metric lifting property, but is not a submetry.

 \begin{center}
  \begin{tikzpicture}[baseline=(current  bounding  box.center),>=stealth,scale=0.8]
  
  \node[bullet, label=left:$w$] (w) at (0,4) {};
  \node[bullet, label=right:$x_1$] (x1) at (9.5,5.5) {};
  \node[bullet, label=right:$x_2$] (x2) at (8,4) {};
  \node[bullet, label=right:$x_3$] (x3) at (6.5,2.5) {};
  \node[bullet, label=right:$\dots$] (x4) at (5,1) {};
  
  \draw (5,3.8) ellipse (7cm and 3.2cm);
  \node[label=center:$X$] (X) at (-1,1) {};
  
  \node[bullet, label=left:$y$] (y) at (3,-2) {};
  \node[bullet, label=right:$z$] (z) at (7,-2) {};
  
  \draw (5,-2) ellipse (4cm and 1cm);
  \node[label=center:$Y$] (Y) at (.5,-3) {};
  
  \draw[relation, dotted] (w) -- (x1) node[near end,above,font=\fontsize{8}{0}\selectfont] {$1+1$};
  \draw[relation, dotted] (w) -- (x2) node[near end,above,font=\fontsize{8}{0}\selectfont] {$1+\frac{1}{2}$};
  \draw[relation, dotted] (w) -- (x3) node[near end,above,font=\fontsize{8}{0}\selectfont] {$1+\frac{1}{3}$};
  \draw[relation, dotted] (w) -- (x4) node[near end,above,font=\fontsize{8}{0}\selectfont] {$1+\frac{1}{n}$};;
  
  \draw[|->, shorten <= 5pt, shorten >=5pt] (w) -- (y);
  \draw[|->, shorten <= 5pt, shorten >=5pt] (x1) -- (z);
  \draw[|->, shorten <= 5pt, shorten >=5pt] (x2) -- (z);
  \draw[|->, shorten <= 5pt, shorten >=5pt] (x3) -- (z);
  \draw[|->, shorten <= 5pt, shorten >=5pt] (x4) -- (z);
  
  \draw[relation, dotted] (y) -- (z) node[midway,above,font=\fontsize{8}{0}\selectfont] {$1$};
 \end{tikzpicture}
\end{center}
\end{eg}

Here is a result which allows us to extend the metric lifting property from a dense subset to the whole space, used in the proof of Lemma~\ref{deltametriclifting}.

\begin{lemma}
Let $f : X\to Y$ be in $\cat{Met}$ with $X$ complete, and $D\subseteq X$ and $E\subseteq Y$ dense. Suppose that for every $x\in D$, $y'\in E$ and $C>0$ with $d(f(x), y') < C$ there is $x'\in D$ with $f(x') = y'$ and $d(x,x') < C$. Then $f$ has the metric lifting property.
\label{metricliftinggeneral}
\end{lemma}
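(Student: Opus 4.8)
The plan is to prove this by a successive-approximation argument, building a Cauchy sequence in $X$ whose limit is the desired lift, in the spirit of the standard proof of the open mapping theorem. The key observation is that the hypothesis lets us lift points of $E$ over points of $D$ \emph{exactly} (the lift satisfies $f(x')=y'$, not merely approximately), so by chasing better and better approximations of the true target $y'$ we can force the images of our sequence to converge exactly to $y'$, while keeping the total displacement under control.

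First I would exploit the strictness of the assumption $d(f(x),y') < C$: set $C_0 := d(f(x),y')$, so that there is a strictly positive slack $C - C_0$ to spend. I would then fix an error budget consisting of an $\eta_0 > 0$ and a summable sequence $\theta_0 > \theta_1 > \cdots \to 0$ (for instance $\theta_n = \theta_0\, 2^{-n}$), chosen small enough that $2\eta_0 + 2\sum_{n\ge 0}\theta_n < C - C_0$. Using density of $D$ I pick $x_0 \in D$ with $d(x,x_0) < \eta_0$, and using density of $E$ I pick a sequence $y_n \in E$ with $d(y',y_n) < \theta_n$ for all $n$.

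Next comes the inductive lifting. For the base step, shortness of $f$ gives $d(f(x_0),f(x)) \le d(x_0,x)$, whence $d(f(x_0),y_0) \le d(x_0,x) + d(f(x),y') + d(y',y_0) < \eta_0 + C_0 + \theta_0$; by the hypothesis applied with source $x_0\in D$ and target $y_0\in E$, there is $x_1 \in D$ with $f(x_1) = y_0$ and $d(x_0,x_1) < \eta_0 + C_0 + \theta_0$. Inductively, given $x_n \in D$ with $f(x_n) = y_{n-1}$, I compute $d(f(x_n),y_n) = d(y_{n-1},y_n) \le \theta_{n-1} + \theta_n$ and apply the hypothesis again to obtain $x_{n+1} \in D$ with $f(x_{n+1}) = y_n$ and $d(x_n,x_{n+1}) < \theta_{n-1} + \theta_n$.

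Finally, the displacement bounds are summable, so $(x_n)$ is Cauchy; completeness of $X$ yields a limit $x' \in X$, and iterating the triangle inequality gives $d(x,x') \le \eta_0 + \sum_{n\ge 0} d(x_n,x_{n+1}) < 2\eta_0 + C_0 + 2\sum_{n\ge 0}\theta_n < C$ by our choice of budget. Since $f$ is short, hence continuous, and $f(x_n) = y_{n-1} \to y'$, we obtain $f(x') = y'$, as required. The main obstacle is the bookkeeping: one must simultaneously make the approximations $y_n$ approach the \emph{true} target $y'$ (so that the exact lifts $f(x_{n+1}) = y_n$ converge to $y'$ in the limit) and keep the accumulated displacement strictly below $C$. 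This is precisely what the geometric error budget, together with the initial slack $C - C_0$ coming from the strictness of the hypothesis, achieves.
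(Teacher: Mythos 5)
Your proof is correct and takes essentially the same route as the paper's: approximate $x$ by a point of $D$, approximate $y'$ by a geometrically convergent sequence in $E$, lift iteratively using the hypothesis so that the displacements form a summable series, and conclude via completeness of $X$, continuity of $f$, and an error budget keeping $d(x,x') < C$. The differences are purely cosmetic, namely the indexing of the lifted sequence and how the slack $C - d(f(x),y')$ is split between the two approximation errors.
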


We can illustrate the situation as follows:

 \begin{center}
 \begin{tikzpicture}[baseline=(current  bounding  box.center),>=stealth]
   \draw (0,4) ellipse (3cm and 1cm);
   \node[label=right:$X$] (x) at (1,2.8) {};
   
   \draw (-0.5,4) ellipse (2.5cm and 0.8cm);
   \node[label=right:$D$] (x) at (0.8,4) {};
   
   \draw (0,0) ellipse (3cm and 1cm);
   \node[label=right:$Y$] (x) at (1,-1.2) {};
   
   \draw (0.5,0) ellipse (2.5cm and 0.8cm);
   \node[label=right:$E$] (x) at (0.8,0.5) {};
   
   \node[bullet,label=above:$x$] (x) at (-2.2,4) {};
   \node[bullet,label=above:$x'$] (x') at (0,4) {};
   \node[bullet,label=below:$f(x)$] (fx) at (-2.2,0) {};
   \node[bullet,label=below:$y'$] (y') at (0,0) {};
   
   \draw[-,dashed] (x') -- (y');
   \draw[-] (x) -- (fx);
 \end{tikzpicture}
 \end{center}

\begin{proof}
Suppose that we are given $x\in X$ and $y'\in Y$ with $d(f(x),y') < C$ and look for a corresponding $x'\in X$. Since the inequality $d(f(x),y') < C$ is strict, we can find $\e > 0$ such that $d(f(x),y') < C - 2\e$ is still true. Then choose $\bar{x}\in D$ with $d(x,\bar{x}) < \e$, so that $d(f(\bar{x}),y') < C - \e$, using the triangle inequality and shortness of $f$. We furthermore choose $\nu > 0$ such that
\[
	d(f(\bar{x}),y') < C - \e - 2\nu
\]
is still valid. Now we approximate $y'$ by a sequence $\{\bar{y}'_j\}$ in $E$, starting at $j=1$, such that $d(y',\bar{y}'_j) < 2^{-(j+1)}\nu$, which guarantees $d(\bar{y}'_j,\bar{y}'_{j+1}) < 2^{-j} \nu$, and in particular
\begin{align*}
	d(f(\bar{x}),\bar{y}'_0) & \leq d(f(\bar{x}),y') + d(y',\bar{y}'_0) \\
		& < (C - \e - \; 2\nu) + \nu \leq C - \e - \; \nu.
\end{align*}
Then we construct a sequence $\{\bar{x}'_j\}$ in $D$, starting at $j=1$ as well, with the properties
\begin{itemize}
\item $f(\bar{x}'_j) = \bar{y}'_j$, and
\item $d(\bar{x}'_j,\bar{x}'_{j+1}) < 2^{-j}\nu$.
\end{itemize}
We can achieve this by taking $\bar{x}'_0$ to be the lift of $\bar{y}'_0$ relative to $\bar{x}$, and then apply the lifting assumption again repeatedly with respect to $\bar{x}'_j$ and $\bar{y}'_{j+1}$, resulting in $\bar{x}'_{j+1}$. A standard geometric series argument shows that $\{\bar{x}'_j\}$ is a Cauchy sequence, and we denote its limit by $x'$. By the first item above and continuity of $f$, we have $f(x') = \lim_j \bar{y}'_j = y'$, as desired. Moreover,
\begin{align*}
	d(x,x') = \lim_j d(x,\bar{x}'_j) & \leq d(x,\bar{x}) + d(\bar{x},\bar{x}'_0) + \sum_j d(\bar{x}'_j,\bar{x}'_{j+1}) \\
		& < \e \, + \; ( C - \e - \; \nu ) \, + \nu \\
		& = C,
\end{align*}
as was to be shown.
\end{proof}

Here is another useful technical result, used in the proof of Theorem~\ref{deltaproper}.

\begin{lemma}\label{pointtoproper}
Suppose that $f : X\to Y$ in $\cat{CMet}$ has the metric lifting property, and that $f$ is such that preimages of points are compact sets. Then $f$ is proper, i.e., preimages of compact sets are compact.
Moreover, $f$ is a submetry.
\end{lemma}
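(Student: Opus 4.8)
The plan is to treat the two assertions separately, deriving each from the metric lifting property together with the compactness of point-preimages.

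For properness, let $K\subseteq Y$ be compact; I would show that $f^{-1}(K)$ is totally bounded, which suffices since $f^{-1}(K)$ is a closed subset of the complete space $X$ and hence itself complete. Fix $\e>0$. Using compactness of $K$, cover it by finitely many balls $B(y_1,\e/2),\dots,B(y_n,\e/2)$ with $y_i\in K$. Each preimage $f^{-1}(y_i)$ is compact by hypothesis, hence totally bounded, so I can cover it by finitely many balls $B(x_{ij},\e/2)$ with $x_{ij}\in f^{-1}(y_i)$. Now given any $x\in f^{-1}(K)$, there is an index $i$ with $d(f(x),y_i)<\e/2$; the metric lifting property (applied with $C=\e/2$ and target $y_i$) produces $x'\in f^{-1}(y_i)$ with $d(x,x')<\e/2$, and then some $x_{ij}$ with $d(x',x_{ij})<\e/2$. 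The triangle inequality gives $d(x,x_{ij})<\e$, so the finitely many balls $B(x_{ij},\e)$ cover $f^{-1}(K)$. Thus $f^{-1}(K)$ is totally bounded and therefore compact.

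For the submetry claim, I must show $f(B(x,r))=B(f(x),r)$ for every $x\in X$ and $r>0$, where $B$ denotes closed balls. The inclusion $\subseteq$ is immediate from shortness of $f$. For $\supseteq$, let $y'\in B(f(x),r)$, so $d(f(x),y')\le r$. If this inequality is strict, the metric lifting property directly yields $z\in B(x,r)$ with $f(z)=y'$. The delicate case is the boundary $d(f(x),y')=r$, where the strictly-phrased lifting property does not apply as-is. Here I would apply the lifting property with the radii $C_k=r+\tfrac1k$ to obtain points $x_k$ with $f(x_k)=y'$ and $d(x,x_k)<r+\tfrac1k$; all of these lie in the compact set $f^{-1}(y')$, so a subsequence converges to some $z\in f^{-1}(y')$, and continuity of the metric gives $d(x,z)=\lim_k d(x,x_{k})\le r$. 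Hence $z\in B(x,r)$ with $f(z)=y'$, completing the reverse inclusion.

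The main obstacle is precisely this boundary case in the submetry statement: since the metric lifting property is formulated with strict inequalities, a limiting argument is unavoidable, and it is exactly the compactness of point-preimages---rather than mere closedness---that guarantees the limit $z$ exists while still lifting $y'$. The properness argument, by contrast, only combines total boundedness of the point-preimages with the approximate lifting, and is routine once the two-layer covering is set up correctly.
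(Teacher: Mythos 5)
Your proof is correct and follows essentially the same route as the paper's: properness via a two-layer covering argument (a finite $\e/2$-net on $K$ whose compact point-preimages are reached by metric lifting), giving total boundedness of the closed set $f^{-1}(K)$, and the submetry claim via lifts at radii $r+\tfrac{1}{k}$ together with a convergent subsequence in the compact fiber $f^{-1}(y')$. The only cosmetic difference is that the paper treats the submetry case uniformly, producing a sequence of lifts whose distances to $x$ converge to $d(f(x),y')$ exactly, whereas you split off the interior case before running the same limiting argument on the boundary case.
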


\begin{proof}
We start with properness. Since $f$ is by assumption continuous, we only need to prove that preimages of compact sets are totally bounded. Let now $K\subseteq Y$ be compact. For every $\e>0$, there exists a finite $(\e/2)$-net $\{y_n\}$ covering $K$ (i.e.~every $k\in K$ is within distance $\e/2$ from $\{y_n\}$). Take now the finite collection of sets $\{f^{-1}(y_n)\}$. By hypothesis, we know that all those sets are compact. Moreover, by the metric lifting property, we know that every element $x\in f^{-1}(K)$ is within distance $\e/2$ from some element of $\bigcup_n f^{-1}(y_n)$. Now the set $\bigcup_n f^{-1}(y_n)$ is a finite union of compact sets, so it is compact, and in particular it can be covered by finitely many balls of radius $\e/2$. This implies that for every $\e>0$, the whole $f^{-1}(K)$ can be covered by finitely many balls of radius $\e$, i.e.~it is totally bounded. 

Now to prove that $f$ is a submetry, let $x\in Y$ and $y'\in Y$. We have to find $x'\in X$ such that $d(x',x) = d(y',f(x))$. By metric lifting, we can find a sequence $\{x'_n\}$ in $X$ such that $d'(x'_n,x)$ converges to $d(y',f(x))$. Now choose a convergent subsequence by compactness.
\end{proof}

\section{Relationship with Lawvere metric spaces}\label{secindlmet}

In this final section, we connect our treatment of L-ordered metric spaces to the theory of Lawvere metric spaces~\cite{lawvere,seriously,gl}, which turns out to be intimately related. In particular, we will see that the L-distance defined in Section~\ref{secLord} can be interpreted as a particular Lawvere distance associated to the metric and the order. This also explains where the ``L'' in ``L-ordered'' comes from.

\begin{deph}
 A \emph{Lawvere metric space}~\cite{lawvere}, or more briefly LMS, is a set $\mathcal{X}$ equipped with a function $d_{L}:\mathcal{X}\times \mathcal{X}\to [0,\infty]$, called \emph{Lawvere metric}, such that:
\begin{enumerate}
 \item\label{dzero} $d_{L}(x,x)= 0$ for all $x\in\mathcal{X}$;
 \item\label{dtrans} $d_{L}(x,z)\le d_{L}(x,y) + d_{L}(y,z)$ for all $x,y,z\in\mathcal{X}$.
\end{enumerate} 
\end{deph}

In this work, we will for convenience restrict to Lawvere metrics which assume only finite values, $d_L(x,y) < \infty$.

Given LMSs $\mathcal{X}$ and $\mathcal{Y}$, a function $f:\mathcal{X}\to\mathcal{Y}$ is called a \emph{LMS morphism} if and only if it is short for the Lawvere metric: for every $x,x'\in\mathcal{X}$,
\begin{equation*}
 d_{L}(f(x),f(x')) \le d_{L}(x,x') .
\end{equation*}

Every ordinary metric space is a Lawvere metric space. Conversely, in general a Lawvere metric does not need to be symmetric, but one can obtain a (pseudo)metric by symmetrization:
\begin{equation}\label{symmet}
 d(x,y) := \max \{d_{L}(x,y), d_{L}(y,x) \}.
\end{equation}

Every preorder can be given a canonical Lawvere metric, as:
\begin{equation*}
 d_{L}(x,y) := \begin{cases}
  0 & x\le y ; \\
  1 & x\nleq y .
 \end{cases}
\end{equation*}
or even $\infty$ instead of $1$ (if we allowed that value).
Since a preorder is reflexive and transitive, the properties~\ref{dzero} and~\ref{dtrans} are automatically satisfied. Conversely, a Lawvere metric $d_L$ induces canonically a preorder by the following assignment:
\begin{equation}\label{sympord}
 x\le y  \quad\mbox{if and only if}\quad d_{L}(x,y)=0 .
\end{equation}

We can combine both constructions to obtain an \emph{ordered} metric space:

\begin{deph}
 Let $\mathcal{X}$ be a LMS. We define the ordered metric space $S\mathcal{X}$ to be:
 \begin{itemize}
  \item As set, the underlying set of $\mathcal{X}$, but where we identify any two points whenever \emph{both} their distances vanish;
  \item As metric $d$, the symmetrized metric~\eqref{symmet};
  \item As order $\le$, the one defined by~\eqref{sympord}.
 \end{itemize}
\end{deph}

By construction, we have that:
\begin{itemize}
 \item The induced pseudometric $d$ is actually a metric;
 \item The induced preorder $\le$ is actually a partial order. 
\end{itemize}

\begin{prop}
 Let $\mathcal{X}$ be a LMS, and construct its metric $d$ by symmetrization of $d_{L}$. Then $d_{L}$ is short with respect to the metric $d : \mathcal{X} \otimes \mathcal{X} \to \R$, and in particular continuous.
\end{prop}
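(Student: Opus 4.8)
The plan is to establish the Lipschitz bound
$$
|d_L(x,y) - d_L(x',y')| \le d(x,x') + d(y,y')
$$
directly from the two axioms of a Lawvere metric, using only the triangle inequality together with the elementary observation that $d_L(a,b) \le d(a,b)$ for all $a,b \in \mathcal{X}$, which is immediate from the definition $d(a,b) = \max\{d_L(a,b), d_L(b,a)\}$ of the symmetrized metric. Recall that the domain $\mathcal{X}\otimes\mathcal{X}$ carries the $\ell^1$-sum metric, so that the right-hand side above is exactly $d\big((x,y),(x',y')\big)$, and the displayed inequality is precisely the assertion that $d_L$ is short.

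First I would bound $d_L(x,y)$ from above by applying the triangle inequality twice, inserting first $x'$ and then $y'$:
$$
d_L(x,y) \le d_L(x,x') + d_L(x',y) \le d_L(x,x') + d_L(x',y') + d_L(y',y).
$$
Subtracting $d_L(x',y')$ and then bounding $d_L(x,x') \le d(x,x')$ and $d_L(y',y) \le d(y',y) = d(y,y')$, where the last equality uses symmetry of $d$, this yields
$$
d_L(x,y) - d_L(x',y') \le d(x,x') + d(y,y').
$$
The symmetric argument, interchanging the roles of the pair $(x,y)$ with $(x',y')$, gives the reverse inequality $d_L(x',y') - d_L(x,y) \le d(x,x') + d(y,y')$. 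Together these produce the absolute-value bound, establishing shortness. Since every short map is in particular $1$-Lipschitz and hence (uniformly) continuous, continuity of $d_L$ follows at once.

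There is no genuine obstacle here; the proof is a short exercise in the triangle inequality. The only points that require any care are keeping track of the order of the arguments in the (generally asymmetric) triangle inequality for $d_L$, and observing that the symmetrization $d$ dominates $d_L$ in \emph{both} directions, which is exactly why the maximum, rather than $d_L(a,b)$ alone, appears in the definition of $d$.
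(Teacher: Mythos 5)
Your proof is correct and takes essentially the same approach as the paper: both rest on the triangle inequality for $d_L$ together with the observation that the symmetrized metric $d$ dominates $d_L$ in both directions. The only cosmetic difference is that the paper verifies shortness in each argument separately and then combines, whereas you establish the joint $\ell^1$ bound $|d_L(x,y)-d_L(x',y')| \le d(x,x')+d(y,y')$ in a single chain of inequalities.
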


\begin{proof}
It is sufficient prove shortness in each argument separately. In the first argument,
 \begin{align*}
  | d_{L}(x,z) - d_{L}(y,z) | \le \max \{d_{L}(x,y), d_{L}(y,x) \} = d(x,y),
 \end{align*}
thanks to the triangle inequality for $d_L$. Similarly in the second argument,
 \begin{align*}
  | d_{L}(z,x) - d_{L}(z,y) | \le \max \{d_{L}(x,y), d_{L}(y,x) \} = d(x,y).
 \end{align*}
\end{proof}

\begin{prop}
 Let $\mathcal{X}$ be a LMS. Then the order on $S\mathcal{X}$ is closed. Therefore $S\mathcal{X}\in\cat{OMet}$.
\end{prop}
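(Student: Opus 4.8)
The plan is to unwind the definition of closedness and reduce everything to the continuity of $d_L$ that was just established in the preceding proposition. Recall that the order on $S\mathcal{X}$ is defined by $x\le y$ if and only if $d_L(x,y)=0$, and that closedness of the order means the following: whenever we have sequences $\{x_i\}$ and $\{y_i\}$ in $S\mathcal{X}$ converging in the symmetrized metric $d$ to $x$ and $y$ respectively, and satisfying $x_i\le y_i$ for all $i$, then we must have $x\le y$.

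First I would translate the hypothesis $x_i\le y_i$ into $d_L(x_i,y_i)=0$ for all $i$, and the desired conclusion $x\le y$ into $d_L(x,y)=0$. The convergences $x_i\to x$ and $y_i\to y$ in $d$ imply that the pairs $(x_i,y_i)$ converge to $(x,y)$ in the product metric on $\mathcal{X}\otimes\mathcal{X}$, since the $\ell^1$-sum metric satisfies $d\big((x_i,y_i),(x,y)\big) = d(x_i,x)+d(y_i,y)\to 0$.

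The key step is then to invoke the previous proposition, which tells us that $d_L:\mathcal{X}\otimes\mathcal{X}\to\R$ is short, hence jointly continuous. Applying continuity along the convergent sequence of pairs yields
\begin{equation*}
 d_L(x,y) = \lim_{i\to\infty} d_L(x_i,y_i) = \lim_{i\to\infty} 0 = 0,
\end{equation*}
which is exactly $x\le y$. This establishes that the order is closed, and therefore $S\mathcal{X}\in\cat{OMet}$ as claimed.

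I do not expect any real obstacle here: the whole argument rests on having already proven joint continuity of $d_L$, so the statement is essentially a corollary. If one preferred to avoid invoking continuity explicitly, the same conclusion follows directly from two applications of the triangle inequality for $d_L$, giving $d_L(x,y)\le d_L(x,x_i)+d_L(x_i,y_i)+d_L(y_i,y)$, and then bounding $d_L(x,x_i)\le d(x,x_i)$ and $d_L(y_i,y)\le d(y_i,y)$ (both of which tend to zero) while using $d_L(x_i,y_i)=0$; letting $i\to\infty$ forces $d_L(x,y)\le 0$, hence $d_L(x,y)=0$. Either route is routine, so the only point worth stating carefully is that convergence of the pairs takes place in the product metric, which is what licenses passing to the limit.
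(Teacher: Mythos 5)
Your proposal is correct and follows essentially the same route as the paper: translate the order relation into $d_L(x_i,y_i)=0$, use the joint continuity of $d_L$ established in the preceding proposition to pass to the limit, and conclude $d_L(x,y)=0$, i.e.\ $x\le y$. The triangle-inequality variant you mention at the end is a fine elementary alternative, but the core argument is identical to the paper's proof.
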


\begin{proof}
 Let $\{x_i\}$ and $\{y_i\}$ be sequences in $S\mathcal{X}$ converging to $x$ and $y$, respectively, and such that for all $i$, we have the inequality $x_i\le y_i$. Then, by definition of the induced order, $d_{L}(x_i,y_i)=0$ for all $i$. Since $d_{L}$ is continuous, this implies $d_{L}(x,y)=0$, which in turn gives $x\le y$. Therefore the order is closed.
\end{proof}

\begin{prop}
 Let $\mathcal{X}$ and $\mathcal{Y}$ be LMS. Given a LMS morphism $f:\mathcal{X}\to\mathcal{Y}$, the induced map $Sf:S\mathcal{X}\to S\mathcal{Y}$ (given by the function $f$ on the underlying sets) is short and monotone. 
\end{prop}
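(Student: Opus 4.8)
The plan is to unwind the definition of the functor $S$ together with the single inequality that characterises a LMS morphism, and to verify in turn the three things that the statement packages together: that $Sf$ is well-defined on the quotient, that it is short, and that it is monotone. Since both the metric and the order of $S\mathcal{X}$ are manufactured purely out of the values of $d_L$, each of these will fall out almost immediately from the morphism condition $d_L(f(x),f(x')) \le d_L(x,x')$.

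First I would dispatch well-definedness, which must come before anything else because $S\mathcal{X}$ is a genuine quotient of the underlying set of $\mathcal{X}$. Two points $x,x'$ are identified in $S\mathcal{X}$ exactly when $d_L(x,x') = d_L(x',x) = 0$. Applying the morphism inequality in both directions gives $d_L(f(x),f(x')) \le d_L(x,x') = 0$ and $d_L(f(x'),f(x)) \le d_L(x',x) = 0$, so $f(x)$ and $f(x')$ are identified in $S\mathcal{Y}$; hence the set-map $f$ descends to a well-defined map $Sf$.

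For shortness I would use the symmetrized metric \eqref{symmet} and estimate the maximum argument-by-argument:
\begin{align*}
 d\big( Sf(x), Sf(x') \big) &= \max\{ d_L(f(x),f(x')),\, d_L(f(x'),f(x)) \} \\
 &\le \max\{ d_L(x,x'),\, d_L(x',x) \} = d(x,x'),
\end{align*}
where the middle inequality is the morphism condition applied to each entry together with the monotonicity of $\max$. For monotonicity I would invoke the definition \eqref{sympord} of the induced order: if $x \le y$ in $S\mathcal{X}$ then $d_L(x,y)=0$, so $d_L(f(x),f(y)) \le d_L(x,y) = 0$, which is exactly $Sf(x) \le Sf(y)$ in $S\mathcal{Y}$.

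I do not expect a genuine obstacle. The whole argument is a threefold application of the defining inequality of a LMS morphism, and the only mildly delicate point is that well-definedness on the quotient has to be settled first, since otherwise the remaining claims would not even typecheck as statements about $S\mathcal{X}$ and $S\mathcal{Y}$.
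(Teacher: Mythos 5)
Your proof is correct and follows essentially the same route as the paper's: shortness via the entrywise estimate on the symmetrized maximum $d\big((Sf)(x),(Sf)(x')\big) = \max\{d_L(f(x),f(x')), d_L(f(x'),f(x))\} \le d(x,x')$, and monotonicity via the zero-distance characterization \eqref{sympord} of the induced order. The only difference is that you additionally verify well-definedness of $Sf$ on the quotient, a point the paper's proof passes over silently --- a worthwhile extra check, but not a different argument.
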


\begin{proof}
 Since $f$ is a LMS morphism, we have that for every $x,x'\in\mathcal{X}$,
 \begin{equation*}
  d_{L}(f(x),f(x'))\le d_{L}(x,x').
 \end{equation*}
 Therefore,
 \begin{align*}
  d((Sf)(x),(Sf)(x')) &= \max \{d_{L}(f(x),f(x')), d_{L}(f(x'),f(x)) \} \\ &\le \max \{d_{L}(x,x'), d_{L}(x',x) \} = d(x,x') .
 \end{align*}
 Moreover,
 \begin{align*}
  x\le x' \quad\Rightarrow\quad d_{L}(x,x') = 0 \quad\Rightarrow\quad d_{L}(f(x),f(x')) = 0 \quad\Rightarrow\quad (Sf)(x) \le (Sf)(x') .
 \end{align*}
 Therefore $Sf$ is short and monotone. 
\end{proof}

We write $\cat{LMet}$ the category of LMSs and LMS morphisms between them. We have shown:

\begin{cor}
The assignment $\mathcal{X}\mapsto S\mathcal{X}, f\mapsto Sf$ is a functor $S:\cat{LMet}\mapsto\cat{OMet}$. 
\end{cor}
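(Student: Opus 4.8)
The plan is to verify the two functoriality axioms—preservation of identities and of composition—given that the object and morphism assignments of $S$ have already been shown to land in $\cat{OMet}$ by the preceding propositions (the first showing $S\mathcal{X}\in\cat{OMet}$, the second that each $Sf$ is short and monotone). The guiding observation is that $Sf$ is induced by the \emph{same} underlying set-function $f$, passed to the quotient identifying points at mutual zero Lawvere distance; functoriality will therefore reduce to the trivial functoriality of the assignment sending a set-function to itself.

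First I would record the one genuine subtlety, namely well-definedness on the quotient. The underlying set of $S\mathcal{X}$ is obtained from that of $\mathcal{X}$ by identifying $x$ and $x'$ whenever $d_L(x,x')=d_L(x',x)=0$, so I must check that $Sf$ respects this identification. This is immediate from shortness of $f$: if $d_L(x,x')$ and $d_L(x',x)$ both vanish, then so do $d_L(f(x),f(x'))$ and $d_L(f(x'),f(x))$, whence $f(x)$ and $f(x')$ are identified in $S\mathcal{Y}$. Thus $f$ descends to a genuine map $Sf:S\mathcal{X}\to S\mathcal{Y}$, which the previous proposition shows to be a morphism of $\cat{OMet}$.

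With well-definedness in place, the identity axiom is immediate: $S(\id_{\mathcal{X}})$ is induced by $\id$ on the underlying set, which descends to $\id_{S\mathcal{X}}$. For composition, given LMS morphisms $f:\mathcal{X}\to\mathcal{Y}$ and $g:\mathcal{Y}\to\mathcal{Z}$, I would evaluate both sides on a representative $x$: the maps $S(g\circ f)$ and $Sg\circ Sf$ each send the class of $x$ to the class of $g(f(x))$, so they coincide, giving $S(g\circ f)=Sg\circ Sf$.

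I expect no real obstacle here: the only non-formal content is the quotient well-definedness check, and that is a one-line consequence of the shortness built into the definition of an LMS morphism. Everything else is inherited from the functoriality of the identity-on-underlying-sets assignment, so the corollary is essentially a bookkeeping statement collecting the three preceding propositions.
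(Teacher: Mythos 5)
Your proof is correct and takes essentially the same route as the paper, which simply collects the preceding propositions (that $S\mathcal{X}\in\cat{OMet}$ and that each $Sf$ is short and monotone) and treats identity and composition preservation as immediate since $Sf$ acts as $f$ on underlying points. Your explicit well-definedness check on the quotient is a detail the paper leaves implicit (it is subsumed by the shortness computation, since $d(x,x')=0$ forces $d\big((Sf)(x),(Sf)(x')\big)=0$), but it is the same argument, not a different approach.
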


We have seen how to obtain an ordered metric space from a Lawvere metric space by means of the symmetrization functor $S$. Conversely, given an ordered metric space $X$, one can define a Lawvere metric space in a universal way, by means of the L-distance introduced in Definition~\ref{defLdist}. 
We know by Proposition~\ref{dlproperties} that $d_L$ is indeed a Lawvere metric. Moreover, by Remark~\ref{smL}, any short monotone map $f:X\to Y$ is also short for the L-distance, in the LMS sense. 

\begin{deph}
 Let $X,Y$ be a ordered metric spaces, and $f:X\to Y$ be short and monotone.
 \begin{enumerate}
  \item We denote by $LX$ the Lawvere metric space given by $X$ together with the Lawvere distance $d_L$ of Definition~\ref{defLdist}. 
  \item We denote by $Lf$ the map $LX\to LY$ induced by $f$, which is a LMS morphism by Remark~\ref{smL}.
 \end{enumerate}
\end{deph}

The assignment $X\to LX$, $f\to LF$ is therefore a functor $L:\cat{OMet}\mapsto\cat{LMet}$, which we call the \emph{L-functor}.

\begin{eg}
 Consider $\R$ with its usual metric and order\footnote{So $\R$ is considered ordered \emph{upwards}, differently from $[0,\infty]$ in~\cite{lawvere}.}. Then the induced Lawvere metric ${L}\R$ is given by:
 \begin{equation}
  d_{L}(x,y) = \begin{cases}
                        0 & x\le y  ;\\
                        |x-y| & x\nleq y .
                       \end{cases}
 \end{equation}
 Intuitively, going upwards in the order has zero cost, but going down has cost equal to the usual distance. 
 It is easy to check that $SL\R\cong\R$. So in particular, a map $X\to \R$ is short and monotone if and only if it is short and monotone as a map $X\to L\R$, in the generalized sense given above. This is true for $\R$, but not for all spaces.
\end{eg}

\begin{thm}\label{LadjS}
 The L-functor $L:\cat{OMet}\mapsto\cat{LMet}$ is left adjoint to the symmetrization functor $S:\cat{LMet}\mapsto\cat{OMet}$.
\end{thm}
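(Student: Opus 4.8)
The plan is to prove the adjunction by producing the natural bijection
\[
\cat{LMet}(LX,\mathcal{Y})\;\cong\;\cat{OMet}(X,S\mathcal{Y})
\]
directly at the level of underlying functions: a function $f$ is an LMS morphism $LX\to\mathcal{Y}$ if and only if the function it induces is short and monotone as a map $X\to S\mathcal{Y}$. Throughout I write $d_L$ for the L-distance on $X$ (so $LX=(X,d_L)$) and $\partial$ for the Lawvere metric of $\mathcal{Y}$, with $\bar{d}=\max\{\partial,\partial^{\op}\}$ the symmetrized metric of $S\mathcal{Y}$. The easy half is the implication ``LMS morphism $\Rightarrow$ short monotone''. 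Given $f$ with $\partial(f(x),f(x'))\le d_L(x,x')$ for all $x,x'$, monotonicity follows because $x\le x'$ forces $d_L(x,x')=0$ by Proposition~\ref{dlproperties}(a), hence $\partial(f(x),f(x'))=0$, i.e.\ $f(x)\le f(x')$ in $S\mathcal{Y}$; and shortness follows from $d_L\le d$ (Proposition~\ref{dlproperties}(d)), which gives
\[
\bar{d}\big(f(x),f(x')\big)=\max\{\partial(f(x),f(x')),\partial(f(x'),f(x))\}\le\max\{d_L(x,x'),d_L(x',x)\}\le d(x,x').
\]

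The substance, and what I expect to be the main obstacle, is the converse together with the one tool it requires: a Yoneda-type representation of the Lawvere metric of $\mathcal{Y}$ by morphisms into $L\R$, namely
\[
\partial(y,y')=\sup_{h}\big(h(y)-h(y')\big),
\]
where $h$ ranges over LMS morphisms $\mathcal{Y}\to L\R$. I would obtain the inequality $\ge$ by checking that each corepresentable $h_{y_0}:=-\partial(y_0,-)$ is an LMS morphism into $L\R$: recalling from the Example computing $L\R$ that $d_{L\R}(s,t)=\max\{s-t,0\}$, the triangle inequality for $\partial$ gives $h_{y_0}(y)-h_{y_0}(y')=\partial(y_0,y')-\partial(y_0,y)\le\partial(y,y')$, hence $d_{L\R}(h_{y_0}(y),h_{y_0}(y'))\le\partial(y,y')$. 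The reverse inequality is witnessed by $y_0=y$, for which $h_y(y)-h_y(y')=\partial(y,y')$ exactly.

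With the representation in hand the converse is quick. Given $f:X\to S\mathcal{Y}$ short and monotone and any LMS morphism $h:\mathcal{Y}\to L\R$, the Proposition stating that $S$ carries LMS morphisms to short monotone maps, together with the identification $SL\R\cong\R$ from the Example, shows that $Sh\circ f:X\to\R$ is short and monotone; since $h$ is constant on pairs of points at mutually vanishing $\partial$-distance, it descends to $S\mathcal{Y}$ and $(Sh\circ f)(x)=h(f(x))$. By the definition of the L-distance on $X$ we then get $h(f(x))-h(f(x'))\le d_L(x,x')$, and taking the supremum over $h$ and invoking the representation yields $\partial(f(x),f(x'))\le d_L(x,x')$, i.e.\ $f:LX\to\mathcal{Y}$ is an LMS morphism.

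Finally, naturality in $X$ and $\mathcal{Y}$ is routine, since the correspondence is the identity on underlying functions and both pre- and post-composition act identically on the two sides. The only bookkeeping is the zero-distance identification performed by $S$: as just noted, both an LMS morphism into $\mathcal{Y}$ and the short monotone map into $S\mathcal{Y}$ it induces are insensitive to replacing a value by a point at zero distance from it, so the correspondence is well defined and bijective. Equivalently, one may package it as a unit $\eta_X:X\to SLX$ and counit $\epsilon_{\mathcal{Y}}:LS\mathcal{Y}\to\mathcal{Y}$, each the identity on underlying points (with $\eta_X$ short monotone by the easy half and $\epsilon_{\mathcal{Y}}$ an LMS morphism by the representation formula), whose triangle identities then hold trivially. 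I expect essentially all the difficulty to be concentrated in the representation formula for $\partial$; everything else is formal.
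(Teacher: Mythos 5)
Your proposal is correct and follows essentially the same route as the paper: the paper's proof also hinges on the Yoneda-type representation of the Lawvere metric via LMS morphisms into $L\R$ (its Lemma on $d_L(x,y) = \sup_f (f(x)-f(y))$, proved there with the representable $d_L(-,y)$ rather than your $-\partial(y_0,-)$, an immaterial difference), and then concludes exactly as you do by observing that composites $g \circ f$ of a short monotone map with such a representable are short monotone maps $X \to \R$, hence bounded by the L-distance on $X$ by its very definition. Your treatment of the easy direction and of the zero-distance identification performed by $S$ matches (and is if anything slightly more explicit than) the paper's corresponding remarks surrounding the universal property of $\ell : X \to LX$.
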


The proof is below. We thus have a natural bijection,
\begin{equation}\label{LSadj}
 \cat{LMet}({L}X,\mathcal{Y}) \cong \cat{OMet}(X,S\mathcal{Y}) ,
\end{equation}
between LMS morphisms ${L}X\to\mathcal{Y}$ and short monotone maps $X\to S\mathcal{Y}$.
Now with a slight abuse, let's call a function $f:X\to \mathcal{Y}$:
 \begin{itemize}
  \item \emph{short}, if for every $x,x'\in X$,
  \begin{equation*}
   d_{L}(f(x),f(x')) \le d(x,x') ;
  \end{equation*}
  \item \emph{monotone}, if if for every $x,x'\in X$,
  \begin{equation*}
   x\le x' \quad \Rightarrow \quad d_{L}(f(x),f(x')) = 0 .
  \end{equation*}
 \end{itemize}
Then $f:X\to \mathcal{Y}$ is short and monotone in the sense given above if and only if it is short and monotone as a map $X\to S\mathcal{Y}$. Moreover, given an ordered metric space $X$, the natural map $\ell:X\to LX$ induced by the identity on the underlying sets is short and monotone in the sense given above. 
The bijection~\eqref{LSadj} means that $LX$ and $\ell:X\to {L}X$ satisfy the following universal property: for every LMS $\mathcal{Y}$ and for every short, monotone map $f:X\to \mathcal{Y}$ in the sense given above, there exists a unique LMS morphism ${L}X\to \mathcal{Y}$ making the diagram commute:
 \begin{equation}\label{uniindlms}
  \begin{tikzcd}
   X \ar{d}[swap]{\ell} \ar{dr}{f} \\
   {L}X \uni{r} & \mathcal{Y}
  \end{tikzcd}
 \end{equation}
The space ${L}X$ can thus be interpreted as the largest LMS such that its Lawvere metric is still compatible with the metric of $X$ and with its order.

In order to prove the theorem, we need a useful lemma, which tells us that the Lawvere metric satisfies an equation similar to the L-distance of Definition~\ref{defLdist}:
\begin{lemma}\label{alreadyl}
 Let $\mathcal{X}$ be a LMS. Then for every $x,y\in \mathcal{X}$,
 \begin{equation*}
  d_L(x,y) = \sup_{f:\mathcal{X}\to L\R} \big( f(x) - f(y) \big),
 \end{equation*}
 where the supremum is taken over all LMS morphisms $\mathcal{X}\to L\R$. 
\end{lemma}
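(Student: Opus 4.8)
The plan is to first unwind what an LMS morphism $f:\mathcal{X}\to L\R$ concretely is. Recall from the preceding example that the Lawvere metric on $L\R$ is $d_L(a,b)=\max(a-b,0)$, which vanishes exactly when $a\le b$. Hence the morphism condition $d_L(f(x),f(x'))\le d_L(x,x')$ unwinds to $\max\big(f(x)-f(x'),0\big)\le d_L(x,x')$, and since the right-hand side is nonnegative this is equivalent to the single family of inequalities
\[
 f(x)-f(x')\le d_L(x,x') \qquad \text{for all } x,x'\in\mathcal{X}.
\]
In other words, LMS morphisms $\mathcal{X}\to L\R$ are precisely the functions that are ``forward $1$-Lipschitz'' for $d_L$, in exact analogy with the characterization of short monotone maps $X\to\R$ used in the proof of Theorem~\ref{Ldual}.

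Given this reformulation, the inequality $\sup_f\big(f(x)-f(y)\big)\le d_L(x,y)$ is immediate: it is just the displayed inequality specialized to the pair $(x,y)$, which holds for every morphism $f$, so it survives the supremum.

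For the reverse inequality I would exhibit an explicit potential attaining the bound. The natural candidate is $f_0(z):=-d_L(x,z)$, which takes finite real values by our standing assumption that $d_L$ is finite-valued, so that $f_0$ is genuinely a map into $L\R$. To verify that $f_0$ is a morphism, I compute $f_0(a)-f_0(b)=d_L(x,b)-d_L(x,a)$ and apply the triangle inequality $d_L(x,b)\le d_L(x,a)+d_L(a,b)$, obtaining $f_0(a)-f_0(b)\le d_L(a,b)$. Finally $f_0(x)-f_0(y)=-d_L(x,x)+d_L(x,y)=d_L(x,y)$, so the supremum is in fact \emph{attained} and equals $d_L(x,y)$. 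Combining the two inequalities yields the claim.

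There is essentially no obstacle here; the only subtlety worth flagging is the asymmetry of $d_L$, which forces one to fix the correct argument in the defining potential. The choice $f_0(z)=-d_L(x,z)$ (equivalently $z\mapsto d_L(z,y)$) is the Lawvere-metric analogue of the standard Kantorovich potential $d(\cdot,y)$ used to saturate the duality in the symmetric metric case. Fixing the other argument, e.g.\ $z\mapsto d_L(z,x)$, still produces a valid morphism, but it evaluates to $-d_L(y,x)$ at the pair $(x,y)$ and hence need not attain the bound, so the orientation of the potential is exactly what makes the construction work.
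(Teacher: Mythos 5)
Your proof is correct and takes essentially the same approach as the paper: the easy inequality follows directly from unwinding the morphism condition into $L\R$, and the reverse inequality is obtained by exhibiting a representable potential whose morphism property is exactly the triangle inequality for $d_L$, so that the supremum is attained. The only cosmetic difference is your choice of potential $f_0(z)=-d_L(x,z)$ versus the paper's $f(z)=d_L(z,y)$; both evaluate to $d_L(x,y)$ on the pair $(x,y)$, and your closing remark about the orientation of the potential is accurate.
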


This is essentially the enriched Yoneda lemma, and the proof is correspondingly similar.

\begin{proof}
 First of all, for every LMS morphism $f:\mathcal{X}\to\R$, 
 \begin{align*}
  f(x) - f(y)\quad  \le \quad d_L(f(x),f(y)) \quad \le \quad d_L(x,y).
 \end{align*} 
 Conversely, by the triangle inequality, for every $x,y,z\in \mathcal{X}$ we have
 \begin{align*}
  d_L(x,z) - d_L(y,z) \quad \le \quad d_L(x,y) ,
 \end{align*}
 which means that $d_L(-,z):\mathcal{X}\to\R$ is a LMS morphism for every $z\in \mathcal{X}$. By setting $f(-) := d_L(-,y)$, we then have that 
 \begin{align*}
  d_L(x,y) \quad = \quad d_L(x,y) - d_L(y,y) \quad = \quad f(x) - f(y).
 \end{align*} 
\end{proof}

\begin{proof}[Proof of Theorem~\ref{LadjS}]
 Consider the diagram~\eqref{uniindlms}. There exists a unique \emph{function} ${L}X\to \mathcal{Y}$ making the diagram commute, namely the one which is equal to $f$ on the underlying set. Let's call this map $\tilde{f}$. We have to prove that $\tilde{f}$ is a LMS morphism, i.e.~that for each $x,y\in LX$,
 $$
  d_{L}(\tilde{f}(x),\tilde{f}(y)) \le d_L(x,y),
 $$
 or equivalently that for each $x,y\in X$,
 $$
  d_{L}(f(x),f(y)) \le \sup_{u:X\to\R} \big( u(x) - u(y) \big),
 $$
 with the supremum taken over all short, monotone maps. 
 
 Now using Lemmas~\ref{alreadyl}, we have
 \begin{align*}
  d_{L}(f(x),f(y)) &= \sup_{g:\mathcal{Y}\to L\R} \big( g\circ f(x) - g\circ f(y) \big) .
 \end{align*}
 The map $g\circ f$ is the composite of a short monotone map to $L\R$ (in the generalized sense) and a LMS morphism, so it is a short monotone map $X\to L\R$, or equivalently, a short and monotone map $X\to\R$ in the usual sense. 
 We are then left with
 \begin{align*}
  d_{L}(f(x),f(y)) &= \sup_{g:\mathcal{Y}\to L\R} \big( g\circ f(x) - g\circ f(y) \big) \\
   &\le \sup_{u:X\to L\R} \big( u(x) - u(y) \big) \\
   &= \sup_{u:X\to \R} \big( u(x) - u(y) \big) .
 \end{align*}
 with the supremum taken over all short monotone maps.
\end{proof}

Therefore, as we have seen, the L-distance of Definition~\ref{defLdist} can be interpreted as the universal Lawvere metric induced by the ordered metric space structure. L-ordered spaces, then, are the spaces for which this induced Lawvere metric is enough to determine the order.

\paragraph{Acknowledgements.} We thank Rostislav Matveev and Sharwin Rezagholi for the helpful discussions. We also want to thank the anonymous reviewer for the helpful questions and comments.

\cleardoublepage
\bibliographystyle{alpha}
\bibliography{catprob}
\addcontentsline{toc}{section}{\bibname}

\end{document}